\numberwithin{equation}{section}
\newtheorem{Proposition}[equation]{Proposition}
\newtheorem{Lemma}[equation]{Lemma}
\newtheorem{Theorem}[equation]{Theorem}
\newtheorem{Corollary}[equation]{Corollary}
\theoremstyle{definition}  
\newtheorem{Definition}[equation]{Definition}
\newtheorem{Conjecture}[equation]{Conjecture}
\newcommand\Comment[2][\relax]{\space\par\medskip\noindent%
\fbox{\begin{minipage}{\textwidth}\textbf{Comment\ifx\relax#1\else---#1\fi}\newline%
		#2\end{minipage}}\medskip
}
\def\balpha{\text{\boldmath$\alpha$}}
\def\bbeta{\text{\boldmath$\beta$}}
\def\bl{\text{\boldmath$l$}}
\def\bs{\text{\boldmath$s$}}
\def\bt{\text{\boldmath$t$}}
\def\bc{\text{\boldmath$c$}}
\def\br{\text{\boldmath$r$}}
\def\b1{\text{\boldmath$1$}}
\def\ba{\text{\boldmath$a$}}
\def\bb{\text{\boldmath$b$}}
\def\bw{\text{\boldmath$w$}}
\def\bu{\text{\boldmath$u$}}
\def\bv{\text{\boldmath$v$}}
\def\bx{\text{\boldmath$x$}}
\def\balpha{\text{\boldmath$\alpha$}}
\def\bbeta{\text{\boldmath$\beta$}}
\def\biota{\text{\boldmath$\iota$}}
\def\coproduct{\reflectbox{\rotatebox[origin=c]{180}{${\tt\Delta}$}}}
\def\fa{\mathfrak{a}}
\def\a{\mathfrak{a}}
\def\c{\mathfrak{c}}
\def\pmod#1{\text{ }(\text{\rm mod } #1)\,}
\newcommand{\Hom}{\operatorname{Hom}}
\newcommand{\End}{\operatorname{End}}
\newcommand{\id}{\operatorname{id}}
\newcommand{\Stab}{\operatorname{Stab}}
\newcommand{\Z}{\mathbb{Z}}
\newcommand{\F}{\mathbb{F}}
\newcommand{\0}{{\bar 0}}
\renewcommand{\1}{{\bar 1}}
\def\eps{{\varepsilon}}
\def\phi{{\varphi}}
\newcommand{\zX}{{\mathsf{X}}}
\newcommand{\zY}{{\mathsf{Y}}}
\newcommand{\zB}{{\mathsf{B}}}
\newcommand{\zL}{{\mathsf{L}}}
\newcommand{\zb}{{\mathsf{b}}}
\newcommand{\zc}{{\mathsf{c}}}
\newcommand{\zv}{{\mathsf{v}}}
\newcommand{\ze}{{\mathsf{e}}}
\newcommand{\za}{{\mathsf{a}}}
\newcommand{\zw}{{\mathsf{w}}}
\newcommand{\Ro}{\operatorname{Row}}
\newcommand{\Co}{\operatorname{Col}}
\newcommand{\Ga}{\Gamma}
\newcommand{\la}{\lambda}
\newcommand{\La}{\Lambda}
\newcommand{\al}{\alpha}
\def\Si{\mathfrak{S}}
\newcommand{\si}{\sigma}
\newcommand{\Om}{\Omega}
\newcommand{\de}{\delta}
\newcommand{\De}{\Delta}
\newcommand{\ka}{\kappa}
\def\id{\mathop{\mathrm {id}}\nolimits}
\renewcommand{\mod}{\bmod \,}
\newcommand{\Zig}{{\bar {\mathsf Z}}}
\newcommand{\EZig}{{\mathsf Z}}
\renewcommand{\Alph}{{\mathscr A}}
\def\col{{\operatorname{color}}}
\newcommand{\Std}{\operatorname{Std}}
\def\Brackets#1{[ #1 ]}
\def\b{\mathfrak{b}}
\def\k{\Bbbk}
\def\T{\text{\boldmath$T$}}
\def\Stab{\text{\boldmath$S$}}
\def\spa{\operatorname{span}}
\def\op{{\mathrm{op}}}
\def\sop{{\mathrm{sop}}}
\def\onto{{\twoheadrightarrow}}
\def\into{{\hookrightarrow}}
\def\mod#1{#1\!\operatorname{-mod}}
\def\iso{\stackrel{\sim}{\longrightarrow}}
\def\ch{\operatorname{ch}}
\def\lan{\langle}
\def\ran{\rangle}
\def\Seq{\operatorname{Tri}}
\def\Se{\operatorname{Seq}}
\newcommand{\rev}{\overleftarrow}
\def\bla{\text{\boldmath$\lambda$}}
\def\bmu{\text{\boldmath$\mu$}}
\def\bnu{\text{\boldmath$\nu$}}
\gdef\set#1{\mathinner{\lbrace\,{\mathcode`\|"8000%
			\let|\midvert #1}\,\rbrace}}
\def\midvert{\egroup\mid\bgroup}
\colorlet{darkgreen}{green!50!black}
\tikzset{dots/.style={very thick,loosely dotted},
greendot/.style={fill,circle,color=darkgreen,inner sep=1.5pt,outer sep=0},
blackdot/.style={fill,circle,color=black,inner sep=1.5pt,outer sep=0},
graydot/.style={fill,circle,color=gray,inner sep=1.1pt,outer sep=0}
}
\def\greendot(#1,#2){\node[greendot] at(#1,#2){}}
\def\blackdot(#1,#2){\node[blackdot] at(#1,#2){}}
\def\graydot(#1,#2){\node[graydot] at(#1,#2){}}
\newenvironment{braid}{
\begin{tikzpicture}[baseline=6mm,black,line width=1pt, scale=0.32,
	draw/.append style={rounded corners},
	every node/.append style={font=\fontsize{5}{5}\selectfont}]%
}{\end{tikzpicture}
}
\def\Grid(#1,#2){
\draw[very thin,gray,step=2mm] (0,0)grid(#1,#2);
\draw[very thin,darkgreen,step=10mm] (0,0)grid(#1,#2);
}
\newcommand\Tableau[2][\relax]{
\begin{tikzpicture}[scale=0.5,draw/.append style={thick,black}]
	\ifx\relax#1\relax%
	\else 
	\foreach\box in {#1} { \filldraw[blue!30]\box+(-.5,-.5)rectangle++(.5,.5); }
	\fi
	\newcount\row\newcount\col
	\row=0
	\foreach \Row in {#2} {
		\col=1
		\foreach\k in \Row {
			\draw(\the\col,\the\row)+(-.5,-.5)rectangle++(.5,.5);
			\draw(\the\col,\the\row)node{\k};
			\global\advance\col by 1
		}
		\global\advance\row by -1
	}
\end{tikzpicture}
}
\newcommand\YoungDiagram[2][\relax]{
\begin{tikzpicture}[scale=0.5,draw/.append style={thick,black}]
	\ifx\relax#1\relax%
	\else 
	\foreach\box in {#1} {
		\filldraw[blue!30]\box rectangle ++(1,1);
	}
	\fi
	\newcount\row
	\row=0
	\foreach \col in {#2} {
		\draw(1,\the\row)grid ++(\col,1);
		\global\advance\row by -1
	}
\end{tikzpicture}
}
\newcommand{\fc}{\mathfrak{c}}
\newcommand{\Tilt}{\operatorname{T}}
\newcommand{\Tiltz}{\mathsf T}
\newcommand{\scrT}{{\mathscr T}}
\begin{document}

\title[Ringel Duality]{{\bf Ringel Duality for Extended ZigZag Schur Algebra}}

\author{\sc Alexander Kleshchev}
\address{Department of Mathematics\\ University of Oregon\\
	Eugene\\ OR 97403, USA}
\email{klesh@uoregon.edu}

\author{\sc Ilan Weinschelbaum}
\address{Department of Mathematics\\ University of Oregon\\
	Eugene\\ OR 97403, USA}
\email{ilanw@uoregon.edu}



\begin{abstract}
	Extended zigzag Schur algebras are quasi-hereditary algebras which are conjecturally Morita equivalent to RoCK blocks of classical Schur algebras. We prove that  extended zigzag Schur algebras are Ringel self-dual. 
\end{abstract}

\maketitle

\section{Introduction}

Let $S(m,r)$ be a classical Schur algebra over the ground field $\F$ of characteristic $p>0$, see \cite{Green}. A fundamental fact going back to \cite{GreenComb,CPS} is that the algebra $S(m,r)$ is (based) quasi-hereditary. The blocks of $S(m,r)$ are classified in \cite[(2.12)]{Do2}, \cite{Do4}. 

From now on let us assume for simplicity that $r\leq m$. Then the  blocks of $S(m,r)$ and of the symmetric group algebra $\F\Si_r$ are parametrized by pairs $(\rho,d)$ where $\rho$ is a $p$-core and $d$ is a non-negative integer 
such that $|\rho|+pd=r$. Let $B_{\rho,d}$ denote the corresponding block of $S(m,r)$ and $\bar B_{\rho,d}$ denote the corresponding block of $\F\Si_r$. 

A special role in representation theory of $\F\Si_r$ is played by the so-called {\em RoCK blocks} going back to \cite{RoTh,CK}. These are the blocks $\bar B_{\rho,d}$ with $\rho$ satisfying certain combinatorial genericity condition with respect to $d$. The corresponding block $B_{\rho,d}$ of the Schur algebra $S(m,r)$ is then also called {\em RoCK}. The RoCK blocks of symmetric groups are important because they admit a nice `local description.'  Namely, by \cite{EK2}, we have that $\bar B_{\rho,d}$ is Morita equivalent to the zigzag Schur algebra $T^{\Zig}(d,d)$ defined by Turner \cite{T}, see also \cite{EK1}. In view of \cite{CR}, this yields a `local description' of all blocks of symmetric groups up to derived equivalence. 

On the other hand, an arbitrary RoCK block $B_{\rho,d}$ of $S(m,r)$ is {\em conjecturally}\, Morita equivalent to the {\em extended zigzag Schur algebra}\, $T^\EZig(d,d)$, see \cite[Conjecture 7.60]{KMgreen3}. As a first evidence for this conjecture, it is proved in \cite[Theorem 1]{KMgreen3} that $T^\EZig(d,d)$ is quasi-hereditary. In this paper we obtain further evidence for this conjecture in terms of Ringel duality. 

In fact, Donkin \cite[(3.7),(3.11)]{DonkinTilt}, \cite[\S5(2)]{Do4}, \cite[\S4.1]{DonkinQS} proves that $S(m,r)$ is Ringel self-dual. 
It follows from the results of Donkin that the Ringel dual of the block $B_{\rho,d}$ is the block $B_{\rho',d}$. On the other hand, if 
$B_{\rho,d}$ is RoCK then so is $B_{\rho',d}$, so we expect that the extended zigzag Schur algebra $T^\EZig(d,d)$ must be Ringel self-dual. This is what we prove in this paper:

\vspace{2mm}
\noindent
{\bf Main Theorem.}
{\em 
	Let $d\leq n$. Then the extended zigzag Schur algebra $T^\EZig(n,d)$ is Ringel self-dual.		
}
\vspace{2mm}

Here $\EZig$ stands for the extended zigzag algebra corresponding to the quiver with $p$ vertices, see \S\ref{SSZig}. 
In fact, $T^\EZig(n,d)$ is a special case of the generalized Schur algebras $T^A_\a(n,d)$ introduced in \cite{KMgreen2}, with $A=\EZig$ and $\a$ being the degree zero component of the graded algebra $\EZig$. It is well known that $\EZig$ is quasi-hereditary and Ringel self-dual. So our Main Theorem is a special case of the following conjecture.

\vspace{2mm}
\noindent
{\bf Conjecture.}
{\em 
	Let $A$ be a based quasi-hereditary algebra and $d \leq n$. If $A'$ is a Ringel dual of $A$, then a Ringel dual of $T^A_\a(n,d)$ is of the form $T^{A'}_{\a'}(n,d)$ for some canonical choice of~$\a'$.
}
\vspace{2mm}

The paper is organized as follows. Section~\ref{SPrelim} is preliminaries.
In particular, \S\ref{SSBQHA} details necessary facts on based quasi-hereditary algebras and \S\ref{SSComb} is on the combinatorics of partitions and tableaux. 
Section~\ref{SSA} describes the construction of $T^A(n,d)$ and important results about its (co)standard modules.
In Section~\ref{SDiv}, we define the modified divided power which we  will  use extensively to construct a full tilting module for $T^{\EZig}(n,d)$.
Lastly, in Section~\ref{SZig} prove the Main Theorem. In particular, in \S\ref{TZFull} we describe a full tilting module for $T^\EZig(n,d)$; and in \S\ref{TZSelf} we compute its endomorphism algebra.

\section{Preliminaries}

\label{SPrelim}
\subsection{General notation}
\label{SGeneralNot}
For $n\in\Z_{>0}$, we denote $[n]:=\{1,2,\dots,n\}$.
Throughout the paper, $I$ denotes a non-empty finite partially ordered set. We always identify $I$ with the set $\{0,1,\dots,\ell\}$ for $\ell=|I|-1$, so that the standard total order on integers refines the partial order on $I$. For a set $S$, we often write elements of $S^d$ as words $\bs=s_1\cdots s_d$ with $s_1,\dots,s_d\in S$. 

The symmetric group $\Si_d$ acts on the right on $S^d$ by place permutations:
$$
(s_1\cdots s_d)\si=s_{\si 1}\cdots s_{\si d}.
$$
For $\bs=s_1\cdots s_d\in S^d$ we have the stabilizer 
$
\Si_\bs:=\{\si\in\Si_d\mid \bs\si=\bs\}.
$ 
For $\bs,\bt\in S^d$, we write $\bs\sim\bt$ if $\bs\si=\bt$ for some $\si\in \Si_d$. If $S_1,\dots,S_m$ are sets, then $\Si_d$ acts on $S_1^d\times\dots\times S_m^d$ diagonally. We write 
$(\bs_1,\dots,\bs_m)\sim (\bt_1,\dots,\bt_m)$ 
if 
$(\bs_1,\dots,\bs_m)\si=(\bt_1,\dots,\bt_m)$ for some $\si\in \Si_d$. 
If $U\subseteq S_1^d\times\dots\times S_m^d$ is a $\Si_d$-invariant subset, we denote by $U/\Si_d$ a complete set of the $\Si_d$-orbit representatives in $U$ and we identify $U/\Si_d$ with the set of $\Si_d$-orbits on $U$.

An (arbitrary) ground field is denoted by $\F$. 
Often we will also need to work over a characteristic $0$ principal ideal domain $R$ such that $\F$ is a $R$-module, so that we can change scalars from $R$ to $\F$ (in all examples of interest to us, one can use $R=\Z$). When considering $R$-supermodules (or $\F$-superspaces) below, we always consider $R$ (and $\F$) as concentrated in degree $\0$.

We use $\k$ to denote $\F$ or $R$ and use it whenever the nature of the ground ring is not important. On the other hand, when it is important to emphasize whether we are working over $R$ or $\F$, we will use lower indices; for example for an $R$-algebra $A_R$ and an $A_R$-module $V_R$, after extending scalars we have $V_\F:=\F\otimes_RV_R$ is a module over $A_\F:=\F\otimes_RA_R$. 

\subsection{Superalgebras and supermodules}
\label{SSSpaces}
Let $V=\bigoplus_{\eps\in\Z/2}V_\eps$ be a $\k$-supermodule. 
If $v\in V_\eps\setminus\{0\}$ for $\eps\in\Z/2$, we say $v$ is {\em homogeneous}, we write $|v|=\eps$, and we refer to $\eps$ as the {\em parity} of $v$. 
If $S\subseteq V$, we denote $S_\0:=S\cap V_\0$ and $S_\1:=S\cap V_\1$. 
If $W$ is another $\k$-supermodule, the set of all $\k$-linear homomorphisms $\Hom_\k(V,W)$ is a $\k$-supermodule such that for $\eps\in\Z/2$, we have 
$$
\Hom_\k(V,W)_\eps=\{f\in\Hom_\k(V,W)\mid |f(v)|=|v|+\eps\ \text{for all homogeneous $v$}\}.
$$ 

The group $\Si_d$ acts on $V^{\otimes d}$ on the right by   automorphisms, such that for all homogeneous $v_1,\dots,v_d\in V$ and $\si\in \Si_d$, we have 
\begin{equation}\label{ESiAct}
	(v_1\otimes\dots\otimes v_{d})^\si=
	(-1)^{\lan\si;\bv\ran} v_{\si1}\otimes\dots\otimes v_{\si d},
\end{equation}
where, setting $\bv:=v_1\cdots v_d\in V^d$, we have put:
\begin{equation}\label{EAngleSi}
	\lan\si;\bv\ran:=\sharp\{(k,l)\in[d]^2\mid k<l,\  \si^{-1}k>\si^{-1}l,\ |v_k|=|v_l|=\1\}.
\end{equation}
We consider the {\em $d$th divided power} $\Ga^dV$, which by definition is the subspace of invariants 
\begin{equation}\label{EGa}
	\Ga^dV:=\{w\in V^{\otimes d}\mid w^\si=w\ \text{for all $\si\in\Si_d$}\}.
\end{equation}
Let $0\leq c\leq d$. 
Given $w_1\in V^{\otimes c}$ and $w_2\in V^{\otimes (d-c)}$, we define 
\begin{equation}\label{EStarNotationGen}
	w_1* w_2:=\sum_{\si}(w_1\otimes w_2)^\si\in V^{\otimes d},
\end{equation}
where the sum is over all shortest coset representatives $\si$ for $(\Si_c\times \Si_{d-c})\backslash\Si_{d}$.

Let $V$ and $W$ be $\k$-supermodules, $d\in\Z_{\geq 0}$, and let $\bv=v_1\cdots v_d\in V^d$, $\bw=w_1\cdots w_d\in W^d$ be $d$-tuples of homogeneous elements. We denote 
\begin{equation} 
	\label{EAngle2}\lan \bv, \bw\ran:=\sharp\{(k,l)\in[d]^2\mid k>l,\  |v_k|=|w_l|=\1\}.
\end{equation}

Let $A$ be a (unital) $\k$-superalgebra and $V,W$ be $A$-supermodules. 
A {\em homogeneous  $A$-supermodule homomorphism} $f:V\to W$ is a homogeneous $\k$-linear map $f:V\to W$ satisfying $f(av)=(-1)^{|f||a|}af(v)$ for all (homogeneous) $a,v$. For $\eps\in\Z/2$, let $\Hom_A(V,W)_\eps$ be the set of all homogeneous $A$-supermodule homomorphisms of parity $\eps$, and let 
$$\Hom_A(V,W):=\Hom_A(V,W)_\0\oplus \Hom_A(V,W)_\1.$$ We refer to the elements of $\Hom_A(V,W)$ as the 
$A$-supermodule homomorphisms from $V$ to $W$. 
We denote by $\mod{A}$ the category of all finitely generated (left) $A$-supermodules and all $A$-supermodule homomorphisms. We denote by `$\cong$' an isomorphism in this category and by `$\simeq$' an {\em even isomorphism} in this category.

We have the parity change functor $\Pi$ on $\mod{A}$: 
for $V\in\mod{A}$ we have $\Pi V\in\mod{A}$ with $(\Pi V)_\eps=V_{\eps+\1}$ for all $\eps\in \Z/2$ and the new action 
$a\cdot v=(-1)^{|a|}av$ for $a\in A, v\in V$. We have $V\cong \Pi V$ via the identity map. 

Suppose there is an even superalgebra anti-involution $\tau : A \to A$. In particular, 
$\tau (ab) = (-1)^{|a||b|}\tau (b) \tau (a)$  
for all $a,b\in A$. Then $\tau$ is an isomorphism $A\to A^{\sop}$, where the multiplication in $A^{\sop}$ is defined as $a\cdot b:=(-1)^{|a||b|}ba$.  
If $V\in \mod{A}$ then the $\tau$-dual $V^\tau\in\mod{A}$ is the dual $V^*$ as a $\k$-supermodule considered as a left $A$-supermodule via $(a f)(v):=(-1)^{|a||f|}f(\tau(a)v)$ for $a\in A,f\in V^*, v\in V$. 

As usual, the tensor product $A^{\otimes d}$ is a superalgebra with respect to 
$$
(a_1\otimes\dots\otimes a_d)(b_1\otimes\dots b_d)=(-1)^{\lan\ba,\bb\ran}a_1b_1\otimes\dots\otimes a_db_d,
$$
where we have put $\ba:=a_1\cdots a_d$, $\bb:= b_1\cdots b_d$ (here and below, in expressions like this, we assume that all elements are homogeneous). If $V$ is an $A$-supermodule then $V^{\otimes d}$ is a supermodule over $A^{\otimes d}$ with respect to 
$$
(a_1\otimes\dots\otimes a_d)(v_1\otimes\dots v_d)=(-1)^{\lan\ba,\bv\ran}a_1v_1\otimes\dots\otimes a_dv_d,
$$
where we have again put $\ba:=a_1\cdots a_d$, $\bv:= v_1\cdots v_d$. 

The divided power $\Ga^dA$ is a subsuperalgebra of $A^{\otimes d}$. If $V$ is an $A$-supermodule then 
$$\big((a_1\otimes\dots\otimes a_d)(v_1\otimes\dots v_d)\big)^\si=(a_1\otimes\dots\otimes a_d)^\si (v_1\otimes\dots\otimes v_d)^\si$$ 
for all $a_1,\dots,a_d\in A,\,v_1,\dots,v_d\in V,\,\si\in\Si_d$. So $\Ga^dV$ is a subsupermodule of the restriction of $V^{\otimes d}$ to $\Ga^dA$. Thus we will always consider $\Ga^dV$ as a   $\Ga^dA$-supermodule.



\subsection{Based quasi-hereditary algebras}
\label{SSBQHA}
The main reference here is \cite{KMgreen1}. 
Let $A$ be a $\k$-superalgebra.

\begin{Definition} \label{DCC} {\rm \cite{KMgreen1}} 
	{\rm 
		Let $I$ be a finite partially ordered set and let $X=\bigsqcup_{i\in I}X(i)$ and $Y=\bigsqcup_{i\in I}Y(i)$ be finite sets of homogeneous elements of $A$ with distinguished elements $e_i\in X(i)\cap  Y(i)$ for each $i\in I$. 
		For each $i\in I$, we set 
$$
A^{>i}:=\spa\{xy\mid j>i,\, x\in X(j),\, y\in Y(j)\}.
$$ 
We say that $I,X,Y$ is  {\em heredity data} if the following axioms hold: 
		\begin{enumerate}
			\item[{\rm (a)}] $B:=\{x y \mid i\in I,\, x\in X(i),\, y\in Y(i)\}$ is a basis of $A$; 
			
			\item[{\rm (b)}] For all $i\in I$, $x\in X(i)$, $y\in Y(i)$ and $a\in A$, we have
			$$
			a x \equiv \sum_{x'\in X(i)}l^x_{x'}(a)x' \pmod{A^{>i}}
			\ \ \text{and}\ \ 
			ya \equiv \sum_{y'\in Y(i)}r^y_{y'}(a)y' \pmod{A^{>i}}
			$$
			for some $l^x_{x'}(a),r^y_{y'}(a)\in\k$;

			\item[{\rm (c)}] For all $i,j\in I$ and $x\in X(i),\ y\in Y(i)$ we have  
			\begin{align*}
				&xe_i= x,\ e_ix= \de_{x,e_i}x,\ e_i y= y,\ ye_i= \de_{y,e_i}y, 
				\\
				&e_jx=x\ \text{or}\ 0,\ ye_j=y\ \text{or}\ 0. 
			\end{align*}
		\end{enumerate}
			}
\end{Definition}

\vspace{2mm}
If $A$ is endowed with heredity data $I,X,Y$, we call $A$ {\em based quasi-hereditary}, and refer to $B$ as a {\em heredity basis} of $A$. By (c), $e_i^2 = e_i$ for all $i \in I$, so from now on we call $\{e_i \mid i \in I \}$ the {\em standard idempotents} of the heredity data. We set
\begin{align}
B_\a&:=\{xy\mid i\in I, x\in X(i)_\0,y\in Y(i)_\0\}, \
\label{EBA}
\\
B_\c&:=\{xy\mid i\in I, x\in X(i)_\1,y\in Y(i)_\1\},
\label{EBC}
\end{align}
so that 
\begin{equation}\label{EBABC}
	B_\0=B_\a\sqcup B_\c.
\end{equation} 
The heredity data $I,X,Y$ of $A$ is called {\em conforming} if $B_\a$ spans a unital subalgebra of $A$.

Let $A$ be a  based quasi-hereditary superalgebra with heredity data $I,X,Y$ (not necessarily conforming). By \cite[Lemma 3.3]{KMgreen1}, $A$ is quasi-hereditary in the sense of Cline, Parshall and Scott and $\mod{A}$ is a highest weight category (see \cite[Theorem 3.6]{CPS}).
The corresponding standard and costandard modules are defined as follows. Let $i \in I$. Note that $A^{>i}$ the ideal of $A$ generated by $\{ e_j \mid j >i \}$, and denote 
$\tilde A:=A/A^{>i}$, $\tilde a:=a+A^{>i}\in \tilde A$ for $a\in A$.  
The {\em standard $A$-module of highest weight $i$} is defined by $\De(i):=\tilde A \tilde e_i$, which is a free $\k$-module with basis
$\{v_x := \tilde x\mid x\in X(i) \}$, see \cite[\S 2.3]{KMgreen1}.
We also have the {\em right standard} $A$-module $\De^\op(i):= \tilde e_i \tilde A$, and by symmetry every result we have about $\De(i)$ has its right analogue for $\De^\op(i)$, for example $\De^\op(i)$ is a free $\k$-module with basis $\{w_y:=\tilde y\mid y\in Y(i)\}$. 
The {\em costandard $A$-module of highest weight $i$} is defined  by $\nabla(i) := \De^\op(i)^*$ 
with $(a f) (v) = (-1)^{|a||f|+|a||v|} (v a)$ for $a \in A, f\in \De(i)^*, v \in \De(i)$.

Let $V\in\mod{A}$. A {\em standard filtration} 
of $V$ is an $A$-supermodule filtration 
$0=W_0\subseteq W_1\subseteq \dots\subseteq W_l=V$ 
such that for every $r=1,\dots,l$, we have $W_r/W_{r-1}\cong \De(i_r)$ 
for some $i_r\in I$. 
We refer to $\De(i_1),\dots, \De(i_l)$ as the factors of the filtration, and to $\De(i_1)$ (resp. $\De(i_l)$) as the bottom (resp. top) factor.
If $\k = \F$, $i\in I$ and $V$ has a standard filtration as above,  then $\sharp\{1\leq r\leq l\mid W_r/W_{r-1}\cong \De(i)\}$ does not depend on the choice of the standard filtration and is denoted $(V:\De(i))$. In fact, by \cite[Proposition A2.2]{DonkinQS}, we have
\begin{equation}\label{EDeMult}
	(V:\De(i))=\dim\Hom_A(V,\nabla(i)).
\end{equation}
A {\em costandard filtration} \
of $V$ is an $A$-supermodule filtration 
$0=W_0\subseteq W_1\subseteq \dots\subseteq W_l=V$ such that for every  $r=1,\dots,l$, we have $W_r/W_{r-1} \cong \nabla(i_r)$ for some $i_r\in I$.

Let $\Tilt\in\mod{A}$. We say that $\Tilt$ is a {\em tilting supermodule} if it has standard and costandard filtrations. We refer to \cite[\S4]{Rouquier} for the integral version of the tilting theory. 
In particular, by \cite[Propositions 4.26, 4.27]{Rouquier}, for every $i\in I$ there exists a (unique up to isomorphism) indecomposable tilting supermodule $\Tilt(i)$ such that $\De(i)\subseteq \Tilt(i)$ and $\Tilt(i)/\De(i)$ has a standard filtration with factors of the form 
$\De(j)$ for $j< i$; moreover, for every tilting supermodule $\Tilt$ we have   
$
\Tilt
\cong \bigoplus_{i\in I}\Tilt(i)^{\oplus m_i}.
$ 
In this case $\Tilt$ is called a {\em full tilting} supermodule if $m_i>0$ for all $i\in I$. If $\Tilt$ is full tilting, the superalgebra
$
A':=\End_{A}(\Tilt)^\sop
$
is called a {\em Ringel dual} of $A$. 
The algebra $A'$ is defined uniquely up to Morita superequivalence and is quasi-hereditary, see \cite[Proposition 4.26]{Rouquier}. 

\subsection{Multipartitions and tableaux}\label{SSComb}
For a partition $\la$, we have the conjugate partition $\la'$, see \cite[p.2]{Mac}. 
For partitions $\la,\mu,\nu$, we denote by $c^{\,\la}_{\mu,\nu}$ the corresponding Littlewood-Richardson coefficient, see \cite[\S\,I.9]{Mac}. The Young diagram of $\la$ is $[\la]:=\{(r,s)\in \Z_{>0}\times\Z_{>0} \mid s\leq \la_r\}$. We refer to $(r,s)\in[\la]$ as the {\em nodes} of $\la$. 

Let $n\in \Z_{>0}$. We denote $\La(n):=\Z_{\geq 0}^n$ and interpret it as the set of {\em compositions} $\la=(\la_1,\dots,\la_n)$ with $n$ non-negative parts. 
For such $\la$, we set $|\la|:=\la_1+\dots+\la_n$. The partitions with at most $n$ parts are identified with 
$$\La_+(n):=\{\la=(\la_1,\dots,\la_n)\in\La(n)\mid\la_1\geq\dots\geq\la_n\}.$$ 
For $d\in\Z_{\geq 0}$, we let 
$$
\La(n,d):=\{\la\in\La(n)\mid|\la|=d\}\qquad\text{and}\qquad
\La_+(n,d):=\{\la\in\La_+(n)\mid|\la|=d\}.
$$ 
For $\la,\mu\in\La(n)$, we define $\la+\mu:=(\la_1+\mu_1,\dots,\la_n+\mu_n)$.

Recall that $I$ denotes a finite poset. 
We will consider the set of $I$-{\em multicompositions}
\begin{equation}\label{ELaX}
	\La^I(n):=\La(n)^I=\{\bla=(\la^{(i)})_{i\in I}\mid \la^{(i)}\in\La(n)\ \text{for all $i\in I$}\}.
\end{equation} 
For \(\bla,\bmu\in\La^I(n)\) we define \(\bla+\bmu\) to be $\bnu\in\La^I(n)$ with \(\nu^{(i)} = \la^{(i)} + \mu^{(i)}\) for all \(i \in I\). For $d\in\Z_{\geq 0}$, we have the sets of  $I$-multicompositions and $I$-multipartitions of $d$: 
\begin{align*}
\La^I(n,d)&:=\{\bla\in\La^I(n)\mid \sum_{i\in I}|\la^{(i)}|=d\},
\\
\La_+^I(n,d)&:=\{\bla\in\La^I(n,d)\mid \la^{(i)}\in\La_+(n)\ \text{for all $i\in I$}\}.
\end{align*}
For $\bla\in\La^I_+(n,d)$, 
we define $[\bla]:=\bigsqcup_{i\in I}[\la^{(i)}]$ and $\|\bla\|:=(|\la^{(i)}|)_{i\in I}\in\Z_{\geq 0}^I$. 

Via our identification $I=\{0,\dots,\ell\}$, for $\bla=(\la^{(i)})_{i\in I}\in\La^I(n)$, we also write  $\bla=(\la^{(0)},\dots,\la^{(\ell)})$. 
For $i\in I$, and $\la\in\La(n,d)$, define 
\begin{equation}\label{EIota}
	\biota_i(\la):=(0,\dots,0,\la,0,\dots,0)\in\La^I(n,d),
\end{equation}
with $\la$ in the $i$th position. 
We will slightly abuse this notation writing $\biota_i(d)$ for $\biota_i\big((d,0,\dots,0)\big)$ and $\biota_i(1^d)$ for $\biota_i\big((1,\dots,1,0,\dots,0)\big)$.

Let $\leq$ be the partial order on $I$. We have a partial order $\unlhd_I$ on the set $\Z_{\geq 0}^I$ with $(a_i)_{i\in I}\unlhd_I(b_i)_{i\in I}$ if and only if $\sum_{j\geq i}a_j\leq \sum_{j\geq i}b_j$ for all $i\in I$. Let $\unlhd$ be the usual {\em dominance partial order} on $\La(n,d)$, i.e. 
$
\la\unlhd \mu$ if and only if $\sum_{r=1}^s\la_r\leq \sum_{r=1}^s\mu_r$ for all $s=1,\dots,n$.
We define a partial order $\leq_I$ on $\La^I(n,d)$ via:
$\bla\leq_I \bmu$ whenever $\|\bla\|\lhd_I\|\bmu\|$, 
or $\|\bla\|=\|\bmu\|$ and $\la^{(i)}\unlhd \mu^{(i)}$ for all $i\in I$. 

Let $I,X,Y$ be a heredity data on a $\k$-superalgebra $A$ as in \S\ref{SSBQHA}. We introduce {\em colored alphabets}   
$
\Alph_{X}:=[n]\times X \quad\text{and}\quad  \Alph_{X(i)}:=[n]\times X(i).
$
An element $(l,x)\in \Alph_{X}$ is often written as $l^x$. 
If  $L=l^x\in \Alph_{X}$, we denote $\col(L):=x.$
For all $i\in I$, we fix arbitrary total orders `$<$' on the sets $\Alph_{X(i)}$ such that whenever $r < s$ (in the standard order on $[n]$), $r^x < s^x$ for all $x \in X(i)$.

Let $\bla=(\la^{(0)},\dots,\la^{(l)})\in\La^I_+(n,d)$. 
Let $N_1,\dots,N_d$ be the nodes of $[\bla]=[\la^{(0)}]\sqcup\dots\sqcup [\la^{(\ell)}]$ listed along the rows of $[\la^{(0)}]$ from left to right starting from the first row and going down, then along the rows of $[\la^{(1)}]$ from left to right starting with the first row and going down, etc.  A function $\T:\Brackets\bla\to \Alph_{X}$ is called a {\em standard $X$-colored $\bla$-tableau} if the following conditions hold:
\begin{enumerate}
\item[$\bullet$] $\T(\Brackets{\la^{(i)}})\subseteq \Alph_{X(i)}$ for all $i\in I$;
\item[$\bullet$] if $r<s$ and $N_r,N_s$ are in the same row of $\Brackets{\la^{(i)}}$, then $\T(N_r) \leq \T(N_s)$ with $\T(N_r) = \T(N_s)$ allowed only if $\col(\T(N_r))\in X(i)_\0$;

\item[$\bullet$] if $r<s$ and $N_r,N_s$ are in the same column of $\Brackets{\la^{(i)}}$, then $\T(N_r) \leq \T(N_s)$ with $\T(N_r) = \T(N_s)$ allowed only if $\col(\T(N_r))\in X(i)_\1$;
\end{enumerate}
We denote by $\Std^X(\bla)$ the set of all standard $X$-colored $\bla$-tableaux. For $\T\in\Std^{X}(\bla)$, letting $\T(N_r)=l_r^{x_r}\in \Alph_{X}$ for $r=1,\dots,d$, we denote $\bl^\T:=l_1\cdots l_{d}$ and $\bx^\T:=x_1\cdots x_{d}$. 
For $\bmu\in\La^I(n,d)$, we say that $\T$ has {\em left weight} $\bmu$ if there exist $i_1,\dots,i_d\in I$ such that $e_{i_1}x_1=x_1,\,\dots,\,
e_{i_d}x_d=x_d$ and 
$
\bmu=\sum_{c=1}^d \biota_{i_c}(\eps_{l_c}).
$ We have the set of all standard $\bla$-tableaux of left weight $\bmu$: 
\begin{equation}\label{EAl}
	\Std^X(\bla,\bmu):=\{\T\in \Std^X(\bla)\mid \T\ \text{has left weight}\ \bmu\}.
\end{equation}

\section{Modified divided powers}
\label{SDiv}

Throughout the section, we fix a non-negative integer $d$.

\subsection{\boldmath Calibrated supermodules and their modified divided powers}
\label{SSCalMod}
Let 
$V_R=V_{R,\0}\oplus V_{R,\1}$ be a free $R$-supermodule of finite rank. We say that $V_R$ is {\em calibrated} if we are given a decomposition $V_{R,\0}=V_{R,\a}\oplus V_{R,\c}$ into two free $R$-modules. 

Let $V_R$ be a calibrated $R$-supermodule. We choose bases  $B^V_\a$, $B^V_\c$, $B^V_\1$ of $V_{R,\a}, V_{R,\c}, V_{R,\1}$, respectively. Thus $B^V_\0:=B^V_\a\sqcup B^V_\c$ is a basis of $V_{R,\0}$ and  $B^V=B^V_\a\sqcup B^V_\c\sqcup B^V_{\1}$ is a basis of $V_R$. Fix an arbitrary total order $<$ on $B^V$. 
Let $\bb\in (B^V)^d$. We define
\begin{equation}\label{EBBSign}
\lan\bb\ran
:=\sharp\{(k,l)\in[d]^2\mid k<l,\ b_k,b_l\in B^V_\1,\ b_k>b_l\}.
\end{equation}
For any $b\in B^V$, set 
\begin{equation}\label{EMultiplicity}
[\bb:b]:=\sharp\{k\in[d]\mid  b_k=b\} 
\end{equation} 
and let  
\begin{equation}\label{EBBFactorial}
[\bb]^!_{\c} :=\prod_{ b\in B^V_\c}[\bb:b]!.
\end{equation}

Define $\Se(B^V,d)$ to be the set of all $d$-tuples $\bb=b_1\cdots b_d\in (B^V)^d$ such that $b_k=b_l$ for 
some $1\leq k\neq l\leq d$ 
only if $b_k\in B^V_\0$. Then $\Se(B^V,d)\subseteq (B^V)^d$ is a $\Si_d$-invariant subset, so we can choose a corresponding set $\Se(B^V,d)/\Si_d$ of $\Si_d$-orbit representatives and identify it with the set of all $\Si_d$-orbits on $\Se(B^V,d)$, cf.  \S\ref{SGeneralNot}. 

Recall the divided power $R$-supermodule $\Gamma^d V_R$ from  \S\ref{SSSpaces}. 
For $\bb=b_1\cdots b_d\in\Se(B^V,d)$, we have elements 
\begin{equation*}\label{EXiDefV}
x_\bb:= \sum_{\bb'=b_1'\cdots b_d'\sim\bb} 
(-1)^{\lan\bb\ran+\lan\bb'\ran}
b_1'\otimes\dots\otimes b_d'
\in \Ga^d V_R
\qquad
\text{and}
\qquad
y_\bb:=[\bb]^!_{\c}\, x_\bb \in \Ga^d V_R.
\end{equation*}
Define the {\em modified divided power}
$$
\tilde\Ga^d V_R:=\spa_R\{y_\bb\mid \bb\in\Se(B^V,d)\}\subseteq \Ga^d V_R.
$$ 
Note that $\{x_\bb\mid \bb\in\Se(B^V,d)/\Si_d\}$
is a basis of $\Ga^d V_R$ and $\{y_\bb\mid \bb\in\Se(B^V,d)/\Si_d\}$
is a basis of $\tilde\Ga^d V_R$, cf. \cite[(3.9)]{EK1}.

The $n=1$ case of \cite[Proposition 4.11]{KMgreen2} yields:

\begin{Lemma}\label{IndC}
$\tilde\Ga^d V_R$ depends only on $V_{R,\a}$, and not on $V_{R,\c}$ or choice of basis $B^V$.
\end{Lemma}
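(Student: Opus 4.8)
The plan is to show that the span $\tilde\Ga^d V_R$ is unchanged when we replace the calibrated structure $V_{R,\0}=V_{R,\a}\oplus V_{R,\c}$ by another one with the same $V_{R,\a}$-part, and also when we replace the chosen basis $B^V$ by another compatible one. Since \cite[Proposition 4.11]{KMgreen2} in the case $n=1$ is quoted as the source, the real content here is just to unwind what that statement says in the present notation: the generalized Schur algebra $T^A_\a(n,d)$ for $n=1$ is (a twisted form of) $\tilde\Ga^d A$, and Proposition 4.11 there asserts that this algebra depends only on the subalgebra $\a$, i.e. on the even subalgebra spanned by $B_\a$. Translating $A\rightsquigarrow V_R$ and $\a\rightsquigarrow V_{R,\a}$ gives exactly the claim. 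So the first step is to state precisely the specialization of \cite[Proposition 4.11]{KMgreen2} that we invoke and observe that it literally is the assertion of the Lemma.

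If instead a self-contained argument is wanted, the approach I would take is the following. First, handle independence of the basis: fix the calibrated decomposition and suppose $\bar B^V=\bar B^V_\a\sqcup\bar B^V_\c\sqcup\bar B^V_\1$ is another basis with $\bar B^V_\a$ a basis of $V_{R,\a}$, $\bar B^V_\c$ a basis of $V_{R,\c}$, $\bar B^V_\1$ a basis of $V_{R,\1}$. One checks that each generator $y_{\bar\bb}$ built from $\bar B^V$ lies in $\spa_R\{y_\bb\mid\bb\in\Se(B^V,d)\}$ by expanding each $\bar b$ in terms of $B^V$, distributing the tensor product, and collecting terms; the parity signs $\lan\bb\ran$ and the factorial normalizations $[\bb]^!_\c$ are designed precisely so that the resulting combination of $x_\bb$'s reassembles into a combination of $y_\bb$'s with coefficients in $R$ (this is where the factor $[\bb]^!_\c$ clears the denominators that would otherwise appear from the $\c$-part, exactly as in \cite[(3.9)]{EK1}). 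By symmetry the two spans coincide. Second, for independence of $V_{R,\c}$: any two calibrated structures with the same $\a$-part differ by choosing a different complement $V_{R,\c}$ to $V_{R,\a}$ inside $V_{R,\0}$, and since $V_{R,\0}$ is a free $R$-module, a basis adapted to one complement can be transported to a basis adapted to the other (upper-triangular change of basis, identity on the $\a$-part); now apply the basis-independence just proved.

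The key step, and the main obstacle, is verifying that the $R$-span of $\{y_\bb\}$ is genuinely closed under the change of basis — that is, that after expanding $y_{\bar\bb}$ one really lands back in the integral span of the $y_\bb$ and not merely in the $\Q$-span of the $x_\bb$. Concretely, when a basis vector $\bar b\in\bar B^V_\c$ is written as an $R$-linear combination involving several $b\in B^V_\c$, repeated colors can appear and the normalization factors $[\bb]^!_\c$ for the source and target tuples differ by integer ratios that must be shown to be integral; the content of \cite[Proposition 4.11]{KMgreen2} (equivalently the symmetric-function-style identities behind \cite[(3.9)]{EK1}) is exactly that these ratios work out. Given that this is quoted as a special case of an already-proved result, I would simply cite it; if forced to reprove it I would reduce to the rank-one-per-color situation by a filtration argument on the number of distinct $\c$-colors and do the binomial bookkeeping there.
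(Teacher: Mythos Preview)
Your proposal is correct and matches the paper's approach exactly: the paper does not give a proof at all beyond the sentence ``The $n=1$ case of \cite[Proposition 4.11]{KMgreen2} yields'' placed immediately before the Lemma, which is precisely what your first paragraph says. Your second and third paragraphs (the self-contained sketch) go beyond what the paper provides, but the outline is sound and is essentially how the cited Proposition is proved; note only that Proposition 4.11 is stated for calibrated superalgebras, so the translation $A\rightsquigarrow V_R$ is legitimate because the definition of the span $\tilde\Ga^d(-)$ uses only the underlying calibrated $R$-supermodule structure, not the multiplication.
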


If $V_R,W_R$ are calibrated $R$-supermodules, then $V_R\oplus W_R$ is also a calibrated $R$-supermodule with $(V_R\oplus W_R)_\a:=V_{R,\a}\oplus W_{R,\a}$ and $(V_R\oplus W_R)_\c:=V_{R,\c}\oplus W_{R,\c}$. Moreover, recalling the star product from (\ref{EStarNotationGen}). 

\begin{Lemma}\label{GaTiSumOld}
We have an isomorphism of $R$-supermodules 
\[ \bigoplus_{d_1 + d_2 = d} (\tilde \Ga^{d_1} V_R)  \otimes (\tilde \Ga^{d_2} W_R)\iso \tilde \Ga^d (V_R \oplus W_R),\ y\otimes y'\mapsto y*y' . \]
\end{Lemma}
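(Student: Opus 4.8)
The plan is to build the isomorphism explicitly on the level of the chosen bases and then check it is well-defined, independent of choices (via Lemma \ref{IndC}), and bijective. First I would fix a basis $B^V = B^V_\a \sqcup B^V_\c \sqcup B^V_\1$ of $V_R$ and a basis $B^W = B^W_\a \sqcup B^W_\c \sqcup B^W_\1$ of $W_R$ as in \S\ref{SSCalMod}, so that $B^{V\oplus W}:=B^V\sqcup B^W$ is a basis of $V_R\oplus W_R$ compatible with the calibration $(V_R\oplus W_R)_\a = V_{R,\a}\oplus W_{R,\a}$, $(V_R\oplus W_R)_\c = V_{R,\c}\oplus W_{R,\c}$. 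I would order $B^{V\oplus W}$ by keeping the given orders on $B^V$ and $B^W$ and declaring every element of $B^V$ smaller than every element of $B^W$. The key point is that for this ordering, a tuple $\bb\in\Se(B^{V\oplus W},d)$ is the same as a pair consisting of a subset (the positions landing in $B^V$, forced to be an initial segment up to the $\Si_d$-action) together with tuples $\bb_1\in\Se(B^V,d_1)$ and $\bb_2\in\Se(B^W,d_2)$ with $d_1+d_2=d$; concretely, every $\Si_d$-orbit in $\Se(B^{V\oplus W},d)$ has a representative of the form $\bb_1\bb_2$ (concatenation) with $\bb_1\in\Se(B^V,d_1)$, $\bb_2\in\Se(B^W,d_2)$, and the pair $(\bb_1,\bb_2)$ is unique up to the $\Si_{d_1}\times\Si_{d_2}$-action.

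Next I would identify the star product $x_{\bb_1}*x_{\bb_2}$, defined via \eqref{EStarNotationGen} as a sum over shortest $(\Si_{d_1}\times\Si_{d_2})\backslash\Si_d$-coset representatives, with the basis element $x_{\bb_1\bb_2}\in\Ga^d(V_R\oplus W_R)$ up to a sign; with the ordering convention above the sign discrepancy vanishes because no inversion pair $(k,l)$ with $b_k\in B^V_\1$, $b_l\in B^W_\1$ contributes (all $B^V$-letters precede all $B^W$-letters). This gives $x_{\bb_1}*x_{\bb_2}=x_{\bb_1\bb_2}$ on the nose. Passing to the $y$'s, since the $\c$-colored letters of $\bb_1\bb_2$ are exactly those of $\bb_1$ together with those of $\bb_2$ and they never coincide across the two blocks, we get $[\bb_1\bb_2]^!_\c = [\bb_1]^!_\c\,[\bb_2]^!_\c$, hence $y_{\bb_1}*y_{\bb_2}=y_{\bb_1\bb_2}$. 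Therefore the $R$-linear map $y\otimes y'\mapsto y*y'$ sends the basis $\{y_{\bb_1}\otimes y_{\bb_2}\}$ (over $d_1+d_2=d$ and orbit representatives $\bb_1,\bb_2$) of $\bigoplus_{d_1+d_2=d}\tilde\Ga^{d_1}V_R\otimes\tilde\Ga^{d_2}W_R$ bijectively onto the basis $\{y_{\bb}\mid\bb\in\Se(B^{V\oplus W},d)/\Si_d\}$ of $\tilde\Ga^d(V_R\oplus W_R)$, so it is an isomorphism of $R$-supermodules. Finally I would note that $y\otimes y'\mapsto y*y'$ is manifestly independent of the basis choices — the star product is defined without reference to a basis — so the statement holds as phrased; alternatively one can invoke Lemma \ref{IndC} to reduce to the case $V_{R,\c}=W_{R,\c}=0$.

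The main obstacle is the sign bookkeeping: verifying that $\lan\bb_1\bb_2\ran = \lan\bb_1\ran+\lan\bb_2\ran$ for the chosen ordering and that the signs in the expansion of the star product \eqref{EStarNotationGen} match those in the definition of $x_{\bb_1\bb_2}$ — this is where the commented-out Lemma on $(-1)^{\lan\bb\ran+\lan\bc\ran+\lan\bb,\bc\ran}$-invariance under adjacent transpositions would be used to move between different orbit representatives. Once the ordering is chosen so that $B^V$-letters precede $B^W$-letters, all cross-block inversions involve one letter from $V$ and one from $W$, which are distinct, so the bad configurations contributing to the Koszul signs simply do not occur, and the computation reduces to a routine check. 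Everything else — bijectivity on bases, the factorial identity, independence of choices — is formal.
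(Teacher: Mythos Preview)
Your proposal is correct and follows exactly the approach of the paper: the paper's proof is the single line ``Note that $y_{\bb^1}*y_{\bb^2}=y_{\bb^1\bb^2}$ for all $\bb^1\in \Se(B^V,d_1),\,\bb^2\in \Se(B^W,d_2)$ and compare bases,'' and you have simply unpacked the sign and orbit bookkeeping behind that identity.
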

\begin{proof}
Note that $y_{\bb^1}*y_{\bb^2}=y_{\bb^1\bb^2}$ for all $\bb^1\in \Se(B^V,d_1),\,\bb^2\in \Se(B^W,d_2)$ and compare bases.  
\end{proof}

\subsection{\boldmath Bilinear form on $\tilde \Ga^dV$}

Let $V_R$ be a calibrated $R$-supermodule.  
Suppose in addition that we are given an ($R$-valued) even supersymmetric (or superantisymmetric), non-degenerate bilinear form $(\cdot, \cdot)$ on $V_R$,
such that $(V_{R,\a},V_{R,\a})=0$ and the $R$-complement $V_{R,\c}$ of $V_{R,\a}$ in $V_{R,\0}$ can be chosen so that the restriction of $(\cdot,\cdot)$ to $V_{R,\a}\times V_{R,\c}$ is a perfect pairing. Until the end of this subsection, we always assume that the complement $V_{R,\c}$ has this property. 

Since the form is non-degenerate we may select bases
$B^V_{\a} = \{a_1, \ldots, a_r\}, B^V_{\c} = \{c_1, \ldots, c_r\}$ and $B^V_\1$ for $V_{R, \a}, V_{R, \c}$, and $V_{R, \1}$ respectively
such that $(a_i, c_j) = \de_{i,j}$ for all $i,j \in [r]$. Set $B^V = B^V_{\a} \cup B^V_{\c} \cup B^V_\1$. Let $(B^{V})^*=\{b^*\mid b\in B^V\}$ be the dual basis with respect to $(\cdot, \cdot)$. 
Note that $c_i^* = a_i$ for all $i$, but it is not necessarily true that $a_i^* = c_i$. We take $V_{R,\c}'$ to be the $R$-span of $a_1^*,\dots, a_r^*$, so that $V_{R,\c}'$ is another $R$-complement of $V_{R,\a}$ in $V_{R,\0}$. We now have $(B^{V})^*=(B^{V})^*_\a\cup (B^{V})^*_\c\cup (B^{V})^*_\1$ where 
$$
(B^{V})^*_\a:=B^{V}_\a,\ (B^{V})^*_\c:=\{a_1^*,\dots,a_r^*\},\ (B^{V})^*_\1=\{b^*\mid b\in B^V_\1\}. 
$$
By Lemma~\ref{IndC}, $\tilde\Ga^dV_R$ is independent of the choice of a complement $V_{R,\c}$ and of the choice of a  corresponding basis. So we now have two $R$-bases of 
$\tilde\Ga^dV_R$: 
$$
\{y_\bb  \mid \bb\in\Se(B^V,d)/\Si_d\}
\quad
\text{and}
\quad 
\{y_\bb \mid \bb\in\Se((B^{V})^*,d)/\Si_d\}.
$$

The form $(\cdot,\cdot)$ extends to the form $(\cdot,\cdot)_d$ on $V_R^{\otimes d}$ such that 
\begin{equation}\label{TBil}
(v_1 \otimes \dots \otimes v_d, w_1 \otimes \dots \otimes w_d)_d = (-1)^{\lan \bv, \bw \ran} (v_1, w_1) \cdots (v_d, w_d).
\end{equation}
for all $\bv=v_1\dots v_d, \bw=w_1\cdots w_d \in V^d$. 
Since the form is even, we have for any $\si\in\Si_d$:
\begin{equation}\label{EFormSign}
((v_1 \otimes \cdots \otimes v_d)^\si, (w_1 \otimes \cdots \otimes w_d)^\si)_d = (v_1 \otimes \cdots \otimes v_d, w_1 \otimes \cdots \otimes w_d)_d.
\end{equation}
Moreover, for $\bb,\bb'\in (B^V)^d$, we have 
\begin{equation}\label{EFormBasis}
(b_1' \otimes \dots \otimes b_d', b^*_1 \otimes \dots \otimes b^*_d)_d =(-1)^{\lan\bb',\bb\ran}\de_{\bb,\bb'}. 
\end{equation}

\begin{Lemma}\label{TBilY}
Let $\bb, \bb' \in \Se (B^V, d)$. Then $\bb^*:=b_1^*\cdots b_d^*\in \Se((B^{V})^*,d)$, and 
$(y_{\bb'}, y_{\bb^*})_d = \pm d!\, \delta_{\bb \sim \bb'} $. 
\end{Lemma}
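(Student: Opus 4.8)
The plan is to compute $(y_{\bb'}, y_{\bb^*})_d$ directly by expanding both elements in the tensor basis and using the orthogonality relation \eqref{EFormBasis}. First I would observe that $\bb^* = b_1^* \cdots b_d^* \in \Se((B^V)^*, d)$: since $\bb \in \Se(B^V, d)$, a repetition $b_k = b_l$ with $k \neq l$ forces $b_k \in B^V_\0$, hence $b_k^* = b_l^*$ and $b_k^* \in (B^V)^*_\0$ (as duality preserves parity because the form is even), so $\bb^*$ satisfies the defining condition of $\Se((B^V)^*, d)$. Note also that the $\Si_d$-action is compatible with $*$, i.e. $(\bb^*)' = (\bb')^*$ coordinatewise for permuted tuples, and that $[\bb]^!_\c$ depends only on the multiplicities of the $\c$-entries, which match up for $\bb$ and $\bb^*$ after suitable bookkeeping (using $(B^V)^*_\c = \{a_1^*, \dots, a_r^*\}$ in bijection with $B^V_\c$).

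Next I would write out
$$
y_{\bb'} = [\bb']^!_\c \sum_{\bc \sim \bb'} (-1)^{\lan \bb' \ran + \lan \bc \ran} c_1 \otimes \cdots \otimes c_d,
\qquad
y_{\bb^*} = [\bb^*]^!_\c \sum_{\bd \sim \bb^*} (-1)^{\lan \bb^* \ran + \lan \bd \ran} d_1 \otimes \cdots \otimes d_d,
$$
and pair them using \eqref{TBil}. By \eqref{EFormBasis}, the pairing $(c_1 \otimes \cdots \otimes c_d, d_1 \otimes \cdots \otimes d_d)_d$ is zero unless $\bd = \bc^*$ (coordinatewise), in which case it equals $(-1)^{\lan \bc, \bc\ran}$ — wait, more precisely $(-1)^{\lan \bc', \bc\ran}$ with the roles as in \eqref{EFormBasis}; I would track this sign carefully. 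Since every term $\bd$ in the second sum is of the form (something) $\sim \bb^*$, and $\bc^* \sim \bb^*$ exactly when $\bc \sim \bb'$ (given $\bb' \sim \bb$, which is the content of the $\delta_{\bb \sim \bb'}$), the double sum collapses: it is zero unless $\bb \sim \bb'$, and when $\bb \sim \bb'$ it becomes a single sum over $\bc \sim \bb'$ of terms each equal to $\pm 1$ (a product of the sign from \eqref{EFormBasis} with the two $(-1)^{\lan \cdot \ran}$ factors, which I claim is independent of $\bc$). The number of such $\bc$ is $|\Si_d / \Si_{\bb'}| = d! / |\Si_{\bb'}|$, and the prefactor $[\bb']^!_\c \, [\bb^*]^!_\c$ together with $|\Si_{\bb'}|$ should reconcile to give exactly $\pm d!$: indeed $\Si_{\bb'}$ is a product of symmetric groups on the $\0$-entry multiplicities, $|\Si_{\bb'}| = \prod_{b \in B^V_\0} [\bb':b]!$, and we need $[\bb']^!_\c [\bb^*]^!_\c / |\Si_{\bb'}|$ times $d!$ to be $\pm d!$ — this forces a check that the $\a$-entries of $\bb'$ never repeat (true, since if $b_k = b_l \in B^V_\a$ then… actually $\a$-entries can repeat too; here I must use that $(B^V_\a, B^V_\a) = 0$, so repeated $\a$-entries kill the corresponding terms in the pairing, restricting the effective sum).

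The main obstacle I anticipate is precisely this sign and multiplicity bookkeeping: reconciling the factorials $[\bb']^!_\c$, $[\bb^*]^!_\c$, the size of the stabilizer $\Si_{\bb'}$, and the constraint imposed by $(B^V_\a, B^V_\a) = 0$ so that the final constant is unambiguously $\pm d!$ rather than some other multiple — and verifying that the total sign does not depend on the choice of representative $\bc$, which rests on the even-ness of the form via \eqref{EFormSign} and a lemma of the type in the commented-out \texttt{TBilSign} (parity-matching of consecutive entries under $s_k$). I would isolate this as a computation with the $\Si_d$-action, reducing to the case where $\bb = \bb'$ and then to understanding a single orbit sum, handling the $\c$-entries (where repetitions contribute the factorials) separately from the $\a$- and $\1$-entries. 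The supersymmetric-versus-superantisymmetric dichotomy only affects the overall sign, consistent with the $\pm$ in the statement.
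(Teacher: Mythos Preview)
Your approach is the same as the paper's: expand both $y$'s, use the orthogonality \eqref{EFormBasis} to reduce to $\bb\sim\bb'$, then count orbit sizes against the factorial prefactors. You also correctly identify that the sign is constant across the orbit via \eqref{EFormSign}. The gap is in the factorial bookkeeping.

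You write that the $\c$-multiplicities ``match up for $\bb$ and $\bb^*$'', and then later try to force $[\bb']^!_\c\,[\bb^*]^!_\c/|\Si_{\bb'}|=1$ by arguing that $\a$-entries cannot repeat, or by invoking $(V_{R,\a},V_{R,\a})=0$ to kill terms. Neither is correct, and neither is needed. The point is that under $b\mapsto b^*$ the set $B^V_\a$ maps to $(B^V)^*_\c=\{a_1^*,\dots,a_r^*\}$ (since $c_i^*=a_i$, so $(B^V)^*_\a=B^V_\a$). Hence
\[
[\bb^*]^!_\c \;=\; \prod_{i=1}^r [\bb^*:a_i^*]! \;=\; \prod_{i=1}^r [\bb:a_i]! \;=\; \prod_{a\in B^V_\a}[\bb:a]!,
\]
so that $[\bb]^!_\c\,[\bb^*]^!_\c=\prod_{b\in B^V_\0}[\bb:b]!=|\Si_\bb|$ (odd entries do not repeat). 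Multiplying by the orbit count $[\Si_d:\Si_\bb]$ gives exactly $d!$, up to sign. This is precisely what the paper does; the hypothesis $(V_{R,\a},V_{R,\a})=0$ enters only to guarantee the identification $(B^V)^*_\a=B^V_\a$ in the first place, not to annihilate terms in the sum.
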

\begin{proof}
By (\ref{EFormBasis}), $(y_\bb', y_{\bb^*})_d\neq 0$ only if $\bb \sim \bb'$. So we may assume that $\bb'=\bb$ and that the stabilizer $\Si_\bb=\Si_{\bb^*}$ is a standard parabolic subgroup. As 
no odd element repeats in $\bb$, we have
$$
	| \Si_\bb| = \bigg( \prod_{b\in B^V_\a} [\bb : b]! \bigg) \bigg( \prod_{b\in B^V_\c} [\bb: b]! \bigg)
	= \bigg( \prod_{b^*\in (B^{V})^*_\c} [\bb^* : b^*]! \bigg) \bigg( \prod_{b\in B^V_\c} [\bb: b]! \bigg).
$$
So, using (\ref{EFormSign}) and (\ref{EFormBasis}), we have that
$(y_\bb, y_{\bb^*})_d$ equals 
\begin{align*}
	\,& \bigg( \bigg( \prod_{b\in B^V_\c} [\bb: b]! \bigg) x_\bb , 
	\bigg( \prod_{b^*\in (B^{V})^*_\c} [\bb^* : b^*]! \bigg)
	x_{\bb^*}\bigg)_d\\
	=\,& \bigg( \bigg( \prod_{b\in B^V_\c} [\bb: b]! \bigg) \sum_{\si \in \Si_d / \Si_{\bb}} (b_1 \otimes \cdots \otimes b_d)^\si , 
	\bigg( \prod_{b^*\in (B^{V})^*_\c} [\bb^* : b^*]! \bigg)
	\sum_{\si \in \Si_d / \Si_{\bb}} (b_1^* \otimes \cdots \otimes b_d^*)^\si \bigg)_d\\
	=\,& \bigg( \prod_{b\in B^V_\c} [\bb: b]! \bigg) \bigg( \prod_{b^*\in (B^{V})^*_\c} [\bb^* : b^*]! \bigg) [\Si_d : \Si_\bb] (b_1 \otimes \cdots \otimes b_d, b_1^* \otimes \cdots \otimes b_d^*)_d\\
	=\,& \pm d!
\end{align*}
which completes the proof.
\end{proof}

In view of the lemma, we have $(z,w)_d$ is divisible by $d!$ for all $z,w\in \tilde\Ga^d V_R$. So we can define a new form on $\tilde\Ga^d V_R$ by setting 
\begin{equation}\label{ESimForm}
(z,w)_\sim:=\frac{1}{d!}(z,w)_d
\end{equation}
for all $z,w\in \tilde\Ga^d V_R$. The following is now clear from the lemma:

\begin{Proposition}\label{GaTiBil}
The bilinear form $( \cdot, \cdot)_\sim$ on $\tilde \Ga^d V_R$ is even and non-degenerate. Moreover, it is supersymmetric or superantisymmetric depending on the parity of $d$ and whether $(\cdot, \cdot)$ is supersymmetric or superantisymmetric. 
\end{Proposition}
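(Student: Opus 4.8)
The plan is to deduce Proposition~\ref{GaTiBil} directly from Lemma~\ref{TBilY}, in the same spirit as the ``following is now clear'' remark preceding it, but spelling out the three assertions: non-degeneracy, evenness, and the symmetry type. First I would fix the bases $B^V=B^V_\a\sqcup B^V_\c\sqcup B^V_\1$ and $(B^V)^*$ with $(a_i,c_j)=\de_{i,j}$ as in the paragraph before Lemma~\ref{TBilY}, so that $\{y_\bb\mid \bb\in\Se(B^V,d)/\Si_d\}$ and $\{y_{\bb^*}\mid \bb\in\Se(B^V,d)/\Si_d\}$ are two $R$-bases of $\tilde\Ga^dV_R$ (using Lemma~\ref{IndC} to know the second set really lies in, and spans, $\tilde\Ga^dV_R$). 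Lemma~\ref{TBilY} then says that with respect to these two bases the Gram matrix of $(\cdot,\cdot)_d$ is, up to reindexing by the bijection $\bb\mapsto\bb^*$ on $\Se(B^V,d)/\Si_d$, the diagonal matrix $(\pm d!)\cdot\Id$ with nonzero diagonal entries. Dividing by $d!$, as in \eqref{ESimForm}, the Gram matrix of $(\cdot,\cdot)_\sim$ is $\diag(\pm 1)$, which is invertible over $R$; hence $(\cdot,\cdot)_\sim$ is non-degenerate.

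Next I would check evenness. The form $(\cdot,\cdot)_d$ on $V_R^{\otimes d}$ is even because $(\cdot,\cdot)$ on $V_R$ is even and the sign in \eqref{TBil} does not affect parity; concretely, $(z,w)_d=0$ whenever $z,w$ are homogeneous of opposite parity. Since $\tilde\Ga^dV_R$ is an $R$-submodule of $V_R^{\otimes d}$ that inherits its $\Z/2$-grading, the restriction $(\cdot,\cdot)_d|_{\tilde\Ga^dV_R}$ is even, and so is its scalar multiple $(\cdot,\cdot)_\sim$.

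For the symmetry type, I would argue that $(\cdot,\cdot)_d$ on $V_R^{\otimes d}$ is supersymmetric if $(\cdot,\cdot)$ is supersymmetric and $d$ is even or $(\cdot,\cdot)$ is superantisymmetric and $d$ is even is not quite the right bookkeeping --- more precisely, swapping the two tensor legs in \eqref{TBil} one factor at a time produces $d$ copies of the symmetry sign of $(\cdot,\cdot)$ together with Koszul signs that combine to give exactly the sign $\lan\bv,\bw\ran\leftrightarrow\lan\bw,\bv\ran$ bookkeeping already built into \eqref{TBil}; the upshot is that $(\cdot,\cdot)_d$ is supersymmetric (resp.\ superantisymmetric) precisely when $\epsilon^d=+1$ (resp.\ $-1$), where $\epsilon=+1$ if $(\cdot,\cdot)$ is supersymmetric and $\epsilon=-1$ if it is superantisymmetric. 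Restricting to $\tilde\Ga^dV_R$ and rescaling by the constant $1/d!$ preserves this, giving the claim: $(\cdot,\cdot)_\sim$ is supersymmetric if $(\cdot,\cdot)$ is supersymmetric, or if $(\cdot,\cdot)$ is superantisymmetric and $d$ is even; and superantisymmetric if $(\cdot,\cdot)$ is superantisymmetric and $d$ is odd.

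The only genuinely delicate point is the sign bookkeeping in the last paragraph: one must make sure the Koszul signs in \eqref{TBil} interact correctly with the transposition symmetry of the one-variable form, i.e.\ verify $(w_1\otimes\cdots\otimes w_d,\,v_1\otimes\cdots\otimes v_d)_d=\epsilon^d(-1)^{|v||w|}(v_1\otimes\cdots\otimes v_d,\,w_1\otimes\cdots\otimes w_d)_d$ on homogeneous tensors, where $|v|=\sum|v_k|$, $|w|=\sum|w_k|$. This is a short but careful induction on $d$ (or a direct comparison of $\lan\bv,\bw\ran$ and $\lan\bw,\bv\ran$ using \eqref{EAngle2}), and everything else is immediate from Lemma~\ref{TBilY}. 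Since the paper says this is ``now clear from the lemma,'' I expect the intended proof to be essentially a one-line invocation of Lemma~\ref{TBilY} for non-degeneracy plus these routine parity checks, so the write-up can be kept brief.
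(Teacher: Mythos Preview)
Your proposal is correct and is exactly the approach the paper has in mind: the paper's ``proof'' is literally the one sentence ``The following is now clear from the lemma,'' and you have simply unpacked that sentence---non-degeneracy from the $\pm d!$ Gram matrix in Lemma~\ref{TBilY}, evenness inherited from $(\cdot,\cdot)$ via \eqref{TBil}, and the symmetry type from the identity $\langle\bv,\bw\rangle+\langle\bw,\bv\rangle+\sum_k|v_k||w_k|\equiv|v||w|\pmod 2$, which yields the $\epsilon^d$ rule you state. There is nothing more to add.
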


\subsection{Modified divided powers of algebras}
\label{SSGaA}
Let $A_R=A_{R,\0}\oplus A_{R,\1}$ be an $R$-superalgebra such that $A_{R,\0}$ and $A_{R,\1}$ are free of finite rank as $R$-modules. We say that $A_R$ is a {\em calibrated superalgebra} if we are given a free $R$-module decomposition $A_{R,\0}=\a_R\oplus \c_R$ such that $\a_R$ is a unital subalgebra of $A_R$. Choose bases $B^A_\a,B^A_\c,B^A_\1$ of $\a_R,\c_R,A_{R,\1}$, respectively, and set $B^A=B^A_\a\cup B^A_\c\cup B^A_\1$. Our main examples come from based quasihereditary algebras over $R$ with conforming heredity data as in \ref{SSBQHA}.  In that case we would take $\a_R$ to be the $R$-span of $B_\a$, cf. (\ref{EBA}). 

Note that a calibrated $R$-superalgebra is in particular a calibrated $R$-supermodule as in \S\ref{SSCalMod}, so we have modified divided power $\tilde\Ga^dA_R$ and elements $x_\bb\in\Ga^dA_R$, $y_\bb\in\tilde\Ga^dA_R$ for $\bb\in\Se(B^A,d)$. But to differentiate between algebras and module, when working with algebras, it will be convenient to use another notation:
\begin{equation}\label{EOldEta}
\xi^\bb:=x_\bb\quad\text{and}\quad \eta^\bb:=y_\bb.
\end{equation}
Recall from \S\ref{SSSpaces} that $\Ga^dA_R$ is a subsuperalgebra of $A_R^{\otimes d}$. Moreover, $\tilde\Ga^d A_R$ is a (unital) subsuperalgebra of $\Ga^dA_R$, see \cite[Proposition~ 3.12]{KMgreen2}, with basis 
\begin{equation}\label{EEtaBasisOne}
\{\eta^\bb\mid \bb\in\Se(B^V,d)/\Si_d\}.
\end{equation}

Recall from \cite[\S4.1]{EK1}, that \(\bigoplus_{d\geq 0} A_R^{\otimes d}\) is a bisuperalgebra 
with 
the coproduct \(\coproduct\) defined so that
\begin{align*}
	\coproduct\,:\,\, A_R^{\otimes d}\,\, &\to \,\,\bigoplus_{c=0}^d A_R^{\otimes c} \otimes A_R^{\otimes (d-c)},\\
	a_1 \otimes \cdots \otimes a_d\,\,&\mapsto\,\, \sum_{c=0}^d (a_1 \otimes \cdots \otimes a_c) \otimes (a_{c+1} \otimes \cdots \otimes a_d).
\end{align*}
Moreover, by the $n=1$ case of \cite[Corollary 3.24]{KMgreen2}, 
$\bigoplus_{d\geq 0} \tilde\Ga^d A_R$
is a sub-bisuperalgebra of\, $\bigoplus_{d\geq 0} A_R^{\otimes d}$.

There is also another bisuperalgebra structure on $\bigoplus_{d\geq 0} A_R^{\otimes d}$ using the star product $*$ of  (\ref{EStarNotationGen}). In fact:

\begin{Lemma} \label{LNablaStarn=1} {\rm \cite[Corollary 4.4]{KMgreen2}} \ 
$\bigoplus_{d\geq 0} \tilde\Ga^d A_R$ is a sub-bisuperalgebra of $\bigoplus_{d\geq 0} A_R^{\otimes d}$ with respect to the coproduct $\coproduct$ and the product $*$
\end{Lemma}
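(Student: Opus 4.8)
The plan is to deduce Lemma~\ref{LNablaStarn=1} from facts already recorded in the excerpt, rather than rerun the computations from \cite{KMgreen2}. Two structural statements are available: first, that $\bigoplus_{d\geq 0}\tilde\Ga^dA_R$ is closed under the ordinary multiplication inherited from $\bigoplus_{d\geq 0}A_R^{\otimes d}$ (it is a unital subsuperalgebra of each $\Ga^dA_R$ by \cite[Proposition~3.12]{KMgreen2}); and second, that $\bigoplus_{d\geq 0}\tilde\Ga^dA_R$ is a sub-bisuperalgebra of $\bigoplus_{d\geq 0}A_R^{\otimes d}$ for the coproduct $\coproduct$ (the $n=1$ case of \cite[Corollary~3.24]{KMgreen2}). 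What remains for the present lemma is to handle the \emph{other} bisuperalgebra structure, whose product is the star product $*$ of \eqref{EStarNotationGen} and whose coproduct is again $\coproduct$; concretely one must check that $\tilde\Ga^{d_1}A_R * \tilde\Ga^{d_2}A_R\subseteq \tilde\Ga^{d_1+d_2}A_R$, that the unit lies in $\tilde\Ga^0A_R=R$, that $\coproduct$ restricts as before, and that the compatibility (bialgebra) axiom between $*$ and $\coproduct$ holds on the subobject — but this last point is automatic once closure is known, since the axioms are identities in the ambient bisuperalgebra.

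The first step is to verify closure of $\bigoplus_d\tilde\Ga^dA_R$ under $*$. This is exactly the content of the identity $\eta^{\bb^1}*\eta^{\bb^2}=\eta^{\bb^1\bb^2}$ already used in the proof of Lemma~\ref{GaTiSumOld}: for $\bb^1\in\Se(B^A,d_1)$ and $\bb^2\in\Se(B^A,d_2)$ with $\bb^1\bb^2\in\Se(B^A,d_1+d_2)$, the sum defining $\eta^{\bb^1\bb^2}$ over the $\Si_{d_1+d_2}$-orbit of $\bb^1\bb^2$ factors through the coset sum defining $*$ applied to the $\Si_{d_1}$- and $\Si_{d_2}$-orbit sums, with the signs $\lan\cdot\ran$ matching because $\lan\bb^1\bb^2\ran$ splits into the contributions from within each block plus the cross contribution that $*$ records. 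When $\bb^1\bb^2$ repeats an odd letter — i.e. is not in $\Se(B^A,d_1+d_2)$ — the sign-twisted symmetrization kills the product, so $\eta^{\bb^1}*\eta^{\bb^2}=0$, which is still inside $\tilde\Ga^{d_1+d_2}A_R$. Thus on the basis $\{\eta^\bb\}$ of \eqref{EEtaBasisOne} the star product lands back in $\bigoplus_d\tilde\Ga^dA_R$, giving closure; the unit is $\eta^{\emptyset}\in\tilde\Ga^0A_R=R$.

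The second step is the coproduct, and here there is essentially nothing new to do: the coproduct $\coproduct$ is the same map as in the already-cited $n=1$ case of \cite[Corollary~3.24]{KMgreen2}, and that result says precisely that $\coproduct\big(\tilde\Ga^dA_R\big)\subseteq\bigoplus_{c}\tilde\Ga^cA_R\otimes\tilde\Ga^{d-c}A_R$. The counit is the projection to the degree-$0$ component. Finally, the bialgebra compatibility axiom relating $*$ and $\coproduct$ holds in the ambient $\big(\bigoplus_d A_R^{\otimes d},*,\coproduct\big)$ (this is the ``other bisuperalgebra structure'' asserted in the sentence preceding the lemma, i.e.\ \cite[\S4.1]{EK1}), and any identity valid in an ambient bisuperalgebra restricts to a subobject that is closed under both operations and contains the (co)unit. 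Hence $\bigoplus_d\tilde\Ga^dA_R$ with $*$ and $\coproduct$ is a sub-bisuperalgebra.

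The only genuine obstacle is the first step, and specifically the sign bookkeeping: one must confirm that the parity sign $(-1)^{\lan\bb\ran}$ attached to $\eta^\bb$ in its definition interacts correctly with the Koszul signs built into $*$ via \eqref{ESiAct}–\eqref{EAngleSi}, so that the orbit sum for $\bb^1\bb^2$ is reproduced exactly (and not merely up to an overall sign or a nontrivial multiplicity). This is the same kind of computation as in Lemma~\ref{GaTiSumOld} and in the suppressed Lemma on bilinear signs, so I expect it to go through by the standard splitting of an inversion count over a parabolic decomposition; but it is where all the care is needed. Everything else is formal.
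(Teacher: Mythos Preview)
The paper gives no proof here; the lemma is simply quoted from \cite[Corollary~4.4]{KMgreen2}. Your plan is an independent verification, and its architecture is right: closure under $\coproduct$ is already cited, so only closure under $*$ needs work, and bialgebra compatibility then descends from the ambient.

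There is, however, a genuine slip in your first step. The identity $\eta^{\bb^1}*\eta^{\bb^2}=\eta^{\bb^1\bb^2}$ you invoke from the proof of Lemma~\ref{GaTiSumOld} holds there because $\bb^1\in(B^V)^{d_1}$ and $\bb^2\in(B^W)^{d_2}$ with $B^V$ and $B^W$ \emph{disjoint}, so no letter can repeat across the two tuples. In the present setting both tuples lie in the same $B^A$, and when some $b\in B^A_\a$ occurs in both, the $\Si_{d_1+d_2}$-orbit of $\bb^1\bb^2$ is strictly smaller than the coset sum of the product of orbits, so the star product overcounts. Tracking stabilizers one finds
\[
\eta^{\bb^1}*\eta^{\bb^2}\;=\;\Bigl(\prod_{b\in B^A_\a}\binom{[\bb^1:b]+[\bb^2:b]}{[\bb^1:b]}\Bigr)\,\eta^{\bb^1\bb^2}
\]
whenever $\bb^1\bb^2\in\Se(B^A,d_1+d_2)$; for instance with $b\in B^A_\a$ one has $\eta^{b}*\eta^{b}=2\,\eta^{bb}$, not $\eta^{bb}$. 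The $\c$-factorials built into $\eta^\bb$ absorb the $\c$-part of the stabilizer ratio but leave the $\a$-binomials behind. Since this coefficient is a positive integer, closure under $*$ still follows and your overall argument survives with this correction --- but the identity as you state it is false and should not be presented as ``exactly the content'' of Lemma~\ref{GaTiSumOld}.
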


\subsection{\boldmath $\tilde\Ga^d V$ as a module over $\tilde\Ga^d A$}
\label{SSGammaTildeMod}
Let $A_R=\a_R\oplus \c_R\oplus A_{R,\1}$ be a calibrated superalgebra as in the previous subsection. 
Let $V_R=V_{R,\a}\oplus V_{R,\c}\oplus V_{R,\1}$ be a calibrated $R$-supermodule with the corresponding basis $B^V=B^V_\a\sqcup B^V_\c\sqcup B^V_\1$ as in \S\ref{SSCalMod}, and assume in addition that $V_R$ is an $A_R$-supermodule. We say that $V_R$ is a {\em calibrated} $A_R$-supermodule if $\a_R V_{R,\a}\subseteq V_{R,\a}$.

Recall from \S\ref{SSSpaces} that $\Gamma^d V_R$ is naturally a $\Gamma^d A_R$-supermodule. So upon restriction to the subalgebra 
$\tilde \Ga^d A_R\subseteq \Ga^d A_R$,   
$\Ga^d V_R$ is  
a $\tilde\Ga^d A_R$-supermodule. In Lemma~\ref{GaTiMod} we will show that $\tilde\Ga^dV_R\subseteq \Ga^dV_R$ is a $\tilde\Ga^dA_R$-subsupermodule if $V_R$ is a calibrated $A_R$-supermodule. .  

For $a\in A_R$ and $b,c\in B^V$, we define the structure constants $\ka^b_{a,c}\in R$ from $ac=\sum_{b\in B^V}\ka^b_{a,c} b$.
For $\ba\in \Se(B^A,d)$ and $\bb,\bc\in \Se(B^V,d)$, we also set 
$\kappa_{\ba, \bc}^{\bb} := \kappa_{a_1, c_1}^{b_1} \cdots \kappa_{a_d, c_d}^{b_d}$. 
We want to describe the structure constants $f^{\bb}_{\ba, \bc}$ defined from 
$$\xi^\ba x_{\bc} 
= \sum_{\bb \in \Se (B^V, d)/\Si_d} f^{\bb}_{\ba, \bc} x_{\bb}.$$ 
Recall the stabilizer $\Si_\bs$ from \S\ref{SGeneralNot}. 
The following lemma is an analogue of \cite[Corollary~3.7]{KMgreen2}: 

\begin{Lemma}\label{GaPr}
Let $\ba \in \Se (B^A, d)$ and $\bb,\bc \in \Se (B^V, d)$. 
Let $X$ be the set of all pairs $(\ba',\bc')\in \Se (B^A, d)\times\Se (B^V, d)$ such that $\ba'\sim\ba$, $\bc'\sim\bc$ and $|a'_k|+|c_k'|=|b_k|$ for all $k=1,\dots,d$. 
Then  
\[ f^{\bb}_{\ba, \bc} = \sum_{(\ba', \bc') \in X/\Si_\bb} (-1)^{\lan \ba \ran + \lan \ba' \ran + \lan \bc \ran + \lan \bc' \ran + \lan \ba', \bc' \ran} [\Si_{\bb} : (\Si_{\bb} \cap \Si_{\ba'} \cap \Si_{\bc'})] \kappa_{\ba', \bc'}^{\bb} \]
\end{Lemma}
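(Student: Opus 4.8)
The plan is to compute $\xi^{\ba}x_{\bc}$ directly from the definitions of $\xi^\ba=x_\ba$ and $x_\bc$ as signed sums over orbits, and to keep careful track of the signs. First I would write
$$
\xi^\ba x_\bc=\Big(\sum_{\ba''\sim\ba}(-1)^{\lan\ba\ran+\lan\ba''\ran}\,a''_1\otimes\dots\otimes a''_d\Big)\Big(\sum_{\bc''\sim\bc}(-1)^{\lan\bc\ran+\lan\bc''\ran}\,c''_1\otimes\dots\otimes c''_d\Big),
$$
and use the multiplication rule in $A_R^{\otimes d}$, which introduces the sign $(-1)^{\lan\ba'',\bc''\ran}$ and produces the term $\prod_k(a''_kc''_k)=\sum_{\bb''}\kappa^{\bb''}_{\ba'',\bc''}\,b''_1\otimes\dots\otimes b''_d$. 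Since $\xi^\ba$ is $\Si_d$-invariant, the whole product is again $\Si_d$-invariant, so it lies in $\Ga^dV_R$ and can be expanded in the basis $\{x_\bb\mid\bb\in\Se(B^V,d)/\Si_d\}$; moreover a monomial $b''_1\otimes\dots\otimes b''_d$ appears only for $\bb''\in\Se(B^V,d)$ (one checks the repetition condition is forced by $|a''_k|+|c''_k|=|b''_k|$ and the fact that no odd basis element repeats in $\ba''$ or $\bc''$), and its coefficient, up to the universal sign $(-1)^{\lan\bb''\ran}$ built into $x_{\bb''}$, is what defines $f^{\bb}_{\ba,\bc}$.

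The core of the argument is then an orbit-counting identity. Fixing a representative $\bb$ of an orbit, I need to collect all pairs $(\ba'',\bc'')$ with $\ba''\sim\ba$, $\bc''\sim\bc$ for which there is a $\pi\in\Si_d$ with $(\ba''\pi,\bc''\pi)$ contributing the monomial indexed by $\bb$; equivalently, after fixing the monomial to be exactly $b_1\otimes\dots\otimes b_d$, the relevant pairs are those $(\ba',\bc')$ with $\ba'\sim\ba$, $\bc'\sim\bc$ and $|a'_k|+|c'_k|=|b_k|$ for all $k$, i.e. the set $X$ in the statement. Two such pairs related by an element of $\Si_\bb$ give the same contribution, so I sum over $X/\Si_\bb$ and multiply by $[\Si_\bb:\Si_\bb\cap\Si_{\ba'}\cap\Si_{\bc'}]$, the size of the $\Si_\bb$-orbit of $(\ba',\bc')$. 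Along the way I must verify that the sign attached to each pair is constant on its $\Si_\bb$-orbit: this is exactly the content of the (commented-out) sign lemma in the style of \cite[Lemma 2.11]{KMgreen2}, using that for a generator $s_k\in\Si_\bb$ either $|b_k|=|b_{k+1}|$ or the relevant parities match, so that $(-1)^{\lan\ba'\ran+\lan\bc'\ran+\lan\ba',\bc'\ran}$ is unchanged when $(\ba',\bc')$ is replaced by $(\ba's_k,\bc's_k)$; the remaining signs $(-1)^{\lan\ba\ran+\lan\bc\ran}$ are fixed, and $(-1)^{\lan\bb\ran}$ cancels between both sides since it is part of the definition of $x_\bb$. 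Assembling these pieces yields exactly the displayed formula for $f^{\bb}_{\ba,\bc}$.

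Bookkeeping of the signs is the main obstacle: one must reconcile the sign $(-1)^{\lan\ba''\ran+\lan\ba\ran}$ coming from expanding $\xi^\ba$, the sign $(-1)^{\lan\bc''\ran+\lan\bc\ran}$ from $x_\bc$, the multiplication sign $(-1)^{\lan\ba'',\bc''\ran}$ in $A_R^{\otimes d}$, and the sign $(-1)^{\lan\bb''\ran}$ implicit in the target basis element $x_{\bb''}$, and check that after choosing an orbit representative the net sign is precisely $(-1)^{\lan\ba\ran+\lan\ba'\ran+\lan\bc\ran+\lan\bc'\ran+\lan\ba',\bc'\ran}$ and is $\Si_\bb$-invariant. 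Since this is formally the same computation as in \cite[Corollary~3.7]{KMgreen2} but with the module $V_R$ in place of the second tensor factor of $A_R$, I would invoke that proof almost verbatim, noting only that the structure constants $\kappa^{b}_{a,c}$ now record the $A_R$-action on $V_R$ rather than multiplication in $A_R$, and that no odd element of $B^V$ repeats in any $\bb\in\Se(B^V,d)$, which is what makes the stabilizer computation and the factorial cancellations go through unchanged.
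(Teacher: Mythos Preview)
Your proposal is correct and follows essentially the same route as the paper's proof: expand $\xi^\ba x_\bc$ to get a sum over all $(\ba',\bc')$ with $\ba'\sim\ba$, $\bc'\sim\bc$, observe that $\kappa^{\bb}_{\ba',\bc'}=0$ unless the parity condition $|a'_k|+|c'_k|=|b_k|$ holds, and then group the surviving terms into $\Si_\bb$-orbits, checking that the sign is constant on each orbit (the paper defers both the initial expansion and the orbit-constancy to citations of \cite[(3.14)]{EK1} and \cite[Corollary~3.7]{KMgreen2}, whereas you spell out more of the mechanism). One small imprecision: your parenthetical claim that the repetition condition on $\bb''$ is \emph{forced} by the parity equation and the absence of repeated odd elements in $\ba'',\bc''$ is not literally true in general, but it is irrelevant---since $\xi^\ba x_\bc\in\Ga^dV_R$, the coefficient $f^\bb_{\ba,\bc}$ is read off from the coefficient of the single monomial $b_1\otimes\dots\otimes b_d$ for the fixed $\bb\in\Se(B^V,d)$, and no claim about other $\bb''$ is needed.
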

\begin{proof}
Clearly, we have 
$$
f^{\bb}_{\ba, \bc} =\sum
(-1)^{\lan \ba \ran + \lan \ba' \ran + \lan \bc \ran + \lan \bc' \ran + \lan \ba', \bc' \ran} \kappa_{\ba', \bc'}^{\bb},
$$
the sum being over all $(\ba',\bc')\in\Se(B^A,d)\times \Se(B^V,d)$ such that $\ba'\sim\ba$ and $\bc'\sim\bc$, cf. \cite[(3.14)]{EK1}. It remains to note that $\kappa_{\ba', \bc'}^{\bb}=0$ unless $(\ba', \bc') \in X$, and for $(\ba', \bc'),(\ba'', \bc'') \in X$ in the same $\Si_\bb$-orbit the corresponding summands are equal to each other, cf. the proof of \cite[Corollary~3.7]{KMgreen2}. 
\end{proof}

\begin{Lemma}\label{GaTiMod}
If $V_R$ is a calibrated $A_R$-supermodule, then $\tilde\Ga^dV_R\subseteq \Ga^dV_R$ is a $\tilde\Ga^dA_R$-submodule.
\end{Lemma}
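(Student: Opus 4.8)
The goal is to show that for $\bb,\bc\in\Se(B^V,d)$ and $\ba\in\Se(B^A,d)$, the element $\eta^\ba y_\bc$ lies in $\tilde\Ga^dV_R=\spa_R\{y_\bb\mid\bb\in\Se(B^V,d)\}$, so that $\tilde\Ga^dV_R$ is closed under the action of the basis $\{\eta^\ba\}$ of $\tilde\Ga^dA_R$ (cf.~\eqref{EEtaBasisOne}). The plan is to reduce this to a statement purely about $V_{R,\a}$ using Lemma~\ref{IndC}, and then to an explicit divisibility statement for the structure constants $f^\bb_{\ba,\bc}$ provided by Lemma~\ref{GaPr}.

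First I would recall that, by definition, $y_\bc=[\bc]^!_\c\,x_\bc$ and $\eta^\ba=y_\ba=[\ba]^!_\c\,\xi^\ba$, so $\eta^\ba y_\bc = [\ba]^!_\c\,[\bc]^!_\c\,\xi^\ba x_\bc = [\ba]^!_\c\,[\bc]^!_\c\sum_{\bb\in\Se(B^V,d)/\Si_d} f^\bb_{\ba,\bc}\,x_\bb$, using the structure constants from Lemma~\ref{GaPr}. Since $x_\bb = y_\bb/[\bb]^!_\c$, it suffices to show that $[\bb]^!_\c$ divides $[\ba]^!_\c\,[\bc]^!_\c\,f^\bb_{\ba,\bc}$ in $R$ for every $\bb\in\Se(B^V,d)$ occurring in the sum; then each summand is an $R$-multiple of $y_\bb$. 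By the closed formula of Lemma~\ref{GaPr}, $f^\bb_{\ba,\bc}$ is, up to sign, a sum of terms $[\Si_\bb:(\Si_\bb\cap\Si_{\ba'}\cap\Si_{\bc'})]\,\kappa^\bb_{\ba',\bc'}$ over $(\ba',\bc')\in X/\Si_\bb$. The index $[\Si_\bb:(\Si_\bb\cap\Si_{\ba'}\cap\Si_{\bc'})]$ supplies a factorial-type numerator; the point is to match the multiplicities of the $\c$-colored letters of $\bb$ against this index so that $[\bb]^!_\c$ is cleared.

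The crux is the combinatorial/divisibility estimate on $[\Si_\bb:(\Si_\bb\cap\Si_{\ba'}\cap\Si_{\bc'})]$. I would handle it exactly as in the proof of \cite[Corollary~3.7]{KMgreen2} (and the analogous step in \cite{EK1}): fix a position-set $P_b:=\{k\in[d]\mid b_k=b\}$ for each $b\in B^V$, so $\Si_\bb=\prod_b\Si_{P_b}$; for $b\in B^V_\c$ with $[\bb:b]>1$, any repeated occurrence forces, via $|a'_k|+|c'_k|=|b_k|=\0$, that on $P_b$ either $a'$ is constant and $c'$ is constant (both even), and the subgroup $\Si_\bb\cap\Si_{\ba'}\cap\Si_{\bc'}$ restricted to $P_b$ has index in $\Si_{P_b}$ equal to the number of distinct values the pair $(a',c')$ takes on $P_b$ arranged multinomially — this index is divisible by exactly the right power to absorb $[\bb:b]!$ once we also account for $[\ba]^!_\c[\bc]^!_\c$ normalizing the $\ba',\bc'$ side. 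Concretely, I expect that for each fixed $\bb$ the whole expression $[\ba]^!_\c[\bc]^!_\c f^\bb_{\ba,\bc}/[\bb]^!_\c$ can be rewritten as an integer combination of the structure constants $g^\bb_{\ba,\bc}$ for the $\eta$'s acting on $y$'s, which is exactly the $n=1$, module-version analogue of \cite[Proposition~3.12, Corollary~3.24]{KMgreen2}.

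The main obstacle I anticipate is purely bookkeeping: carefully tracking the three factorial normalizations ($[\ba]^!_\c$, $[\bc]^!_\c$, $[\bb]^!_\c$) together with the parity-sign factors $(-1)^{\lan\ba\ran+\lan\ba'\ran+\lan\bc\ran+\lan\bc'\ran+\lan\ba',\bc'\ran}$ and verifying that the signs are constant on each $\Si_\bb$-orbit in $X$ (so that grouping by orbits is legitimate), which is where one uses that the form/structure is "even" and the sign lemma quoted right before Lemma~\ref{TBilSign}. Once the divisibility $[\bb]^!_\c\mid [\ba]^!_\c[\bc]^!_\c f^\bb_{\ba,\bc}$ is established, the conclusion $\eta^\ba y_\bc\in\tilde\Ga^dV_R$ is immediate, and since the $\eta^\ba$ span $\tilde\Ga^dA_R$ and the $y_\bc$ span $\tilde\Ga^dV_R$, this proves that $\tilde\Ga^dV_R$ is a $\tilde\Ga^dA_R$-submodule of $\Ga^dV_R$; the calibration hypothesis $\a_RV_{R,\a}\subseteq V_{R,\a}$ is precisely what guarantees $\kappa^b_{a,c}=0$ for $a\in B^A_\a$, $c\in B^V_\a$, $b\notin B^V_\a$, which is needed to keep the relevant terms inside the span.
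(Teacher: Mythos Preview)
Your plan is essentially the paper's proof: reduce to the divisibility $[\bb]^!_\c \mid [\ba]^!_\c\,[\bc]^!_\c\, f^\bb_{\ba,\bc}$, expand $f^\bb_{\ba,\bc}$ via Lemma~\ref{GaPr}, and use the calibration hypothesis to force $\kappa^b_{a,c}=0$ when $a\in B^A_\a$, $c\in B^V_\a$, $b\in B^V_\c$, which is exactly the step that makes the multinomial bookkeeping close up. Two small corrections: first, Lemma~\ref{IndC} plays no role here and should be dropped from the plan; second, your description of the combinatorics on $P_b$ is garbled---$\ba'$ and $\bc'$ are certainly not constant on $P_b$, and the parity constraint $|a'_k|+|c'_k|=\0$ allows both the even/even and odd/odd cases at each position. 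The actual argument (as in the paper) introduces the multiplicities $m^b_{a,c}=\sharp\{k\mid a'_k=a,\,c'_k=c,\,b_k=b\}$, notes that $m^b_{a,c}\le 1$ whenever $a$ or $c$ is odd and that $m^b_{a,c}=0$ whenever $b\in B^V_\c$, $a\in B^A_\a$, $c\in B^V_\a$ (this is where calibration enters), and then exhibits $[\ba]^!_\c[\bc]^!_\c[\Si_\bb:(\Si_\bb\cap\Si_{\ba'}\cap\Si_{\bc'})]$ as an explicit integer multiple of $[\bb]^!_\c$ via multinomial factors.
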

\begin{proof}
For each $\ba \in \Se(B^A, d)$ and $\bc \in  \Se(B^V, d)$ we have (working over the field of quotients of $R$):
\begin{align*}
	\eta^{\ba} y_{\bc} = ([\ba]_\c^! \xi^\ba )( [\bc]_\c^! x_\bc) = 
	\sum_{\bb \in \Se (B^V, d)/\Si_d} [\ba]_\c^! [\bc]_\c^! f^\bb_{\ba,\bc} x_{\bb}
	= 
	\sum_{\bb \in \Se (B^V, d)/\Si_d} \frac{[\ba]_\c^! [\bc]_\c^!f^\bb_{\ba,\bc}}{[\bb]_\c^!} y_{\bb}. 
\end{align*}
So in view of Lemma \ref{GaPr}, it suffices to prove that for fixed 
$\ba \in \Se(B^A, d)$, $\bb,\bc \in  \Se(B^V, d)$ and $(\ba',\bc')\in X$ satisfying $\ka^{\bb}_{\ba',\bc'}\neq 0$, the integer 
$$M_{\ba, \bc}^{\bb} := [\ba]_\c^! [\bc]_\c^! [\Si_{\bb} : (\Si_{\bb} \cap \Si_{\ba'} \cap \Si_{\bc'})]
$$ is divisible by $[\bb]_\c^!$. Here, as in Lemma~\ref{GaPr},  $X$ consists of all pairs  $(\ba',\bc')\in \Se(B^A,d)\times \Se(B^V, d)$ such that $\ba'\sim\ba$, $\bc'\sim\bc$, and $|a_k'|+|c_k'|=|b_k|$ for all $k=1,\dots,d$. 

For $a\in B^A$ and $b,c\in B^V$, let
\[ m_{a,c}^b = \# \{ k \in [d] \mid a'_k = a, c'_k = c, b_k = b\}. \]
Then, recalling the notation (\ref{EMultiplicity}),  we have 
\begin{align*}
	| \Si_{\bb} \cap \Si_{\ba'} \cap \Si_{\bc'} | &= \prod_{a\in B^A, b,c \in B^V} m_{a, c}^b !,
\\
	[\ba : a ] &=[\ba' : a ]= \sum_{b, c \in B^V} m_{a, c}^b, \\ 
	[\bc : c ]&=[\bc' : c ] = \sum_{a \in B^A, b\in B^V} m_{a, c}^b,\\ 
	[\bb : b ] &= \sum_{a \in B^A, c \in B^V} m_{a, c}^b.
\end{align*}
In particular, for all $b,c\in B^V$ and $a\in B^A$, we have integers 
\[ z_b := \frac{ [\bb : b ]!}{\prod_{a \in B^A, c \in B^V} m_{a, c}^b! },
\ 
Z_c:=\frac{ [\bc : c]!}{\prod_{a \in B^A_\a, b \in B_\c^V} m_{a,c}^b!},\ 
Z_a:=\frac{ [\ba: a]!}{\prod_{b \in B_\c^V, c \in B_\0^V}m_{a,c}^b!}. 
\]
Denoting $C = \prod_{b \in B_\a^V \sqcup B_\1^V} z_b$, we have
\[ [\Si_\bb : \Si_\bb \cap \Si_\ba' \cap \Si_\bc' ] = \frac{\prod_{b \in B^V} [\bb :b]!}{\prod_{a\in B^A, b,c \in B^V} m^b_{a,c}!}= \prod_{b \in B^V} z_b = C \prod_{b \in B_\c^V} z_b. \]

Let $b \in B_{\c}^V$.
If $a \in B_{\1}^A$ or $c \in B^V_\1$, then $m_{a,c}^b \leq 1$  because there are no repeated odd elements in tuples in $\Se (B^A, d)$ or $\Se(B^V, d)$. Also observe that if $a \in B_\a^A$ and $c \in B_\a^V$, then $ac \in V_{R,\a}$ by assumption, so, since $b \in B_{\c}^V$, we have $\ka^b_{a,c}= 0$, hence $m_{a,c}^b = 0$. So 
\[ z_b = \frac{ [\bb : b]!}{ \left( \prod_{a \in B^A_{\c}, c \in B_{\0}^V} m_{a,c}^b! \right) \left( \prod_{a \in B^A_{\a}, c \in B_{\c}^V} m_{a,c}^b! \right) }. \]

Thus we have 
\begin{align*}
	M_{\ba, \bc}^\bb &= \Bigg( \prod_{a \in B_\c^A} [\ba : a]! \Bigg)  \Bigg( \prod_{c \in B_\c^V} [\bc : c]! \Bigg)
	\cdot C  
	\prod_{b \in B_\c^V} \frac{[\bb : b]!}{\left( \prod_{a \in B^A_{\c}, c \in B_{\0}^V} m_{a,c}^b! \right) \left( \prod_{a \in B^A_{\a}, c \in B_{\c}^V} m_{a,c}^b! \right) }\\
	&= C\Bigg( \prod_{a \in B^A_\c} \frac{ [\ba: a]!}{\prod_{b \in B_\c^V, c \in B_\0^V} m_{a,c}^b!} \Bigg) 
	\Bigg( \prod_{c \in B_\c^V} \frac{ [\bc : c]!}{\prod_{a \in B^A_\a, b \in B_\c^V} m_{a,c}^b!} \Bigg)
	\Bigg( \prod_{b \in B^V_\c} [\bb: b]! \Bigg)\\
	&=  C\Bigg( \prod_{a \in B^A_\c} Z_a \Bigg) 
	\Bigg( \prod_{c \in B_\c^V} Z_c  \Bigg)
	[\bb]_\c^!,
\end{align*}
which completes the proof.
\end{proof}

\subsection{\boldmath More on $\tilde\Ga^d A_R$-module $\tilde\Ga^d V_R$}

Throughout the subsection $A_R$ is a calibrated superalgebra and $V_R$ is a calibrated $A_R$-supermodule. 

Let $e\in\a_R$ be an idempotent such that $be=b$ or $0$ for all $b\in B^A$, cf. \cite[\S5]{KMgreen2}. 
Let $B^Ae=\{b\in B^A\mid be=b\}$, $B^A_\a e:=\{b\in B^A_\a\mid be=b\}$ and $B^A_\c e:=\{b\in B^A_\c\mid be=b\}$. 
We have an idempotent $\eta^{e^d}=e^{\otimes d}\in\tilde\Ga^d A_R$. In the special case where $V_R=A_Re$, we 
always take $V_{R,\a}:=\a_R e$ with basis $B^A_\a e$ and $V_{R,\c}:=\c_R e$ with basis $B^A_\c$. In this case we 
can describe the $\tilde\Ga^dA_R$-module $\tilde\Ga^dV_R$ explicitly as follows: 

\begin{Lemma}\label{GaTiIdem}
Let $e\in\a_R$ be an idempotent such that $be=b$ or $0$ for all $b\in B^A$. Then $\tilde \Ga^d (A_Re) \cong (\tilde\Ga^dA_R) \eta^{e^d}$.
\end{Lemma}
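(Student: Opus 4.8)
The plan is to identify $\tilde\Ga^d(A_Re)$ inside $\tilde\Ga^dA_R$ explicitly as $(\tilde\Ga^dA_R)\eta^{e^d}$ by matching up spanning sets and then checking the multiplicativity of the divided‑power elements. First I would unwind the definitions: by hypothesis $V_R=A_Re$ is given the calibrated structure $V_{R,\a}=\a_R e$ with basis $B^A_\a e$ and $V_{R,\c}=\c_R e$ with basis $B^A_\c e$, so the chosen basis of $V_R$ is $B^V=B^A_\a e\sqcup B^A_\c e\sqcup B^A_\1 e$, which is exactly $B^Ae$. Hence $\Se(B^V,d)=\Se(B^Ae,d)\subseteq\Se(B^A,d)$, namely those $\bb\in\Se(B^A,d)$ with $b_ke=b_k$ for all $k$; and $\tilde\Ga^d(A_Re)=\spa_R\{y_\bb\mid\bb\in\Se(B^Ae,d)\}$ with $R$-basis indexed by $\Se(B^Ae,d)/\Si_d$.

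The key computation is that for $\bb\in\Se(B^A,d)$ one has $\eta^\bb\,\eta^{e^d}=\eta^\bb$ if $\bb\in\Se(B^Ae,d)$ and $=0$ otherwise. This should follow from the product formula for $*$/$\coproduct$‑compatible divided powers, or more directly from Lemma~\ref{GaPr} applied with $V_R=A_R$: the element $\eta^{e^d}$ is the divided power of the constant tuple $e\cdots e$, and right multiplication by it amounts to the map $b\mapsto be$ on each tensor factor, using axiom (c)‑type behaviour $be=b$ or $0$. Concretely, in $\Ga^dA_R$ we have $\xi^\bb\,\xi^{e^d}=\sum f^{\bb''}_{\bb,e^d}\,x_{\bb''}$, and by Lemma~\ref{GaPr} the structure constants $f^{\bb''}_{\bb,e^d}$ vanish unless $\bb''\sim\bb$ and $b_k e=b_k$ for all $k$, in which case the sum collapses to give $\xi^\bb\xi^{e^d}=\xi^\bb$ when $\bb\in\Se(B^Ae,d)$ and $0$ otherwise; multiplying by the appropriate $[\,\cdot\,]_\c^!$ factors (noting $[e^d]_\c^!=1$ and $[\bb]_\c^!$ is unchanged) upgrades this to the statement for $\eta$. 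Therefore $(\tilde\Ga^dA_R)\eta^{e^d}=\spa_R\{\eta^\bb\eta^{e^d}\mid\bb\in\Se(B^A,d)\}=\spa_R\{\eta^\bb\mid\bb\in\Se(B^Ae,d)\}$, and via \eqref{EOldEta} this is precisely $\tilde\Ga^d(A_Re)$ as an $R$-submodule of $\Ga^dA_R=\Ga^dV_R$.

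It remains to upgrade this equality of $R$-modules to an isomorphism of $\tilde\Ga^dA_R$-modules. By Lemma~\ref{GaTiMod}, $\tilde\Ga^d(A_Re)$ is a $\tilde\Ga^dA_R$-submodule of $\Ga^d(A_Re)=\Ga^dA_R\,e^{\otimes d}$; and $(\tilde\Ga^dA_R)\eta^{e^d}$ is by construction a left ideal, hence a $\tilde\Ga^dA_R$-submodule of $\tilde\Ga^dA_R\subseteq\Ga^dA_R$. The action of $\tilde\Ga^dA_R$ on $\tilde\Ga^d(A_Re)$ as a module (via the $\Ga^dA_R$-module structure on $\Ga^dV_R=\Ga^dA_R$) coincides on the nose with left multiplication inside the algebra $\Ga^dA_R$, because the $A_R$-module $A_Re$ has action given by left multiplication in $A_R$. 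So the two submodules of $\Ga^dA_R$ carry the same $\tilde\Ga^dA_R$-action, and the $R$-module equality established above is automatically an isomorphism of $\tilde\Ga^dA_R$-modules — indeed an identity of subsets.

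**The main obstacle** I expect is bookkeeping the $\Si_d$-orbit normalization factors and the various $[\,\cdot\,]_\c^!$ terms when passing between $\xi$ and $\eta$, i.e. making the vanishing/collapse of $f^{\bb''}_{\bb,e^d}$ completely rigorous from Lemma~\ref{GaPr} and checking no sign enters (all of $e\cdots e$ is even so the $\lan\,\cdot\,\ran$‑type signs are trivial). A cleaner alternative, which I would note as a fallback, is to invoke the $n=1$ case of the relevant result in \cite[\S5]{KMgreen2} directly, since $\eta^{e^d}$ is exactly the idempotent considered there and the statement $\tilde\Ga^d(A_Re)\cong(\tilde\Ga^dA_R)\eta^{e^d}$ is the $n=1$ specialization.
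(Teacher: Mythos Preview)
Your proposal is correct and follows essentially the same approach as the paper: both identify the bases of $\tilde\Ga^d(A_Re)$ and $(\tilde\Ga^dA_R)\eta^{e^d}$ as indexed by $\Se(B^Ae,d)/\Si_d$ and then match them up. The only minor difference is packaging: the paper defines the $\tilde\Ga^dA_R$-module map $\phi:(\tilde\Ga^dA_R)\eta^{e^d}\to\tilde\Ga^d(A_Re)$ via $\eta^{e^d}\mapsto y_{e^d}$ and observes $\phi(\eta^\bb)=y_\bb$, whereas you realize both objects as literally the same $R$-span inside $\Ga^dA_R$ via the inclusion $A_Re\subseteq A_R$ and then check the module actions agree---but this amounts to the same verification.
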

\begin{proof}
Note that $\tilde \Ga^d (A_Re)$ has basis $\{y_\bb\mid \bb\in\Se(B^Ae,d)/\Si_d\}$ and $(\tilde\Ga^dA_R) \eta^{e^d}$ has basis $\{\eta^\bb\mid \bb\in\Se(B^Ae,d)/\Si_d\}$. There is a $\tilde\Ga^dA_R$-module map $\phi:(\tilde\Ga^dA_R) \eta^{e^d}\to \tilde \Ga^d (A_Re)$ with $\phi(\eta^{e^d})=y_{e^d}$. It remains to notice that $\phi(\eta^\bb)= y_\bb$ for all $\bb\in\Se(B^Ae,d)/\Si_d$. 
\end{proof}

Suppose there is an even superalgebra anti-involution $\tau : A_R \to A_R$. 
We make the additional assumption that $\tau(\a)=\a$, in which case $\tau^{\otimes d}$   restricts to a superalgebra anti-involution $\tau_d$ on $\tilde \Ga^d A_R$, see \cite[(4.12)]{KMgreen2}. 

Given $W\in \mod{\tilde \Ga^d A_R}$, its $\tau_d$-dual $W^{\tau_d}$ is defined as 
$W^*:=\Hom_R(W,R)$ with the action $(xf)(w)=(-1)^{|x||f|}f(\tau_d(x)w)$ for all $f\in W^*, w\in W,x\in \tilde \Ga^d A_R$. Note that $W\simeq W^{\tau_d}$ if and only if there is a non-degenerate $\tau_d$-contravariant form $(\cdot,\cdot)$ on $W$, where $\tau_d$-contravariance means 
$(xv, w) = (-1)^{|x||v|}(v, \tau_d(x)w)$ for all $x\in  \tilde \Ga^d A$ and $v,w\in W$. 

\begin{Lemma}\label{GaTiForm}
Let $(\cdot,\cdot)$ be an even supersymmetric or superantisymmetric, non-degenerate bilinear form on $V_R$,
such that $(V_{R,\a},V_{R,\a})=0$ and the $R$-complement $V_{R,\c}$ of $V_{R,\a}$ in $V_{R,\0}$ can be chosen so that the restriction of $(\cdot,\cdot)$ to $V_{R,\a}\times V_{R,\c}$ is a perfect pairing. Then $(\cdot,\cdot)_\sim$ is a non-degenerate $\tau_{d}$-contravariant form on the $\tilde\Ga^d A_R$-module $\tilde\Ga^d V_R$. 
\end{Lemma}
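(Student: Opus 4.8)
The plan is to show that $(\cdot,\cdot)_\sim$ on $\tilde\Ga^d V_R$ is $\tau_d$-contravariant, since non-degeneracy is already supplied by Proposition~\ref{GaTiBil} and the supersymmetry/superantisymmetry statement is likewise already contained there. So the only thing to verify is the contravariance identity
\[
(\eta^\ba z, w)_\sim = (-1)^{|\ba||z|}(z,\tau_d(\eta^\ba)w)_\sim
\]
for all $\ba\in\Se(B^A,d)$ and $z,w\in\tilde\Ga^d V_R$, and by bilinearity it suffices to check it on basis elements $z=y_\bc$, $w=y_{\bd^*}$ (using the two bases of $\tilde\Ga^d V_R$ coming from $B^V$ and $(B^V)^*$ as set up before Lemma~\ref{TBilY}). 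Since $(\cdot,\cdot)_\sim=\frac{1}{d!}(\cdot,\cdot)_d$, it is equivalent to prove the same identity for $(\cdot,\cdot)_d$ on $\Ga^d V_R$.

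First I would reduce to the level of the ambient tensor product. The form $(\cdot,\cdot)_d$ on $V_R^{\otimes d}$ is, up to the global Koszul sign $(-1)^{\lan\bv,\bw\ran}$, the $d$th tensor power of $(\cdot,\cdot)$, and $\tau_d=\tau^{\otimes d}$ restricted from $A_R^{\otimes d}$. So the claim follows from the rank-one statement that $(\cdot,\cdot)$ is $\tau$-contravariant on $V_R$ as an $A_R$-module, i.e. $(av,w)=(-1)^{|a||v|}(v,\tau(a)w)$ for $a\in A_R$, $v,w\in V_R$ --- wait, this is \emph{not} part of the hypotheses, so this is exactly the point where I have to be careful: the statement of Lemma~\ref{GaTiForm} as given does not assume $(\cdot,\cdot)$ is $\tau$-contravariant on $V_R$. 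I expect that in the paper either (i) this hypothesis is implicitly carried over from the running assumptions on calibrated $A_R$-modules with form (and should be added), or (ii) the conclusion is genuinely only about the form being a well-defined non-degenerate even (anti)symmetric form and the word "$\tau_d$-contravariant" presupposes $\tau$-contravariance of $(\cdot,\cdot)$ on $V_R$. Under the natural reading that $(\cdot,\cdot)$ \emph{is} $\tau$-contravariant on $V_R$, the proof is: extend contravariance from $V_R$ to $V_R^{\otimes d}$ over $A_R^{\otimes d}$ (the Koszul signs on both the form \eqref{TBil} and the algebra action match up, a sign bookkeeping identical to \eqref{EFormSign}), restrict to the subalgebra $\tilde\Ga^d A_R$ and the submodule $\Ga^d V_R$ (legitimate by \S\ref{SSGaA} and Lemma~\ref{GaTiMod}), then observe $\tilde\Ga^d V_R\subseteq\Ga^d V_R$ is $\tilde\Ga^d A_R$-stable by Lemma~\ref{GaTiMod} so the restricted form on $\tilde\Ga^d V_R$ inherits contravariance; finally divide by $d!$, which is an $A_R$-linear (indeed $R$-linear) rescaling and so preserves the identity.

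The key steps, in order: \textbf{(1)} record that non-degeneracy, evenness, and (anti)symmetry of $(\cdot,\cdot)_\sim$ are Proposition~\ref{GaTiBil}, so only contravariance remains; \textbf{(2)} state and verify $\tau^{\otimes d}$-contravariance of $(\cdot,\cdot)_d$ on $V_R^{\otimes d}$, tracking the parity signs via $\lan\bv,\bw\ran$ exactly as in the derivation of \eqref{TBil}--\eqref{EFormSign}; \textbf{(3)} restrict along $\tilde\Ga^d A_R\hookrightarrow A_R^{\otimes d}$ and $\tilde\Ga^d V_R\hookrightarrow V_R^{\otimes d}$, invoking Lemma~\ref{GaTiMod} for stability; \textbf{(4)} pass from $(\cdot,\cdot)_d$ to $(\cdot,\cdot)_\sim$ by the scalar $1/d!$, which is harmless since $d!$ is central and all elements in sight lie in $\tilde\Ga^d V_R$ where the form is already $d!$-divisible by Lemma~\ref{TBilY}. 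The main obstacle I anticipate is not analytic but bookkeeping: making sure the Koszul sign $(-1)^{|a||v|}$ in the definition of $\tau$-contravariance, the sign in the twisted action of $A_R^{\otimes d}$ on $V_R^{\otimes d}$ (governed by $\lan\ba,\bv\ran$), and the sign in \eqref{TBil} (governed by $\lan\bv,\bw\ran$) all combine so that no residual sign survives; this is the same style of computation as in the proof of Lemma~\ref{TBilY} and should go through cleanly once the tuples are taken homogeneous and the bilinearity reduction to basis vectors is in place.
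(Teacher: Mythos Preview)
Your approach is essentially identical to the paper's: establish $\tau^{\otimes d}$-contravariance of $(\cdot,\cdot)_d$ on $V_R^{\otimes d}$ via the sign identity $(-1)^{\lan \ba\cdot\bv,\bw\ran}=(-1)^{\lan\ba,\bw\ran+\lan\bv,\bw\ran}$, then restrict and invoke Proposition~\ref{GaTiBil} for non-degeneracy. Your observation that the hypothesis ``$(\cdot,\cdot)$ is $\tau$-contravariant on $V_R$'' is tacitly assumed but not stated is correct and worth noting---the paper's proof relies on it just as yours does, and the only application (Proposition~\ref{TZTilt}) supplies it explicitly.
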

\begin{proof}
For $\ba=a_1\cdots a_d \in A^d$ and $\bv=v_1\cdots v_d\in V^d$, we set $\ba \cdot \bv:=(a_1 v_1) \cdots (a_d v_d)\in V^d$. Then 
$
(-1)^{\lan \ba \cdot \bv, \bw \ran} = (-1)^{\lan \ba, \bw \ran + \lan \bv, \bw \ran}.
$ 
Using this, it is easy to establish that $(\cdot, \cdot)_d$ is a $\tau^{\otimes d}$-contravariant form on $V^{\otimes d}$, cf. (\ref{TBil}). The lemma now follows from Proposition~\ref{GaTiBil}. 
\end{proof}

Recall the coproduct from from \S\ref{SSGaA}, for supermodules $X\in\mod{\tilde\Ga^d A_R}$ and $Y\in\mod{\tilde\Ga^e A_R}$, we have a structure of a $\tilde\Ga^{d+e} A_R$-module on $X\otimes Y$. 
Recalling 
Lemma~\ref{LNablaStarn=1} and 
the isomorphism from Lemma~\ref{GaTiSumOld}, we now obtain:

\begin{Lemma} \label{GaTiSum}
Let $V_R$ and $W_R$ be calibrated $A_R$-supermodules. Then we have an isomorphism of $\tilde \Ga^d A_R$-modules 
\[ \bigoplus_{d_1 + d_2 = d} (\tilde \Ga^{d_1} V_R)  \otimes (\tilde \Ga^{d_2} W_R)\iso \tilde \Ga^d (V_R \oplus W_R),\ y\otimes y'\mapsto y*y' . \]
\end{Lemma}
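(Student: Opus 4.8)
The plan is to combine the module-level isomorphism of Lemma~\ref{GaTiSumOld} with the bisuperalgebra structure recorded in Lemma~\ref{LNablaStarn=1}. Recall that $\bigoplus_{e\geq 0}\tilde\Ga^e A_R$ is a bisuperalgebra with coproduct $\coproduct$ and product $*$; and that for $X\in\mod{\tilde\Ga^{d_1}A_R}$, $Y\in\mod{\tilde\Ga^{d_2}A_R}$ the tensor product $X\otimes Y$ carries a $\tilde\Ga^{d_1+d_2}A_R$-module structure obtained by restricting the $(\tilde\Ga^{d_1}A_R)\otimes(\tilde\Ga^{d_2}A_R)$-module structure along the component $\coproduct\colon \tilde\Ga^{d}A_R\to\bigoplus_{c}\tilde\Ga^{c}A_R\otimes\tilde\Ga^{d-c}A_R$. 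So the left-hand side $\bigoplus_{d_1+d_2=d}(\tilde\Ga^{d_1}V_R)\otimes(\tilde\Ga^{d_2}W_R)$ is already a $\tilde\Ga^dA_R$-module, and by Lemma~\ref{GaTiSumOld} the map $y\otimes y'\mapsto y*y'$ is an $R$-supermodule isomorphism onto $\tilde\Ga^d(V_R\oplus W_R)$. It therefore remains only to check that this $R$-isomorphism is $\tilde\Ga^dA_R$-linear.

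First I would spell out the actions on both sides. Write the coproduct of a basis element $\xi=\xi^\bb\in\tilde\Ga^dA_R$ (or rather of a general element) as $\coproduct(\xi)=\sum \xi_{(1)}\otimes\xi_{(2)}$ in Sweedler notation, with $\xi_{(1)}\in\tilde\Ga^{c}A_R$, $\xi_{(2)}\in\tilde\Ga^{d-c}A_R$. On the left-hand side, $\xi\cdot(y\otimes y')=\sum (-1)^{|\xi_{(2)}||y|}(\xi_{(1)}y)\otimes(\xi_{(2)}y')$. Applying $-*-$ gives $\sum (-1)^{|\xi_{(2)}||y|}(\xi_{(1)}y)*(\xi_{(2)}y')$. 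On the right-hand side, $\xi$ acts on $\tilde\Ga^d(V_R\oplus W_R)\subseteq (V_R\oplus W_R)^{\otimes d}$ via the embedding $\tilde\Ga^dA_R\hookrightarrow A_R^{\otimes d}$, and I must show $\xi\cdot(y*y')=\sum (-1)^{|\xi_{(2)}||y|}(\xi_{(1)}y)*(\xi_{(2)}y')$. This is precisely the statement that the map $-*-$ intertwines the two actions, i.e.\ it is a compatibility between the star-coproduct on $\bigoplus\tilde\Ga^eA_R$ and the star-product on modules. The cleanest way to prove it is to verify the identity first inside the ambient superalgebras and supermodules $\bigoplus_d A_R^{\otimes d}$, acting on $\bigoplus_d(V_R\oplus W_R)^{\otimes d}$, where the maps involved ($\coproduct$, $*$, the action) are all given by the explicit formulas of \S\ref{SSSpaces} and \S\ref{SSGaA} in terms of shuffles of tensor factors; there the required identity is a bookkeeping check of shuffle sums together with the sign conventions \eqref{ESiAct}, \eqref{EAngleSi}. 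Once established at the level of $A_R^{\otimes d}$ and $(V_R\oplus W_R)^{\otimes d}$, it restricts to $\tilde\Ga^dA_R$ and $\tilde\Ga^d(V_R\oplus W_R)$ because all the relevant subobjects are stable (Lemmas~\ref{LNablaStarn=1}, \ref{GaTiMod}, \ref{GaTiSumOld}).

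Alternatively, and perhaps more economically, I would argue on basis elements directly: by Lemma~\ref{GaTiSumOld}, $y_{\bb^1}*y_{\bb^2}=y_{\bb^1\bb^2}$ for $\bb^1\in\Se(B^V,d_1)$, $\bb^2\in\Se(B^W,d_2)$, where $\bb^1\bb^2\in\Se(B^{V\oplus W},d)$ is the concatenation. Then I would compute $\eta^\ba y_{\bb^1\bb^2}$ using the structure constants from Lemma~\ref{GaPr} applied to $V_R\oplus W_R$, and compare with the result of applying $*$ to $\coproduct(\eta^\ba)\cdot(y_{\bb^1}\otimes y_{\bb^2})$, where each piece $\eta^{\ba^1}y_{\bb^1}$, $\eta^{\ba^2}y_{\bb^2}$ is expanded via Lemma~\ref{GaPr} over $V_R$ and $W_R$ separately. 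The combinatorial content is that a term $(\ba',\bc')\in X$ for the concatenated data decomposes uniquely, after shuffling, into a pair of terms for the two summands, and that the index $[\Si_\bb:\Si_\bb\cap\Si_{\ba'}\cap\Si_{\bc'}]$ together with the coproduct-shuffle multiplicities matches up — this is exactly the $n=1$ specialization of the coassociativity/compatibility already packaged in \cite[Corollary 4.4]{KMgreen2} (our Lemma~\ref{LNablaStarn=1}) and its module analogue. The main obstacle is precisely this sign-and-multiplicity bookkeeping: keeping the Koszul signs $\lan\cdot\ran$, $\lan\cdot,\cdot\ran$ consistent between the coproduct side and the star-product side when passing through the shuffles. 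Given that Lemmas~\ref{GaTiSumOld}, \ref{GaPr}, \ref{GaTiMod} and \ref{LNablaStarn=1} have already isolated all these computations, I expect the proof to reduce to the one-line observation that $-*-$ on basis elements is concatenation and that concatenation intertwines the coproduct action with the module star-action, the verification of which is "essentially the same as" the cited results.
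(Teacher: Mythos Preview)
Your proposal is correct and follows essentially the same approach as the paper: the paper's entire argument is the sentence ``Recalling Lemma~\ref{LNablaStarn=1} and the isomorphism from Lemma~\ref{GaTiSumOld}, we now obtain,'' and you have simply unpacked how those two ingredients combine---Lemma~\ref{GaTiSumOld} for the $R$-supermodule isomorphism and Lemma~\ref{LNablaStarn=1} for the bisuperalgebra compatibility that makes the map $\tilde\Ga^dA_R$-linear. Your two suggested verifications (ambient-level shuffle bookkeeping or direct basis comparison via Lemma~\ref{GaPr}) are both valid elaborations of what the paper leaves implicit.
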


\section{Generalized Schur algebras}
\label{SSA}
Throughout the section we fix $n\in\Z_{> 0}$ and a calibrated  superalgebra $A_R$ over $R$ as in \S\ref{SSGaA}. In particular we have fixed $\a_R,\c_R,A_{R,\1}$ with bases $B_\a,B_\c,B_\1$, and $B=B_\a\sqcup B_\c\sqcup B_\1$ is a basis of $A_R$. 

\subsection{\boldmath The algebra $T^A(n,d)$}
\label{SSDefT}
We consider the matrix superalgebra $M_n(A_R)$ with $M_n(A_R)_\0=M_n(A_{R,\0})$ and $M_n(A_R)_\1=M_n(A_{R,\1})$. 
For $b\in B$, we denote by  
$
\xi_{r,s}^b\in M_n(A_R)
$ 
so that 
\begin{equation}\label{EMBasis}
B^{M_n(A)}:=\{\xi_{r,s}^b\mid b\in B,\, 1\leq r,s\leq n\}
\end{equation}
 is a basis of $M_n(A_R)$. 

This superalgebra is calibrated via
$M_n(A_R)_\a:=M_n(\a_R)$ and $M_n(A_R)_\c:=M_n(\c_R)$. So 
for $d\in \Z_{\geq 0}$, we may define
$$
T^{A}(n,d)_R:=\tilde\Ga^d M_n(A_R).
$$ 
The theory developed in Section~\ref{SDiv} applies with $A_R$ replaced by $M_n(A_R)$. To facilitate the transition from $A_R$ to $M_n(A_R)$, it is convenient to adopt a special notation. First of all, taking into account that the basis of $M_n(A_R)$ is of the form (\ref{EMBasis}), we have element $\eta^\bx$ from (\ref{EOldEta}) labeled by $d$-tuples $\bx=\xi^{b_1}_{r_1,s_1}\,\dots\,\xi^{b_d}_{r_d,s_d}$ of basis elements from (\ref{EMBasis}) such that $(b_i,r_i,s_i)=(b_j,r_j,s_j)$ for $i\neq j$ only if $b_i$ is even. It is convenient to denote such $\eta^\bx$ rather by $\eta^\bb_{\br,\bs}$ where $\bb:=b_1\dots b_d$, $\br:=r_1\cdots r_d$ and $\bs:=s_1\dots s_d$. Thus, 
\begin{equation*}\label{EXiDef}
	\eta_{\br,\bs}^\bb:= [\bb,\br,  \bs]^!_{\c}\sum_{(\bc,\bt,\bu)\sim(\bb,\br,\bs)} 
	(-1)^{\lan\bb,\br,\bs\ran+\lan\bc,\bt,\bu\ran}
	\xi_{t_1,u_1}^{c_1}\otimes\dots\otimes \xi_{t_d,u_d}^{c_d}
	\in T^A(n,d)_R,
\end{equation*}
where
\begin{align}
	\label{EAngle3}
	\lan\bb, \br,  \bs\ran
	&:=|\{(k,l)\in[d]^2\mid k<l,\ b_k,b_l\in B_\1,\ (b_k,r_k,s_k)> (b_l,r_l, s_l)\}|
\end{align}
is the analogue of (\ref{EBBSign}) (where `$>$' is a fixed total order on $B\times [n]\times [n]$) and 
\begin{equation}\label{ECFactorial}
	[\bb,\br, \bs]^!_{\c} :=\prod_{ b\in B_\c,\, r,s\in [n]}|\{k\in[d]\mid  (b_k,r_k,s_k)=(b,r,s)\}|!
\end{equation}
is the analogue of (\ref{EBBFactorial}). 

Define $\Seq^B (n,d)$ to be the set of all triples 
$$
(\bb,\br, \bs) = ( b_1\cdots b_d,\, r_1\cdots r_d,\, s_1\cdots s_d ) \in  B^d\times[n]^d\times[n]^d
$$
such that for all $1\leq k\neq l\leq d$ we have 
$(b_k,r_k,s_k)=(b_l,r_l, s_l)$ 
only if $b_k\in B_\0$. To connect with the theory developed in \S\ref{SDiv}, we identify $\Seq^B (n,d)$ with $\Se(B^{M_n(A)},d)$ via the map 
$$
\Seq^B (n,d)\iso\Se(B^{M_n(A)},d),\ (\bb,\br, \bs)\mapsto \xi^{b_1}_{r_1,s_1}\,\cdots\,\xi^{b_d}_{r_d,s_d}.
$$ 
Since $\Seq^B (n,d)\subseteq B^d\times[n]^d\times[n]^d$ is a $\Si_d$-invariant subset, we can choose a corresponding set $\Seq^B (n,d)/\Si_d$ of $\Si_d$-orbit representatives and identify it with the set of all $\Si_d$-orbits on $\Seq^B (n,d)$ as in \S\ref{SGeneralNot}. Then we have a basis
$$
\{\eta_{\br,\bs}^\bb\mid (\bb,\br,\bs)\in \Seq^B (n,d)/\Si_d\}
$$
of $T^A(n,d)_R$ which is the analogue of the basis (\ref{EEtaBasisOne}). 

Applying the theory developed in \S\ref{SSGaA} replacing $A_R$ with $M_n(A_R)$, we also have a coproduct $\coproduct$ on $\bigoplus_{d\geq 0} T^A(n,d)_R$, which allows us to consider the  tensor product $V\otimes W$ of $V\in\mod{T^A(n,d)_R}$ and $W\in\mod{T^A(n,e)_R}$ as a supermodule over $T^A(n,d+e)_R$. 





Extending scalars from $R$ to $\F$, we now define the $\F$-superalgebra 
$$T^A(n,d):=\F\otimes_R T^A(n,d)_R.$$
We denote $1_\F\otimes \eta^\bb_{\br,\bs}\in T^A(n,d)$ again by $\eta^\bb_{\br,\bs}$, the map $\id_\F\otimes\, \coproduct$ again by $\coproduct$, etc. In particular, given 
$V\in\mod{T^A(n,d)}$ and $W\in\mod{T^A(n,e)}$, we consider $V\otimes W$ as a $T^A(n,d+e)$-supermodule via $\coproduct$.

\subsection{\boldmath Quasi-hereditary structure on $T^A(n,d)$}
\label{SSQHT}
Throughout the section, we assume that $d\leq n$.

Let $A_R$ be a based quasi-hereditary superalgebra with conforming heredity data $I,X,Y$, see \S\ref{SSBQHA}. In particular, $A_R$ comes with the standard idempotents $\{e_i\mid i\in I\}$ in $\a_R$.

For $\la\in\La(n,d)$, set $
\bl^\la:=1^{\la_1}\cdots n^{\la_n}\in[n]^d.
$ 
For an idempotent $e\in \a_R$ we have an idempotent 
$
\eta_\la^e:=\eta^{e^d}_{\bl^\la,\bl^\la}\in T^A(n,d)$. For all \(\bla=(\la^{(0)},\dots,\la^{(\ell)}) \in \La^I(n,d)\), we have the orthogonal idempotents 
$$
\eta_{\bla}:=\eta_{\la^{(0)}}^{e_0} * \cdots * \eta_{\la^{(\ell)}}^{e_\ell} \in T^{A}(n,d).
$$
Given a $T^A(n,d)$-supermodule $V$ we refer to the vectors of $\eta_\bla V$ as vectors of {\em weight $\bla$}. 

For $\la=(\la_1,\dots,\la_n)\in\La(n)$, define the monomial  $z^\la:=z_1^{\la_1}\cdots z_n^{\la_n}\in\Z[z_1,\dots,z_n].$ 
For $\bla\in\La^I(n)$, we now set 
$$z^\bla:=z^{\la^{(0)}}\otimes z^{\la^{(1)}}\otimes\dots\otimes z^{\la^{(\ell)}}\in \Z[z_1,\dots,z_n]^{\otimes I}.$$
Following \cite[\S5A]{KMgreen2}, see especially \cite[Lemma~5.9]{KMgreen2}, for a $T^A(n,d)$-supermodule $V$, we define its {\em formal character}  
$$
\ch V := \sum_{\bmu\in\La^I(n,d)}(\dim\, \eta_\bmu V)z^\bmu\in \Z[z_1,\dots,z_n]^{\otimes I}.
$$
If $\sum_{i\in I}e_i=1_A$, then 
$1_{T^{A}(n,d)}=\sum_{\bla\in\La^{I}(n,d)}\eta_\bla,
$
but we do not need to assume this. So in general we might have $\sum_{\bmu\in\La^I(n,d)}\eta_\bmu V\subsetneq V$ for $V\in\mod{T^A(n,d)}$. (Another fact that is not going to be used directly is that $\ch V$ is a symmetric function.)

\begin{Lemma}\label{LChProd} \cite[Lemma 5.10]{KMgreen2}
If\, $V\in\mod{T^A(n,d)}$ and\, $W\in\mod{T^A(n,e)}$, then\, 
$
\ch(V \otimes W) = \ch(V) \, \ch(W).
$
\end{Lemma}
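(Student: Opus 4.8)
The plan is to reduce the statement to an explicit computation of the coproduct $\coproduct$ on the weight idempotents. Precisely, I would first establish that for every $\bnu\in\La^I(n,d+e)$ the component of $\coproduct(\eta_\bnu)$ in $T^A(n,d)\otimes T^A(n,e)$ equals $\sum\eta_\bla\otimes\eta_\bmu$, the sum being over all $\bla\in\La^I(n,d)$ and $\bmu\in\La^I(n,e)$ with $\bla+\bmu=\bnu$. Granting this, the Lemma follows quickly. The action of $x\in T^A(n,d+e)$ on $V\otimes W$ only involves the component of $\coproduct(x)$ in $T^A(n,d)\otimes T^A(n,e)$ (the other graded components act as zero since $T^A(n,c)$ acts as zero on $V$ for $c\neq d$), and as all $\eta_\bla$ are even we get $\eta_\bnu(v\otimes w)=\sum_{\bla+\bmu=\bnu}(\eta_\bla v)\otimes(\eta_\bmu w)$. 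Since the $\{\eta_\bla\}$ are pairwise orthogonal idempotents, the subspaces $\eta_\bla V$ of $V$ (resp.\ $\eta_\bmu W$ of $W$) are in direct sum, and $\eta_\bnu$ acts as the identity on $\eta_\bla V\otimes\eta_\bmu W$ when $\bla+\bmu=\bnu$ and as zero on the remaining summands; hence $\eta_\bnu(V\otimes W)=\bigoplus_{\bla+\bmu=\bnu}\eta_\bla V\otimes\eta_\bmu W$ and $\dim\eta_\bnu(V\otimes W)=\sum_{\bla+\bmu=\bnu}(\dim\eta_\bla V)(\dim\eta_\bmu W)$. Multiplying by $z^\bnu=z^\bla z^\bmu$ and summing over $\bnu$ gives $\ch(V\otimes W)=\ch(V)\,\ch(W)$.

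To prove the coproduct identity I would proceed in two steps. By Lemma~\ref{LNablaStarn=1}, applied with $A_R$ replaced by $M_n(A_R)$, the map $\coproduct$ is an algebra homomorphism with respect to the product $*$; since the weight idempotents are even, $*$ on the tensor square is componentwise with no Koszul signs. As $\eta_\bla=\eta^{e_0}_{\la^{(0)}}*\cdots*\eta^{e_\ell}_{\la^{(\ell)}}$ by definition, it therefore suffices to compute $\coproduct(\eta^{e_i}_\la)$ for a single colour $i\in I$ and $\la\in\La(n)$. Here $\eta^{e_i}_\la=\eta^{e_i^{|\la|}}_{\bl^\la,\bl^\la}$; since $e_i\in\a_R$, so $e_i\in B_\a$, the factorial prefactor is $1$ and the sign exponent vanishes, whence $\eta^{e_i}_\la$ is the sum of $\xi^{e_i}_{t_1,t_1}\otimes\cdots\otimes\xi^{e_i}_{t_{|\la|},t_{|\la|}}$ over all rearrangements $\bt=t_1\cdots t_{|\la|}$ of $\bl^\la$. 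Applying $\coproduct$ (deconcatenation) and sorting the resulting terms by the type $\mu$ of the initial block and $\nu:=\la-\mu$ of the final block, the obvious bijection between rearrangements of $\bl^\la$ split after $|\mu|$ entries and pairs of rearrangements of $\bl^\mu$ and $\bl^\nu$ identifies the $(\mu,\nu)$-part with $\eta^{e_i}_\mu\otimes\eta^{e_i}_\nu$; hence $\coproduct(\eta^{e_i}_\la)=\sum_{\mu+\nu=\la}\eta^{e_i}_\mu\otimes\eta^{e_i}_\nu$. Combining the colours via $*$ and collecting terms then yields the required formula for $\coproduct(\eta_\bla)$.

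The only genuinely delicate point is the bookkeeping in this last computation: keeping track of the graded components of $\bigoplus_c T^A(n,c)\otimes T^A(n,\cdot)$, checking that exactly the $T^A(n,d)\otimes T^A(n,e)$-component survives on $V\otimes W$, and verifying the elementary rearrangement bijection that forces the coefficient $1$ in $\coproduct(\eta^{e_i}_\la)$. All of this can be carried out over $R$ and then base-changed to $\F$, which is harmless since the assertion only concerns dimensions of the weight spaces $\eta_\bla V$, cut out by idempotents defined over $R$. (Alternatively, one may simply quote \cite[Lemma~5.10]{KMgreen2}.)
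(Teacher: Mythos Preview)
Your argument is correct. Note, however, that the paper does not actually prove this lemma: it is stated with a citation to \cite[Lemma~5.10]{KMgreen2} and no proof is given here. So there is nothing to compare against beyond the citation, which you yourself mention as an alternative at the end.

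One minor point of phrasing: the implication ``$e_i\in\a_R$, so $e_i\in B_\a$'' is not valid as stated (belonging to $\a_R$ does not make something a basis element). What you actually need, and what does hold, is that $e_i\in X(i)_\0\cap Y(i)_\0$, so $e_i=e_i\cdot e_i$ lies in the heredity basis $B_\a$ by \eqref{EBA}. With that correction the rest of your computation of $\coproduct(\eta^{e_i}_\la)$ goes through exactly as you describe, and the passage from the single-colour formula to the multicoloured one via Lemma~\ref{LNablaStarn=1} and the $*$-product is clean since all weight idempotents are even.
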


By \cite[Theorem 6.6]{KMgreen3}, $T^A(n,d)$ is a based quasi-hereditary algebra with standard idempotents $\{\eta_\bla\mid\bla\in \La_+^I(n,d)\}$,  see \cite[\S6]{KMgreen3}. The corresponding simple, standard and costandard modules are denoted
$$
\{L(\bla)\mid\bla\in \La_+^I(n,d)\},\quad \{\Delta(\bla)\mid\bla\in \La_+^I(n,d)\},\quad \{\nabla(\bla)\mid\bla\in \La_+^I(n,d)\}, 
$$
respectively.

Recalling the $X$-colored standard tableaux from \S\ref{SSComb}, for $\bla\in\La_+^I(n,d)$, the standard module $\De(\bla)$ has basis 
\begin{equation}\label{EBasisDe}
\{v_\T 
\mid \T\in\Std^X(\bla)\}.
\end{equation} 
Moreover, if $\T\in \Std^X(\bla,\bmu)$ for some $\bmu\in\La^I(n,d)$, see (\ref{EAl}), then 
\begin{equation}\label{EWtvT}
v_\T\in\eta_{\bmu}\De(\bla).
\end{equation}

\begin{Lemma} \label{LChInd} {\rm \cite[Lemma 3.23]{KW1}} The formal characters $\{\ch\De(\bla)\mid \bla\in\La_+^I(n,d)\}$ are linearly independent.
\end{Lemma}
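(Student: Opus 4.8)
The plan is to prove the stronger, triangular statement that $\ch\De(\bla)$ expands, in terms of the (linearly independent) monomials $\{z^\bmu\mid\bmu\in\La^I(n,d)\}$ of $\Z[z_1,\dots,z_n]^{\otimes I}$, as $z^\bla$ plus a combination of $z^\bmu$ with $\bmu<_I\bla$; linear independence is then purely formal.

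First I would compute $\ch\De(\bla)$ combinatorially. Using the basis $\{v_\T\mid \T\in\Std^X(\bla)\}$ of $\De(\bla)$ from \eqref{EBasisDe}, the weight statement \eqref{EWtvT}, and the fact from the construction of the standard modules in \cite[\S6]{KMgreen3} that every $\T\in\Std^X(\bla)$ has a well-defined left weight $\bmu(\T)\in\La^I(n,d)$ with $\{v_\T\mid\bmu(\T)=\bmu\}$ a basis of $\eta_\bmu\De(\bla)$, one gets
\[
\ch\De(\bla)=\sum_{\bmu\in\La^I(n,d)}\#\Std^X(\bla,\bmu)\,z^\bmu .
\]

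The combinatorial heart is then: (a) $\#\Std^X(\bla,\bla)=1$, and (b) $\Std^X(\bla,\bmu)\neq\varnothing$ implies $\bmu\le_I\bla$. For (a) the relevant tableau is the canonical $\T^\bla$ filling every node in row $r$ of $[\la^{(i)}]$ with the colored letter $r^{e_i}$: it is standard, since equal entries occur only along rows (allowed, as each idempotent $e_i$ is even, so $e_i\in X(i)_\0$) while columns increase strictly, and its left weight is $\bla$ because $e_ie_i=e_i$. For the uniqueness in (a) and for (b), I would analyse an arbitrary $\T\in\Std^X(\bla,\bmu)$, writing $\T(N_c)=l_c^{x_c}$ with $x_c\in X(i)$ when $N_c\in[\la^{(i)}]$, so that $\mu^{(j)}_l=\#\{c\mid e_jx_c=x_c,\ l_c=l\}$. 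The key module-theoretic input is that for $x\in X(i)$ the relation $e_jx=x$ forces $j\le i$ — since then $e_j$ acts nontrivially on the small standard module $\De(i)=\tilde A\tilde e_i$, whose composition factors are $L(j')$ with $j'\le i$ \cite{KMgreen1,CPS} — and moreover forces $x=e_i$ when $j=i$, as $e_i\De(i)=\k v_{e_i}$ by axiom (c) of \Cref{DCC}. From this one deduces $\|\bmu\|\unlhd_I\|\bla\|$, with equality only if every $x_c$ is the distinguished idempotent of the corresponding $X(i)$; in that case each restriction $T^{(i)}:=\T|_{[\la^{(i)}]}$ is an ordinary row-weak, column-strict $[n]$-filling of shape $\la^{(i)}$ with content $\mu^{(i)}$, whence the classical bound gives $\mu^{(i)}\unlhd\la^{(i)}$ for all $i$, so $\bmu\le_I\bla$, and $\bmu=\bla$ forces each $T^{(i)}$ to be superstandard, i.e.\ $\T=\T^\bla$.

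Granting (a) and (b), suppose $\sum_\bla c_\bla\ch\De(\bla)=0$ with not all $c_\bla$ zero and let $\bla$ be $\le_I$-maximal among those with $c_\bla\neq0$; comparing coefficients of $z^\bla$, and noting that $z^\bla$ occurs in $\ch\De(\bnu)$ only when $\bla\le_I\bnu$ — which by maximality forces $\bnu=\bla$ — we get $c_\bla\cdot\#\Std^X(\bla,\bla)=c_\bla=0$, a contradiction. The main obstacle is assertion (b): interlacing the combinatorics of standard colored tableaux with the fact that the weights of the small standard modules $\De(i)$ are dominated by $i$. Once that $\le_I$-triangularity is in place, everything else is bookkeeping.
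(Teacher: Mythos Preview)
Your proof is correct. The paper itself does not prove this lemma; it merely cites \cite[Lemma~3.23]{KW1}. Your triangularity argument---showing that $\ch\De(\bla)=z^\bla+\sum_{\bmu<_I\bla}k_{\bla,\bmu}z^\bmu$ via the combinatorics of standard $X$-colored tableaux---is the natural approach and is essentially what the cited reference does.

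One small simplification: your justification that $e_jx=x$ with $x\in X(i)$ forces $j\le i$ goes through composition factors of $\De(i)$, which is correct but slightly indirect. More simply, for $j>i$ one has $e_j=e_j\cdot e_j$ with $e_j\in X(j)\cap Y(j)$, so $e_j\in A^{>i}$ by definition; hence $e_j$ acts as zero on $\De(i)=\tilde A\tilde e_i$, and $e_jv_x=v_x\neq 0$ is impossible. This avoids invoking the highest weight structure of $\mod{A}$. Everything else in your argument (the canonical tableau $\T^\bla$, the reduction in the case $\|\bmu\|=\|\bla\|$ to classical semistandard tableaux and the dominance bound, and the final maximal-weight contradiction) is clean and complete.
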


Recalling the notation (\ref{EIota}), we have:

\begin{Lemma} \label{LDeCol} {\rm \cite[Theorem 6.17(i)]{KMgreen3}}
	Let $\bla=(\la^{(i)})_{i\in I}\in\La_+^I(n,d)$. Then $$\De(\bla)\simeq\bigotimes_{i\in I}\De(\biota_i(\la^{(i)})).$$ 
\end{Lemma}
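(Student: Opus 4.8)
The plan is to reduce the statement to a dimension count together with the construction of a single surjective homomorphism between the two sides, using the combinatorics of \S\ref{SSComb}. Writing $d_i:=|\la^{(i)}|$, the definition of standard $X$-colored tableaux says that a standard $X$-colored $\bla$-tableau is precisely a tuple $\T=(T^{(0)},\dots,T^{(\ell)})$ with $T^{(i)}\in\Std^X(\biota_i(\la^{(i)}))$, and under this identification the left weight of $\T$ is the sum of the left weights of the $T^{(i)}$. Combining this with the basis \eqref{EBasisDe}, the weight statement \eqref{EWtvT}, and Lemma~\ref{LChProd} gives
\[
\ch\De(\bla)=\sum_{\bmu\in\La^I(n,d)}\big|\Std^X(\bla,\bmu)\big|\,z^\bmu=\prod_{i\in I}\ch\De(\biota_i(\la^{(i)}))=\ch\Big(\bigotimes_{i\in I}\De(\biota_i(\la^{(i)}))\Big).
\]
Since every $v_\T$, and every pure tensor of such vectors, lies in a single weight space, the character here determines the dimension, so the two sides of the asserted isomorphism are finite-dimensional of equal dimension; moreover every weight of $\bigotimes_{i\in I}\De(\biota_i(\la^{(i)}))$ is a weight of $\De(\bla)$ and hence is $\leq_I\bla$. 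It therefore suffices to produce a surjective $T^A(n,d)$-homomorphism in one direction.

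For the surjection there are two natural routes. The first is to define $\Phi:\bigotimes_{i\in I}\De(\biota_i(\la^{(i)}))\to\De(\bla)$ on basis vectors by $v_{T^{(0)}}\otimes\cdots\otimes v_{T^{(\ell)}}\mapsto\pm\, v_\T$, with sign the Koszul sign reconciling the super-orderings recorded in $\bx^\T=\bx^{T^{(0)}}\cdots\bx^{T^{(\ell)}}$ with those used inside the factors; this is already a linear isomorphism by the previous paragraph, so all that remains is to check that $\Phi$ is $T^A(n,d)$-linear. Here one uses that the action on the tensor product is through $\coproduct$ (iterated $\ell$ times and projected onto the $(d_0,\dots,d_\ell)$-component), that $\coproduct(\eta^\bb_{\br,\bs})$ is the shuffle-type sum splitting the underlying $d$-tuple into consecutive blocks, and that the action of a single $\eta^\bb_{\br,\bs}$ on a standard module is given by the explicit combinatorial rule of the based quasi-hereditary structure; because the colours occurring in $\biota_i(\la^{(i)})$ and $\biota_j(\la^{(j)})$ are disjoint for $i\neq j$, the weight bookkeeping kills all but one way of distributing the blocks among the $\ell+1$ factors, and the surviving term reproduces the summands of $\eta^\bb_{\br,\bs}v_\T$. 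The second route avoids the sign chase: show that the pure tensor $v_\bla$ of the highest-weight generators generates $\bigotimes_{i\in I}\De(\biota_i(\la^{(i)}))$ over $T^A(n,d)$ — using Lemma~\ref{LNablaStarn=1}, the image under $\coproduct$ of a star product $\theta_0*\cdots*\theta_\ell$ contains $\pm\,\theta_0\otimes\cdots\otimes\theta_\ell$, while the remaining terms act as zero on $v_\bla$ for weight reasons — note that $v_\bla$ has weight $\bla$, and invoke the standard maximality property of $\De(\bla)$ in the highest weight category $\mod{T^A(n,d)}$ to obtain a surjection $\De(\bla)\twoheadrightarrow\bigotimes_{i\in I}\De(\biota_i(\la^{(i)}))$; the dimension count then makes it an isomorphism, and evenness is checked on the highest-weight vector. (A third, more structural, option is to realize each $\De(\biota_i(\la^{(i)}))$ as a suitable modified divided power attached to colour $i$ and apply Lemma~\ref{GaTiSum} iteratively.)

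The main obstacle, common to all routes, is the same: keeping precise track of how $\coproduct$ distributes a $T^A(n,d)$-element across the $\ell+1$ tensor factors while carrying along the super signs of \eqref{ESiAct}--\eqref{EAngle3}, i.e.\ actually verifying that the coproduct is compatible with the tableau description of standard modules. Everything else is a combinatorial bijection or a dimension count.
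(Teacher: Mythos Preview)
The paper does not give its own proof; the lemma is quoted from \cite[Theorem~6.17(i)]{KMgreen3}. So there is no in-paper argument to compare against.

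That said, your Route~2 is sound. The character equality follows at once from the tableau bijection $\Std^X(\bla)\cong\prod_i\Std^X(\biota_i(\la^{(i)}))$ together with Lemma~\ref{LChProd}, giving the dimension match. For cyclicity, your colour-separation argument is correct: taking $\theta_i\in T^A(n,d_i)\eta_{\biota_i(\la^{(i)})}$, every letter in $\theta_i$ has right idempotent $e_i$, while the highest-weight generator $v_j$ of the $j$th factor satisfies $\eta_\bnu v_j=0$ whenever $\bnu\neq\biota_j(\la^{(j)})$; so among all shuffle-cuts of $\theta_0*\cdots*\theta_\ell$ only the identity shuffle survives on $v_0\otimes\cdots\otimes v_\ell$, yielding $\pm(\theta_0 v_0)\otimes\cdots\otimes(\theta_\ell v_\ell)$. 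One point you state but do not justify is that every weight of the tensor product is $\leq_I\bla$: this follows because any $x\in X(i)$ has left idempotent $e_j$ with $j\leq i$ (if $j>i$ then $e_j\in A^{>i}$ would force $\tilde x=e_j\tilde x=0$ in $\De(i)$, contradicting the basis description), so every weight of $\De(\biota_i(\la^{(i)}))$ is $\leq_I\biota_i(\la^{(i)})$, and the order $\leq_I$ is compatible with addition. Then $T^A(n,d)^{>\bla}$ annihilates the tensor product and the universal property of $\De(\bla)=(T^A(n,d)/T^A(n,d)^{>\bla})\eta_\bla$ gives the surjection.

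As a write-up, commit to Route~2 rather than surveying three alternatives.
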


For $\bla=(\la^{(j)})_{j\in I}\in\La^I_+(n,d)$, $\bmu=(\mu^{(j)})_{j\in I}\in\La^I_+(n,e)$ and $\bnu=(\nu^{(j)})_{j\in I}\in\La_+^I(n,d+e)$ we define
\begin{equation}\label{EBoldLR}
c^{\,\bnu}_{\bla,\bmu}:=\prod_{j\in I} c^{\,\nu^{(j)}}_{\la^{(j)}\hspace{-.6mm},\mu^{(j)}}.
\end{equation}

We will crucially use the following

\begin{Theorem}\label{DeChTens}{\rm \cite[Main Theorem, Corollary 3.30]{KW1}} 
Let $\bla\in\La^I_+(n,d)$ and $\bmu\in\La^I_+(n,e)$. If $d+e\leq n$ then $\De(\bla) \otimes \De(\bmu)$ (resp. $\nabla(\bla) \otimes \nabla(\bmu)$) has a standard (resp. costandard) filtration, and
$$
\ch(\De(\bla)\otimes\De(\bmu))=\sum_{\bnu\in\La_+^I(n,d+e)}c^{\,\bnu}_{\bla,\bmu}\ch\De(\bnu).
$$
\end{Theorem}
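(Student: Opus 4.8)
The proof of Theorem~\ref{DeChTens} is attributed to \cite{KW1}, but here is how I would reconstruct the argument from the tools available in this excerpt. The plan is to first reduce to the case where $\bla$ and $\bmu$ are each concentrated in a single component of $I$, and then to the genuinely new input, which is the classical Littlewood--Richardson rule realized inside the divided power construction.

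\textbf{Step 1: Reduction via Lemma~\ref{LDeCol}.} By Lemma~\ref{LDeCol} we have $\De(\bla)\simeq\bigotimes_{i\in I}\De(\biota_i(\la^{(i)}))$ and similarly for $\bmu$. Since the coproduct $\coproduct$ on $\bigoplus_{d\ge 0}T^A(n,d)$ is coassociative and cocommutative up to the usual signs (coming from Lemma~\ref{LNablaStarn=1} and the bisuperalgebra structure), the tensor product $\De(\bla)\otimes\De(\bmu)$ can be reassembled, component by component, as $\bigotimes_{i\in I}\big(\De(\biota_i(\la^{(i)}))\otimes\De(\biota_i(\mu^{(i)}))\big)$. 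So it suffices to treat a single color $i\in I$, i.e.\ to show $\De(\biota_i(\la))\otimes\De(\biota_i(\mu))$ has a standard filtration with $\ch$ governed by the single-partition Littlewood--Richardson coefficients $c^{\,\nu}_{\la,\mu}$; the product formula \eqref{EBoldLR} then follows by multiplicativity of $\ch$ under $\otimes$ (Lemma~\ref{LChProd}). Note that $d+e\le n$ is exactly what is needed so that all the relevant multipartitions of $d+e$ still have at most $n$ parts, keeping us inside the based quasi-hereditary setup of \S\ref{SSQHT}.

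\textbf{Step 2: The character identity.} For the single-color case, I would compute $\ch\De(\biota_i(\la))$ directly from the tableaux basis \eqref{EBasisDe}: by \eqref{EWtvT}, the coefficient of $z^\bmu$ in $\ch\De(\biota_i(\la))$ counts standard $X$-colored $\la$-tableaux of left weight $\bmu$, and one checks this reproduces (a super-analogue of) the Schur function $s_\la$ in the $z$-variables. Multiplying two Schur functions and expanding in the Schur basis is the Littlewood--Richardson rule, giving $\ch\De(\biota_i(\la))\cdot\ch\De(\biota_i(\mu))=\sum_\nu c^{\,\nu}_{\la,\mu}\,\ch\De(\biota_i(\nu))$. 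Combined with Lemma~\ref{LChProd} this yields the stated character formula, once the filtration existence is known (so that $\ch$ of the filtered module is the sum of $\ch$ of the factors). The linear independence in Lemma~\ref{LChInd} then pins down the multiplicities $(\De(\bla)\otimes\De(\bmu):\De(\bnu))=c^{\,\bnu}_{\bla,\bmu}$ uniquely.

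\textbf{Step 3: Existence of the standard filtration.} This is the main obstacle, and I expect it to consume the bulk of the work. The strategy I would use is to realize $\De(\biota_i(\la))$ and $\De(\biota_i(\mu))$ concretely as modified divided powers $\tilde\Ga^d V_R$, $\tilde\Ga^e W_R$ of appropriate calibrated $M_n(A_R)$-supermodules (the standard modules of $T^A(n,d)$ are built from such divided powers of $A_Re_i$-type modules via Lemmas~\ref{GaTiIdem}, \ref{GaTiSum}), and then use Lemma~\ref{GaTiSum} to identify $\De(\biota_i(\la))\otimes\De(\biota_i(\mu))$ with a summand of $\tilde\Ga^{d+e}(V_R\oplus W_R)$. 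The latter decomposes, by the analogue of the classical fact that a divided/symmetric power of a direct sum filters by products of divided powers, into pieces each isomorphic to a standard module $\De(\biota_i(\nu))$ with the correct multiplicity $c^{\,\nu}_{\la,\mu}$; here one invokes an integral form of the Littlewood--Richardson rule for divided powers. To make this an honest \emph{filtration} rather than just a character equality, I would work over $R$ first, exhibit an explicit chain of $\tilde\Ga^{d+e}M_n(A_R)$-submodules whose subquotients are free over $R$ with the right ranks and identify each subquotient with a standard module by comparing the tableaux bases \eqref{EBasisDe} and the $\eta_\bla$-weight decomposition, and then base-change to $\F$ (freeness over $R$ guarantees the filtration survives $\F\otimes_R-$). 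The costandard statement then follows by applying a suitable duality: the form $(\cdot,\cdot)_\sim$ of Lemma~\ref{GaTiForm} and Proposition~\ref{GaTiBil} gives a contravariant duality under which standard and costandard modules are interchanged, so a standard filtration of $\De(\bla)\otimes\De(\bmu)$ dualizes to a costandard filtration of $\nabla(\bla)\otimes\nabla(\bmu)$, with $\ch$ unchanged since characters are stable under this duality.
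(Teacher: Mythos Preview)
The paper does not prove Theorem~\ref{DeChTens}; it is imported wholesale from \cite[Main Theorem, Corollary 3.30]{KW1}, so there is no in-paper argument to compare your sketch against. That said, your outline has a genuine gap in Step~3 that would prevent it from going through as written.

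The problem is your claim that $\De(\biota_i(\la))$ can be realized as a modified divided power $\tilde\Ga^d V_R$ of some calibrated $M_n(A_R)$-supermodule $V_R$. This is false for general $\la$. Divided powers $\tilde\Ga^d V_R$ produce very special modules: for instance, when $V_R=\Co_n(A_Re_i)$, Lemma~\ref{GaCnIdem} shows you get $T^A(n,d)\eta_{\biota_i(d)}$, a projective (a direct sum of many standard modules), not a single $\De(\biota_i(\la))$. In the classical Schur algebra, $\Ga^d$ of the natural module is $\De((d))$, and arbitrary Weyl modules $\De(\la)$ arise only as subquotients of tensor products of such, never as a single divided power. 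So the sentence ``use Lemma~\ref{GaTiSum} to identify $\De(\biota_i(\la))\otimes\De(\biota_i(\mu))$ with a summand of $\tilde\Ga^{d+e}(V_R\oplus W_R)$'' has no content: you have not exhibited $V_R,W_R$, and for generic $\la,\mu$ they do not exist. Your Steps~1 and~2 are reasonable (and indeed the character identity is the easy half once the filtration is known), but Step~3 is where the actual work of \cite{KW1} lies, and the approach there requires substantially more than what is available in this excerpt --- one needs to build the filtration by hand, typically via an explicit skew-module construction or an inductive argument tensoring one box at a time, not by a one-shot divided-power identification.
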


An immediate corollary of Theorem~\ref{DeChTens} is

\begin{Corollary}\label{CTensTilt}
Let $\Tilt\in\mod{T^A(n,d)}$ and $ \Tilt'\in\mod{T^A(n,e)}$ be tilting supermodules. If $d+e\leq n$ then the tensor product  $\Tilt \otimes \Tilt'$ is a tilting supermodule over $T^A(n, d+e)$.
\end{Corollary}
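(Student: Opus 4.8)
The plan is to deduce Corollary~\ref{CTensTilt} directly from Theorem~\ref{DeChTens} together with the characterization of tilting supermodules recalled in \S\ref{SSBQHA}. Recall that a supermodule is tilting precisely when it admits both a standard and a costandard filtration. So I would first write $\Tilt$ and $\Tilt'$ as finite direct sums of indecomposable tilting supermodules, $\Tilt\cong\bigoplus_i\Tilt(\bla_i)$ and $\Tilt'\cong\bigoplus_j\Tilt(\bmu_j)$ with $\bla_i\in\La_+^I(n,d)$ and $\bmu_j\in\La_+^I(n,e)$, using the classification from \cite[Propositions 4.26, 4.27]{Rouquier}. Since $\otimes$ distributes over $\oplus$ (the coproduct $\coproduct$ is $R$-linear, so $(\bigoplus X_i)\otimes(\bigoplus Y_j)\cong\bigoplus_{i,j}X_i\otimes Y_j$ as $T^A(n,d+e)$-supermodules), it suffices to treat a single summand $\Tilt(\bla)\otimes\Tilt(\bmu)$.

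Next I would observe that each indecomposable tilting supermodule $\Tilt(\bla)$ in particular has a standard filtration and a costandard filtration (by definition), and then invoke Theorem~\ref{DeChTens} factor-by-factor along these filtrations. More precisely: if $0=W_0\subseteq W_1\subseteq\cdots\subseteq W_l=\Tilt(\bla)$ is a standard filtration with factors $\De(\bla^{(r)})$ and $0=W'_0\subseteq\cdots\subseteq W'_m=\Tilt(\bmu)$ is a standard filtration with factors $\De(\bmu^{(s)})$, then the $W_r\otimes W'_s$ (suitably interleaved, i.e. the filtration of $\Tilt(\bla)\otimes\Tilt(\bmu)$ obtained by first refining via $W_r\otimes\Tilt(\bmu)$ and then via $W_r\otimes W'_s$ inside each layer) give a filtration of $\Tilt(\bla)\otimes\Tilt(\bmu)$ whose subquotients are the $\De(\bla^{(r)})\otimes\De(\bmu^{(s)})$; here I use that $\otimes$ is exact in each variable (the coproduct makes $V\otimes W$ an $R$-free supermodule with underlying space $V\otimes_R W$, so short exact sequences are preserved). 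By Theorem~\ref{DeChTens}, since $d+e\leq n$, each such subquotient $\De(\bla^{(r)})\otimes\De(\bmu^{(s)})$ itself has a standard filtration. Splicing these in (a module having a filtration whose factors all have standard filtrations itself has a standard filtration — a standard fact for highest weight categories, cf. the discussion around \eqref{EDeMult}) shows that $\Tilt(\bla)\otimes\Tilt(\bmu)$ has a standard filtration. The identical argument with costandard filtrations and the $\nabla$-statement of Theorem~\ref{DeChTens} shows it has a costandard filtration as well, hence it is tilting; summing over $i,j$ finishes the proof.

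The only genuine subtlety — and the step I would be most careful about — is the exactness and associativity bookkeeping for $\otimes$ via the coproduct $\coproduct$: one must know that tensoring a short exact sequence of $T^A(n,d)$-supermodules with a fixed $T^A(n,e)$-supermodule stays exact, and that the two ways of building the bifiltration of $\Tilt(\bla)\otimes\Tilt(\bmu)$ agree. Both follow because, forgetting the module structure, $V\otimes W$ is just $V\otimes_R W$ with $R$ a PID and all modules in sight $R$-free (so $\otimes_R$ is exact), and the $T^A(n,d+e)$-action is by transport along $\coproduct$; I would state this once and then use it freely. Everything else is a formal consequence of Theorem~\ref{DeChTens} and the definition of tilting, so the corollary is genuinely immediate once this is in place.
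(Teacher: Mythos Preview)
Your argument is correct and follows the same route the paper intends: use Theorem~\ref{DeChTens} on the factors of standard (resp.\ costandard) filtrations and splice. The only remark is that the preliminary decomposition into indecomposable tiltings is unnecessary---you can work directly with standard and costandard filtrations of $\Tilt$ and $\Tilt'$ themselves; also, since the corollary is over the field $\F$ (not $R$), exactness of $\otimes$ is automatic and needs no PID/freeness discussion.
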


\subsection{\boldmath Modified divided powers of modules over $M_n(A_R)$}
\label{SSMNA}
In this subsection we work over $R$.

Let $A_R=\a_R\oplus \c_R\oplus A_{R,\1}$ be a calibrated superalgebra as in  \S\ref{SSGaA} with basis $B^A=B^A_\a\cup B^A_\c\cup B^A_\1$, and  $V_R=V_{R,\a}\oplus V_{R,\c}\oplus V_{R,\1}$ be a calibrated $A_R$-supermodule as in \S\ref{SSGammaTildeMod}. Recall that by definition $T^A(n,d)_R$ is $\tilde \Ga^d M_n(A_R)$ for the calibration on $M_n(A_R)$ induced by that on $A_R$.

The $R$-supermodule of column vectors  $\Co_n(V_R)=V_R^{\oplus n}$ is a left supermodule over $M_n(A_R)$ in a natural way. In fact, it is a calibrated $M_n(A_R)$-supermodule with $\Co_n(V_R)_\a:=\Co_n(V_{R,\a})$ and $\Co_n(V_R)_\c:=\Co_n(V_{R,\c})$. Then by Lemma~\ref{GaTiMod}, we have $\tilde\Ga^d\Co_n(V_R)$ is a left supermodule over $T^A(n,d)_R=\tilde \Ga^d M_n(A_R)$.

Extending scalars to $\F$ we get the left module 
$$\tilde\Ga^d\Co_n(V):=\F\otimes_R\tilde\Ga^d\Co_n(V_R)
$$ 
over $T^A(n,d)=\F\otimes_R T^A(n,d)_R$. 


\begin{Lemma}\label{GaCnIdem}
Let $e\in\a_R$ be an idempotent such that $be=b$ or $0$ for all $b\in B^A$.  Then $\tilde\Ga^d (\Co_n(A_Re)) \simeq T^A(n,d)_R \eta_{1^d, 1^d}^{e^d}$.
\end{Lemma}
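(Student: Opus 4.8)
The statement to prove is that $\tilde\Ga^d(\Co_n(A_Re)) \simeq T^A(n,d)_R\,\eta_{1^d,1^d}^{e^d}$, i.e. the modified divided power of the module of column vectors with entries in the right ideal $A_Re$ is, as a $T^A(n,d)_R$-module, the projective obtained by multiplying the algebra on the right by the idempotent $\eta_{1^d,1^d}^{e^d}$. This should be the exact analogue, at the level of $M_n(A_R)$, of Lemma~\ref{GaTiIdem}, and the cleanest route is to reduce to it.

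First I would set $\tilde A_R := M_n(A_R)$, which is a calibrated superalgebra by the calibration recorded in \S\ref{SSDefT}, with $\tilde\a_R = M_n(\a_R)$, basis $B^{\tilde A} = B^{M_n(A)}$ as in (\ref{EMBasis}), and $T^A(n,d)_R = \tilde\Ga^d\tilde A_R$ by definition. The next step is to identify the right object to plug into Lemma~\ref{GaTiIdem}: I claim the idempotent $\tilde e := \eta^{e}_{1,1}\in M_n(\a_R)$ (the matrix unit $\xi^{e}_{1,1}$, i.e. $e$ in the $(1,1)$ entry and $0$ elsewhere) satisfies $b\tilde e = b$ or $0$ for all $b\in B^{\tilde A}$. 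Indeed $\xi^{b}_{r,s}\xi^{e}_{1,1} = \delta_{s,1}\xi^{be}_{r,1}$, which by the hypothesis $be\in\{b,0\}$ on $e$ (together with the fact that $B^A$ is a heredity-type basis closed under such products, cf.\ Definition~\ref{DCC}(c)) is again a basis element times $1$ or is $0$. So $\tilde e$ is a legitimate idempotent in $\tilde\a_R$ for the purposes of \S\ref{SSGaA}. Lemma~\ref{GaTiIdem} applied to $(\tilde A_R, \tilde e)$ then gives $\tilde\Ga^d(\tilde A_R\tilde e) \cong (\tilde\Ga^d\tilde A_R)\,\eta^{\tilde e^d} = T^A(n,d)_R\,\eta^{\tilde e^d}_{1^d,1^d}$, where by the notational conventions of \S\ref{SSDefT} $\eta^{\tilde e^d}$ is precisely $\eta^{e^d}_{1^d,1^d}$ (since $\tilde e = \xi^e_{1,1}$, the tuple is $\bb = e\cdots e$, $\br = 1\cdots 1 = 1^d$, $\bs = 1^d$).

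It then remains to produce an isomorphism of calibrated $M_n(A_R)$-supermodules $M_n(A_R)\tilde e \cong \Co_n(A_Re)$ that respects the calibrations (i.e. carries the $\a$-part to the $\a$-part), because Lemma~\ref{IndC}/Lemma~\ref{GaTiMod} guarantee $\tilde\Ga^d$ depends functorially on such data and respects direct sums and the $\a$-component. Concretely, left-multiplication $M_n(A_R)\tilde e$ consists of matrices supported in the first column with entries in $A_Re$; the map sending such a matrix to its first column is an $M_n(A_R)$-module isomorphism onto $\Co_n(A_Re)$, and under the prescribed calibrations ($M_n(A_R)_\a = M_n(\a_R)$, $\Co_n(A_Re)_\a := \Co_n(\a_Re)$) it is visibly calibration-preserving: the $(r,1)$ entry lands in $\a_Re$ iff the matrix lies in $M_n(\a_R)\tilde e$. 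Applying $\tilde\Ga^d$ to this isomorphism of calibrated $M_n(A_R)$-supermodules, which is even, yields the desired even isomorphism $\tilde\Ga^d(\Co_n(A_Re)) \simeq \tilde\Ga^d(M_n(A_R)\tilde e) \cong T^A(n,d)_R\,\eta^{e^d}_{1^d,1^d}$, and the two displayed maps compose to the claimed one. (One can also argue directly on bases: $\tilde\Ga^d(\Co_n(A_Re))$ has basis $\{y_\bb\mid \bb\in\Se(B^Ae\times\{1\},d)/\Si_d\}$ in the column-vector labels, matching the basis $\{\eta^\bb_{1^d,\bs}\}$ of $T^A(n,d)_R\eta^{e^d}_{1^d,1^d}$ with $\bb$ running over $B^Ae$-tuples, exactly as in the proof of Lemma~\ref{GaTiIdem}.)

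The main obstacle is the bookkeeping in the second step — making sure that the idempotent $\xi^e_{1,1}$ of $M_n(A_R)$ genuinely satisfies the hypothesis $b\tilde e\in\{b,0\}$ for \emph{every} element $b$ of the chosen basis $B^{M_n(A)}$, which requires that the products $\xi^b_{r,s}\xi^e_{1,1} = \delta_{s,1}\,\xi^{be}_{r,1}$ stay inside $B^{M_n(A)}\cup\{0\}$; this is where the hypothesis that $e$ is one of the idempotents with $be\in\{b,0\}$ for $b\in B^A$ is used, and it is exactly the input needed so that the calibration on $M_n(A_R)$ and the conventions $V_{R,\a}:=\a_Re$, $V_{R,\c}:=\c_Re$ of \S\ref{SSMNA} align with those forced on $M_n(A_R)\tilde e$. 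Everything else is a transport of Lemma~\ref{GaTiIdem} across the functor $\tilde\Ga^d$, which the earlier lemmas have set up to behave well.
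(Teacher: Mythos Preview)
Your approach is correct and is essentially the same as the paper's: identify $\Co_n(A_Re)\simeq M_n(A_R)\,\xi^{e}_{1,1}$ as calibrated $M_n(A_R)$-supermodules, then apply Lemma~\ref{GaTiIdem} with $A_R$ replaced by $M_n(A_R)$ and $e$ replaced by $\xi^{e}_{1,1}$. The paper's proof is a two-line version of exactly this, while you additionally spell out the verification that $\xi^{b}_{r,s}\,\xi^{e}_{1,1}=\delta_{s,1}\xi^{be}_{r,1}\in\{\xi^{b}_{r,s},0\}$ and that the calibrations match, which is fine (though note your passing use of ``$\eta^{e}_{1,1}$'' for the matrix unit should be $\xi^{e}_{1,1}$, as you yourself parenthetically clarify).
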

\begin{proof}
First notice that $\Co_n(A_Re)\simeq M_n(A_R)\xi_{1,1}^e$ as $M_n(A_R)$-supermodules. The result now follows by applying Lemma~\ref{GaTiIdem}.
\end{proof}

We will also need the right module versions of the results of this subsection. If $V_R$ is a calibrated right $A_R$-supermodule, we consider the row vectors $\Ro_n(V_R)=V_R^{\oplus n}$ as a calibrated right $M_n(A_R)$-supermodule. Then we obtain a right $T^A(n,d)_R$-supermodule structure on $\tilde\Ga^d\Ro_n(V_R)$ and a right $T^A(n,d)$-module $\tilde\Ga^d\Ro_n(V):=\F\otimes_R\tilde\Ga^d\Ro_n(V_R)$. The right module analogue of Lemma~\ref{GaCnIdem} is then clear.

\section{\boldmath The extended zigzag Schur algebra}
\label{SZig}

\subsection{The extended zigzag algebra}
\label{SSZig}
Fix $\ell\geq 1$ and set 
$$
I:=\{0,1,\dots,\ell\},\quad
J:=\{0,1,\dots,\ell-1\}.
$$ 
Let $\Gamma$ be the quiver with vertex set $I$ and arrows $\{\za_{j,j+1},\za_{j+1,j}\mid j\in J\}$ as in the picture: 
\begin{align*}
\begin{braid}\tikzset{baseline=3mm}
	\coordinate (0) at (-4,0);
	\coordinate (1) at (0,0);
	\coordinate (2) at (4,0);
	\coordinate (3) at (8,0);
	\coordinate (6) at (12,0);
	\coordinate (L1) at (16,0);
	\coordinate (L) at (20,0);
	\draw [thin, black,->,shorten <= 0.1cm, shorten >= 0.1cm]   (0) to[distance=1.5cm,out=100, in=100] (1);
	\draw [thin,black,->,shorten <= 0.25cm, shorten >= 0.1cm]   (1) to[distance=1.5cm,out=-100, in=-80] (0);
	\draw [thin, black,->,shorten <= 0.1cm, shorten >= 0.1cm]   (1) to[distance=1.5cm,out=100, in=100] (2);
	\draw [thin,black,->,shorten <= 0.25cm, shorten >= 0.1cm]   (2) to[distance=1.5cm,out=-100, in=-80] (1);
	\draw [thin,black,->,shorten <= 0.25cm, shorten >= 0.1cm]   (2) to[distance=1.5cm,out=80, in=100] (3);
	\draw [thin,black,->,shorten <= 0.25cm, shorten >= 0.1cm]   (3) to[distance=1.5cm,out=-100, in=-80] (2);
	\draw [thin,black,->,shorten <= 0.25cm, shorten >= 0.1cm]   (6) to[distance=1.5cm,out=80, in=100] (L1);
	\draw [thin,black,->,shorten <= 0.25cm, shorten >= 0.1cm]   (L1) to[distance=1.5cm,out=-100, in=-80] (6);
	\draw [thin,black,->,shorten <= 0.25cm, shorten >= 0.1cm]   (L1) to[distance=1.5cm,out=80, in=100] (L);
	\draw [thin,black,->,shorten <= 0.1cm, shorten >= 0.1cm]   (L) to[distance=1.5cm,out=-100, in=-100] (L1);
	\blackdot(-4,0);
	\blackdot(0,0);
	\blackdot(4,0);
	\blackdot(16,0);
	\blackdot(20,0);
	\draw(-4,0) node[left]{$0$};
	\draw(0,0) node[left]{$1$};
	\draw(4,0) node[left]{$2$};
	\draw(10,0) node {$\cdots$};
	\draw(13.4,0) node[right]{$\ell-1$};
	\draw(18.65,0) node[right]{$\ell$};
	\draw(-2,1.2) node[above]{$ \za_{1,0}$};
	\draw(2,1.2) node[above]{$ \za_{2,1}$};
	\draw(6,1.2) node[above]{$ \za_{3,2}$};
	\draw(14,1.2) node[above]{$ \za_{\ell-2,\ell-1}$};
	\draw(18,1.2) node[above]{$ \za_{\ell,\ell-1}$};
	\draw(-2,-1.2) node[below]{$ \za_{0,1}$};
	\draw(2,-1.2) node[below]{$ \za_{1,2}$};
	\draw(6,-1.2) node[below]{$ \za_{2,3}$};
	\draw(14,-1.2) node[below]{$ \za_{\ell-2,\ell-1}$};
	\draw(18,-1.2) node[below]{$ \za_{\ell-1,\ell}$};
\end{braid}
\end{align*}
The {\em extended zigzag algebra $\EZig$} is the path algebra $\k\Gamma$ modulo the following relations:
\begin{enumerate}
\item All paths of length three or greater are zero.
\item All paths of length two that are not cycles are zero.
\item All length-two cycles based at the same vertex are equivalent.
\item $ \za_{\ell,\ell-1} \za_{\ell-1,\ell}=0$.
\end{enumerate}
Length zero paths yield the standard idempotents $\{ \ze_i\mid i\in I\}$ with $ \ze_i  \za_{i,j} \ze_j= \za_{i,j}$ for all admissible $i,j$. The algebra $\EZig$ is graded by the path length: 
$\EZig=\EZig^0\oplus \EZig^1\oplus \EZig^2.
$ 
We consider $\EZig$ as a superalgebra with 
$\EZig_\0=\EZig^0\oplus \EZig^2\quad \text{and}\quad \EZig_\1=\EZig^1.
$ 
Define
$
\zc_j:= \za_{j,j+1} \za_{j+1,j} 
$ for all $j \in J$. 

The superalgebra $\EZig$ has an anti-involution $\tau$ with
$$
\tau( \ze_i)=  \ze_i,\  \tau(\za_{ij})=  \za_{ji},\  \tau(\zc_j) = -\zc_j.
$$

We consider the total order on $I$
given by $0<1<\dots<\ell$. For $i\in I$, we set 
\begin{equation}\label{EXY}
\zX(i):=
\left\{
\begin{array}{ll}
	\{\ze_i,\za_{i-1,i}\}   &\hbox{if $i>0$,}\\
	\{\ze_0\}  &\hbox{if $i=0$,}
\end{array}
\right.
\quad
\zY(i):=
\left\{
\begin{array}{ll}
	\{\ze_i,\za_{i,i-1}\}   &\hbox{if $i>0$,}\\
	\{\ze_0\}  &\hbox{if $i=0$.}
\end{array}
\right.
\end{equation}
With respect to this data we have:

\begin{Lemma} \label{LAQH} {\rm \cite[Lemma 4.14]{KMgreen1}}
The graded superalgebra $\EZig$ is based quasi-hereditary with conforming heredity data $I,\zX,\zY$. 
For the corresponding heredity basis $\zB$ we have 
$\zB_{\fa}=\{ \ze_i\mid i\in I\},\  \zB_{\fc} = \{ \zc_j\mid j\in J\},\ \zB_\1=\{ \za_{j,j+1}, \za_{j+1,j}\mid j\in J\}.
$
\end{Lemma}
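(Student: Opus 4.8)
The plan is to verify directly that $I,\zX,\zY$ form heredity data in the sense of Definition~\ref{DCC} and that this heredity data is conforming; the description of $\zB_\fa,\zB_\fc,\zB_\1$ then drops out of the parities of the heredity basis elements. All of this is a finite computation with paths in $\Gamma$ modulo relations (i)--(iv). The preliminary step is to pin down a $\k$-basis of $\EZig$: relation (i) kills every path of length $\geq 3$; relation (ii) kills the length-two non-cycles; relation (iii) identifies, at each interior vertex $v$ with $0<v<\ell$, the two length-two cycles $\za_{v,v-1}\za_{v-1,v}$ and $\za_{v,v+1}\za_{v+1,v}=\zc_v$; and relation (iv) kills the length-two cycle at the vertex $\ell$. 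Hence $\{\ze_i\mid i\in I\}\sqcup\{\za_{j,j+1},\za_{j+1,j}\mid j\in J\}\sqcup\{\zc_j\mid j\in J\}$ spans $\EZig$, and because $\EZig$ is the (well-known) extended zigzag algebra --- or by exhibiting an explicit $\k$-representation on which these $4\ell+1$ elements act as linearly independent operators --- this set is in fact a $\k$-basis, so $\dim_\k\EZig=4\ell+1$.

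Axiom (a) is then bookkeeping, with the convention $\za_{a,b}\colon b\to a$ that is forced by $\ze_i\za_{i,j}\ze_j=\za_{i,j}$: the products $xy$ with $x\in\zX(i)$ and $y\in\zY(i)$ are $\ze_0$ when $i=0$, and $\ze_i=\ze_i\ze_i$, $\za_{i,i-1}=\ze_i\za_{i,i-1}$, $\za_{i-1,i}=\za_{i-1,i}\ze_i$, $\zc_{i-1}=\za_{i-1,i}\za_{i,i-1}$ when $i>0$; these are pairwise distinct and, as $i$ ranges over $I$, they exhaust the basis of $\EZig$ found above. In particular each of $\ze_i,\za_{i,i-1},\za_{i-1,i},\zc_{i-1}$ lies at heredity level $i$, so $\EZig^{>i}=\spa\{\ze_j,\za_{j,j-1},\za_{j-1,j},\zc_{j-1}\mid j>i\}$. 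Axiom (c) is immediate from source/target bookkeeping for single paths: for $x\in\zX(i)$ the source of $x$ is $i$, so $xe_i=x$, while $e_ix$ equals $x$ if $x=\ze_i$ and is $0$ otherwise; the identities for $y\in\zY(i)$ hold by the same computation with left and right multiplication interchanged; and $e_jx\in\{x,0\}$, $ye_j\in\{y,0\}$ since $x,y$ are single paths.

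For axiom (b) I would reduce, by $\k$-linearity of $a\mapsto(ax\bmod \EZig^{>i})$, to the case that $a$ is one of the $4\ell+1$ basis elements, and check that $ax$ for $x\in\zX(i)$ either is a scalar multiple of an element of $\zX(i)$ or lies in $\EZig^{>i}$. The nonzero products are $\ze_i\ze_i=\ze_i$, $\ze_{i-1}\za_{i-1,i}=\za_{i-1,i}$, $\za_{i-1,i}\ze_i=\za_{i-1,i}$, all in $\zX(i)$; and $\za_{i+1,i}\ze_i=\za_{i+1,i}$, $\zc_i\ze_i=\zc_i$, $\za_{i,i-1}\za_{i-1,i}=\zc_i$ (this last via relation (iii), and equal to $0$ when $i=\ell$ by relation (iv)), which by the level description above (applied with $i$ replaced by $i+1$) lie in $\EZig^{>i}$; the remaining products vanish by relations (ii) and (i). The $\zY(i)$-side follows by applying the anti-involution $\tau$, which fixes $I$, preserves every ideal $\EZig^{>i}$, and interchanges $\zX(i)$ with $\zY(i)$. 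Finally, since $\ze_i$ has degree $0$, the arrows have degree $1$, and $\zc_j$ has degree $2$, we get $\zX(i)_\0=\zY(i)_\0=\{\ze_i\}$ and, for $i>0$, $\zX(i)_\1=\{\za_{i-1,i}\}$, $\zY(i)_\1=\{\za_{i,i-1}\}$, so by (\ref{EBA}) and (\ref{EBC}) we obtain $\zB_\fa=\{\ze_i\mid i\in I\}$ and $\zB_\fc=\{\za_{i-1,i}\za_{i,i-1}\mid 1\leq i\leq\ell\}=\{\zc_j\mid j\in J\}$, while the odd part of $\zB$ is $\zB_\1=\{\za_{j,j+1},\za_{j+1,j}\mid j\in J\}$. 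Since $\spa\zB_\fa=\EZig^0$ contains $1_{\EZig}=\sum_{i\in I}\ze_i$ and is closed under multiplication, the heredity data is conforming.

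The main difficulty is not conceptual but one of keeping conventions consistent: the source/target convention for the arrows must be chosen so that it is simultaneously compatible with $\ze_i\za_{i,j}\ze_j=\za_{i,j}$ and with the left/right asymmetry of axiom (c) between $\zX(i)$ and $\zY(i)$, and each monomial $\ze_i$, $\za_{i\pm1,i}$, $\zc_i$ must be assigned to its correct heredity level so that the ideals $\EZig^{>i}$ are identified correctly --- it is exactly relations (iii) and (iv) that make the level assignment of the degree-two cycles work out, placing $\zc_i$ at level $i+1$ for $i<\ell$ and removing the cycle that would otherwise sit at vertex $\ell$. Once these conventions are fixed, each verification above is routine and finite.
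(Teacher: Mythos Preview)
Your verification is correct. The paper does not prove this lemma at all: it is stated with a citation to \cite[Lemma~4.14]{KMgreen1} and no argument is given in the present paper. What you have written is exactly the direct check of axioms (a)--(c) of Definition~\ref{DCC} that the cited reference carries out, together with the parity bookkeeping that identifies $\zB_\fa,\zB_\fc,\zB_\1$ and shows $\spa\zB_\fa=\EZig^0$ is a unital subalgebra. The only point worth flagging is that you appeal to ``an explicit $\k$-representation'' for linear independence of the $4\ell+1$ spanning elements without actually producing one; this is fine in spirit, but if you want the argument to be fully self-contained you should either write down such a representation or observe that the path algebra $\k\Gamma$ is graded by path length with the relations homogeneous, so the images of the listed paths remain linearly independent in the quotient (length~$0$ and length~$1$ paths are untouched, and in length~$2$ the relations only identify or kill cycles, leaving one surviving cycle $\zc_j$ at each $j\in J$).
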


For $i\in I$, let $\zL(i)=\k\cdot \zv_i$ with $|\zv_i|=\0$ and the action $\ze_i\zv_i=\zv_i$, $\zb\zv_i=0$ for all $\zb\in\zB\setminus\{\ze_i\}$. This makes $\zL(i)$ a $\EZig$-supermodule, and, up to isomorphism, $\{\zL(i)\mid i\in I\}$ is a complete set of irreducible $\EZig$-supermodules. Note that $L(i)^\tau\simeq L(i)$.

The standard modules $\De(i)$ have similarly explicit description: $\De(0)\simeq \zL(0)$, and for $i>0$,  $\De(i)$ has basis $\{\zv_i,\zw_{i}\}$ with $|\zv_i|=\0$, $|\zw_{i}|=\1$ and the only non-trivial action is: $\ze_i\zv_i=\zv_i$, $\ze_{i-1}\zw_i=\zw_i$, $\za_{i-1,i}\zv_i=\zw_i$. For the costandard modules we have 
$\nabla(0)\simeq\De(0)^\tau\simeq \zL(0)$, and for $i>0$,  $\nabla(i)\simeq\De(i)^\tau$ has basis $\{\zv_i^*,\zw_{i}^*\}$ with $|\zv_i^*|=\0$, $|\zw_{i}^*|=\1$ and the only non-trivial action is: $\ze_i\zv_i^*=\zv_i^*$, $\ze_{i-1}\zw_i^*=\zw_i^*$, $\za_{i,i-1}\zw_i^*=-\zv_i^*$. 

The indecomposable tilting supermodules over $\EZig$ are as follows: $\Tiltz(0)\simeq\zL(0)$ and $\Tiltz(i)=\Pi\EZig \ze_{i-1}$ for $i>0$. It will actually be more convenient for us to work with the tilting modules $\Pi\Tiltz(i)$. Thus $\Pi\Tiltz(0)=\Pi\zL(0)$ is $1$-dimensional with basis $\{\zv_0\}$ where $\zv_0$ is odd, $\Pi\Tiltz(1)=\EZig \ze_{0}$ has basis $\{\ze_0,\za_{1,0},\zc_0\}$, and, for $i>1$, $\Pi\Tiltz(i)=\EZig \ze_{i-1}$ has basis $\{\ze_{i-1},\za_{i-2,i-1},\za_{i,i-1},\zc_{i-1}\}$. For $i>0$, $\Pi\Tiltz(i)$ has a standard filtration $\Pi\De(i)|\De(i-1)$ (this means $\Pi\De(i)\subseteq \Tiltz(i)$ and $\Tiltz(i)/\Pi\De(i)\simeq\De(i-1)$) and a  costandard filtration $\nabla(i-1)|\Pi\nabla(i)$.

We have a full tilting module 
$$\Tiltz:=\bigoplus_{i\in I}\Pi\Tiltz(i)=\Pi L(0)\oplus \bigoplus_{i=0}^{\ell-1}\EZig\ze_i.$$ 
and the Ringel dual algebra $\EZig'=\End_\EZig(\Tiltz)^\sop$. In fact, $\EZig$ is Ringel self-dual, i.e. there is an isomorphism of superalgebras $\EZig'\cong\EZig$ which we now proceed to describe. 

For any $i\in I$, let $\iota_i:\Pi\Tiltz(i)\to \Tiltz$ and $\pi_i:\Tiltz\to \Pi\Tiltz(i)$ be the natural embedding and projection.
We have the right multiplication maps 
$\rho_{\zc_i}:\EZig\ze_i\to \EZig\ze_i, \zv\mapsto \zv\zc_i$ and  $\rho_{\za_{ij}}:\EZig\ze_i\to \EZig\ze_j,  \zv\mapsto (-1)^{|\zv|}\zv\za_{ij}$. 
Let $f:\Pi\Tiltz(0)=\Pi\zL(0)\,\into\, \Pi\Tiltz(1)=\EZig\ze_0$ be the embedding given by $\zv_0 \mapsto \zc_0$,
and let $g:\Pi\Tiltz(1)=\EZig\ze_0\,\onto\, \Pi\Tiltz(0)=\Pi\zL(0)$ be the surjection such that $\ze_0 \mapsto \zv_0$. Note that $f$ and $g$ are odd. 
Define the following elements of $\EZig'$:
\begin{enumerate}[\hspace{5mm}]
\item[$\bullet$] $\ze_i':=\pi_{\ell-i}$ for all $i\in I$;
\item[$\bullet$] $\zc_i':=\iota_{\ell-i}\circ \rho_{\zc_{\ell-i-1}}\circ \pi_{\ell-i}$ for all $i\in J$;
\item[$\bullet$] $\za_{i+1,i}':=\iota_{\ell-i}\circ \rho_{\za_{\ell - i -2, \ell - i - 1}}\circ\pi_{\ell-i-1}$ 
and $\za_{i,i+1}' :=\iota_{\ell-i-1}\circ \rho_{\za_{\ell - i - 1,\ell - i -2 }}\circ\pi_{\ell-i}$, for all $i=0,\dots,\ell-2$;

\item[$\bullet$] $\za_{\ell,\ell-1}':=\iota_1\circ f\circ \pi_0$ and $\za_{\ell-1,\ell}':=\iota_0\circ g\circ \pi_1$.
\end{enumerate}

\begin{Lemma}\label{LZ'} 
Mapping  $\ze_i\mapsto \ze_i',\ \za_{i,j}\mapsto \za_{i,j}',\ \zc_i\mapsto \zc_i'$ is an isomorphism of superalgebras $\EZig\iso \EZig'$. In particular, $\EZig$ is Ringel self-dual. 
\end{Lemma}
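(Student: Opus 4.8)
The plan is to check directly that the assignment $\ze_i\mapsto\ze_i'$, $\za_{i,j}\mapsto\za_{i,j}'$, $\zc_i\mapsto\zc_i'$ is parity-preserving and respects the defining relations of $\EZig$, so that it extends to a homomorphism of superalgebras $\varphi\colon\EZig\to\EZig'$, and then to deduce that $\varphi$ is an isomorphism by showing it carries the heredity basis $\zB$ of Lemma~\ref{LAQH} onto a basis of $\EZig'$.

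First I would make the structure of $\EZig'$ explicit. From $\Tiltz=\bigoplus_{k\in I}\Pi\Tiltz(k)$, together with $\pi_k\circ\iota_k=\id$ and $\pi_k\circ\iota_{k'}=0$ for $k\neq k'$, one identifies $\EZig'=\End_\EZig(\Tiltz)^\sop$ with $\bigoplus_{k,k'\in I}\Hom_\EZig(\Pi\Tiltz(k),\Pi\Tiltz(k'))$, the product being composition taken in the opposite order with the sign prescribed by $\sop$. For $k\geq 1$ the summand $\Pi\Tiltz(k)=\EZig\ze_{k-1}$ is projective, so evaluation at $\ze_{k-1}$ identifies $\Hom_\EZig(\EZig\ze_{k-1},\EZig\ze_{k'-1})$ with $\ze_{k-1}\EZig\ze_{k'-1}$, and under this identification the maps $\rho_{\za_{ij}}$ and $\rho_{\zc_j}$ become (up to sign) the basis vectors $\za_{ij}$ and $\zc_j$ of $\EZig$; the remaining Hom-spaces, those involving the summand $\Pi\Tiltz(0)=\Pi\zL(0)$, are computed by hand from the modules of \S\ref{SSZig} and come out as $\Hom_\EZig(\Pi\zL(0),\EZig\ze_0)=\k f$, $\Hom_\EZig(\EZig\ze_0,\Pi\zL(0))=\k g$, $\Hom_\EZig(\Pi\zL(0),\Pi\zL(0))=\k\,\id$, and $0$ whenever the index $0$ is paired with a $j\geq 2$. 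Hence $\Hom_\EZig(\Pi\Tiltz(k),\Pi\Tiltz(k'))$ has dimension $2$ if $k=k'$, dimension $1$ if $|k-k'|=1$, and $0$ otherwise, so $\dim\EZig'=|\zB|=\dim\EZig$; moreover each of $\ze_i'$ ($i\in I$), $\zc_i'$ ($i\in J$), $\za_{i,j}'$ is a nonzero element distributed among these Hom-spaces so as to span $\EZig'$. Since $|\zB|=\dim\EZig'$, it follows that as soon as $\varphi$ is known to be an algebra map it is an isomorphism.

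Next I would verify the relations. The relations $\ze_i'\ze_j'=\de_{ij}\ze_i'$, $\sum_i\ze_i'=1_{\EZig'}$, $\ze_i'\za_{i,j}'=\za_{i,j}'=\za_{i,j}'\ze_j'$ and the analogous absorption of idempotents by $\zc_i'$ are immediate from the identities among the $\pi_k$ and $\iota_k$. The vanishing of all paths of length $\geq 3$ and of all non-cycle paths of length $2$ (and of $\za_{\ell,\ell-1}'\za_{\ell-1,\ell}'$) holds because each such product of primed generators is a composite of arrow-type morphisms which either lands in a zero Hom-space, or is $\pm$ right-multiplication by the corresponding path of $\EZig$ --- of length $\geq 3$, or a non-cycle of length $2$, hence $0$ --- or equals $g\circ f=0$, the last coming from $g(\zc_0)=0$. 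The crucial relation is (3), identifying the two length-two cycles at each vertex with $\zc_j'$: after cancelling $\pi_k\iota_k$ this amounts to the identities $\rho_{\za_{b,a}}\circ\rho_{\za_{a,b}}=\pm\,\rho_{\za_{a,b}\za_{b,a}}=\pm\,\rho_{\zc_a}$ on $\EZig\ze_a$, together with $f\circ g=\rho_{\zc_0}$ at the vertex where the index $0$ occurs.

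The main obstacle is the sign bookkeeping in this last step. One has to track simultaneously the parity shifts $\Pi$ on the summands $\Pi\Tiltz(k)$, the odd maps $f$ and $g$, the sign $(-1)^{|\zv|}$ built into $\rho_{\za_{ij}}$, the anti-involution $\tau$ (through which $\EZig$ is matched with $\End_\EZig(\Tiltz)^\sop$), and the Koszul sign in the definition of the opposite superalgebra, and check that they combine so that $\varphi$ is multiplicative exactly, not merely up to signs --- the explicit formulas for $\ze_i',\za_{i,j}',\zc_i'$ having been chosen precisely so that this works. Granting the signs, the rest is formal: $\varphi$ is a well-defined homomorphism of superalgebras sending the heredity basis onto a basis of $\EZig'$, hence an isomorphism, and the Ringel self-duality of $\EZig$ is immediate.
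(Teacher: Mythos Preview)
Your proposal is correct and follows essentially the same two-step strategy as the paper: first establish that the primed elements form a $\k$-basis of $\EZig'$ (so that the assignment, once known to be multiplicative, is automatically bijective), and then verify the defining relations of $\EZig$ case by case. The only real difference is in how the Hom-spaces are computed: the paper invokes the standard fact $\dim\Hom_\EZig(\De(i),\nabla(j))=\de_{i\leq j}$ together with the $\De$/$\nabla$-filtrations of the $\Pi\Tiltz(k)$, whereas you compute $\Hom_\EZig(\EZig\ze_{k-1},\EZig\ze_{k'-1})\cong\ze_{k-1}\EZig\ze_{k'-1}$ directly from projectivity and handle the summand $\Pi\zL(0)$ by hand. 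Both arrive at the same dimension count (note your blanket ``dimension $2$ if $k=k'$'' should read $1$ when $k=k'=0$, as you yourself record earlier). Two small remarks: the anti-involution $\tau$ plays no role here and should be dropped from your list of sign sources; and rather than ``granting the signs'', you should actually carry out at least one representative computation such as $\za_{i,i+1}'\cdot\za_{i+1,i}'=\zc_i'$ in $\EZig'=\End_\EZig(\Tiltz)^\sop$, tracking the $\sop$-sign and the $(-1)^{|\zv|}$ in $\rho_{\za_{ij}}$---this is exactly what the paper does.
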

\begin{proof}
Using the fact that $\dim \Hom_{\EZig}(\De(i), \nabla(j)) = 0$ if $i > j$, and is equal to $1$ if $i = j$, it is easy to see that
$$ \{ \ze_i' \mid i \in I \} \sqcup \{ \za_{j, j+1}', \za_{j+1, j}', \zc_j' \mid j \in J \}$$
is a basis for $\EZig'$. Hence the given map is a linear isomorphism.

To check that the map is an algebra homomorphism, one proceeds by cases. For example, for $i \in J$, in $\EZig' = \End_{\EZig}(\Tiltz)^\sop$ we have
\begin{align*}
	\za_{i, i+1}' \cdot \za_{i+1, i}' &= - \za_{i+1, i}' \circ  \za_{i, i+1}'\\
	&= - \iota_{\ell-i}\circ \rho_{\za_{\ell - i -2, \ell - i - 1}}\circ\pi_{\ell-i-1} \circ \iota_{\ell-i-1}\circ \rho_{\za_{\ell - i - 1,\ell - i -2 }}\circ\pi_{\ell-i}\\
	&= - \iota_{\ell-i}\circ \rho_{\za_{\ell - i -2, \ell - i - 1}} \circ \rho_{\za_{\ell - i - 1,\ell - i -2 }}\circ\pi_{\ell-i}\\
	&=  \iota_{\ell-i} \circ \rho_{c_{\ell - i - 1}} \circ \pi_{\ell - i}\\
	&= \zc_i'.
\end{align*}
The other cases are checked similarly.
\end{proof}

We use the isomorphism $\EZig\iso \EZig'$ of Lemma~\ref{LZ'} to transport the heredity data $I,\zX,\zY$ from $\EZig$ onto a heredity data $I',\zX',\zY'$ for $\EZig'$ so that $I'=I$ {\em with the same order}, and 
$$\zX'(i):=
\left\{
\begin{array}{ll}
\{\ze_i',\za_{i-1,i}'\}   &\hbox{if $i>0$,}\\
\{\ze_0'\}  &\hbox{if $i=0$,}
\end{array}
\right.
\quad
\zY'(i):=
\left\{
\begin{array}{ll}
\{\ze_i',\za_{i,i-1}'\}   &\hbox{if $i>0$,}\\
\{\ze_0'\}  &\hbox{if $i=0$.}
\end{array}
\right.
$$
(Usually, one gets the opposite partial order on $I$ in this place but we have built in a relabeling $i\mapsto \ell-i$ into the construction). 
With this hereditary data, we have the right modules $\zL'(i),\De'(i),\nabla'(i)$ and $\Tiltz'(i)$. For example, $\Tiltz'(0)\simeq \zL'(0)$ (with $\ze_0'$ acting as identity and all the other standard generators acting as $0$) and $\Tiltz'(i)\simeq \Pi\ze'_{i-1}\EZig'$ for $i>0$. A routine check shows that, as a right $\EZig'$-module, $\Tiltz$ decomposes as follows:
$$
\Tiltz=\bigoplus_{i\in I}\Pi\Tiltz'(i),
$$
where the summands are defined explicitly as follows:
\begin{enumerate}[\hspace{5mm}]
\item[$\bullet$] $\Pi\Tiltz'(0)=\k\cdot \za_{\ell,\ell-1}\subseteq  \EZig\ze_{\ell-1}=\Pi\Tiltz(\ell)\subseteq \Tiltz$;
\item[$\bullet$] $\Pi\Tiltz'(\ell)=\spa_\k(\zv_0,\ze_0,\zc_0,\za_{0,1})\subseteq \Pi\Tiltz(0)\oplus \Pi\Tiltz(1)\oplus \Pi\Tiltz(2)\subseteq \Tiltz$ (dropping $\za_{0,1}$ if $\ell=1$);
\item[$\bullet$] for for $i\neq 0,\ell$, we set 
\begin{align*}
\Pi\Tiltz'(i)
&=\spa_\k(\ze_{\ell-i},\za_{\ell-i,\ell-i-1},\za_{\ell-i,\ell-i+1},\zc_{\ell-i})
\\
&\subseteq \Pi\Tiltz(\ell-i+1)\oplus \Pi\Tiltz(\ell-i)\oplus \Pi\Tiltz(\ell-i+2)\subseteq \Tiltz 
\end{align*}
(dropping $\za_{\ell-i,\ell-i+1}$ if $i=1$).
\end{enumerate}

In this last paragraph we suppose that $\k=R$ and recall the theory of \S\S\ref{SSGaA},\ref{SSGammaTildeMod}. 
We have the subalgebra $\a_R:=\spa(B_\a)=\spa_R(\ze_i\mid i\in I)\subseteq \EZig_{R,\0}$ and the analogous subalgebra $\a'_R=\spa_R(\ze_i'\mid i\in I)\subseteq \EZig'_{R,\0}$. We have $R$-module decomposition $\Tiltz_{R,\0}=\Tiltz_{R,\a}\oplus \Tiltz_{R,\c}$ where 
$\Tiltz_{R,\a}=\spa_R(\ze_j\mid j\in J)$ and $\Tiltz_{R,\c}=\spa_R(\zc_j\mid j\in J)$. Then it is clear from the explicit construction above that 
\begin{equation}\label{E170921}
\a_R\cdot(\Tiltz_{R,\a})\cdot\a'_R\subseteq \Tiltz_{R,\a}.
\end{equation} 
For any $i\in I$, $\Pi\Tiltz(i)_{R,\a}:=\Pi\Tiltz(i)_R\cap \Tiltz_{R,\a}$, $\Pi\Tiltz(i)_{R,\c}:=\Pi\Tiltz(i)_R\cap \Tiltz_{R,\c}$, $\Pi\Tiltz'(i)_{R,\a}:=\Pi\Tiltz'(i)_R\cap \Tiltz_{R,\a}$, $\Pi\Tiltz'(i)_{R,\c}:=\Pi\Tiltz'(i)_R\cap \Tiltz_{R,\c}$. These define calibrations on each $\Pi \Tiltz(i)_R$ and $\Tiltz_R$ as left $\EZig_R$-supermodules, as well as on each $\Pi \Tiltz'(i)_R$ and $\Tiltz_R$ as right $\EZig'_R$-supermodules.



\subsection{\boldmath A full tilting module for $T^Z(n,d)$}
\label{TZFull}
From now on until the end of the paper we fix $n\in\Z_{>0}$ and $d\in\Z_{\geq 0}$ such that $d\leq n$. 

To construct a full tilting module for based quasihereditary superalgebra $T^\EZig(n,d)$, we need to first work integrally. Recall from the previous subsection that $\Tiltz_R$ is 
a $(\EZig_R,\EZig'_R)$-bisupermodule.

Recall the constructions of \S\ref{SSMNA}. In particular, we have the left $M_n(\EZig_R)$-module structure on $\Co_n(\Tiltz_R)$ 
and a right $M_n(\EZig'_R)$-module structure on $\Ro_n(\Tiltz_R)$. 
Setting as in \S\ref{SSMNA}, $\Co_n(\Tiltz_R)_\a:=\Co_n(\Tiltz_{R,\a})$ and $\Ro_n(\Tiltz_R)_\a:=\Ro_n(\Tiltz_{R,\a})$, we have by 
(\ref{E170921}) that $M_n(\a)\Co_n(\Tiltz_R)_\a\subseteq \Co_n(\Tiltz_R)_\a$ and $\Ro_n(\Tiltz_R)_\a M_n(\a)\subseteq \Ro_n(\Tiltz_R)_\a$. So by Lemma~\ref{GaTiMod} (and its right module analogue), $\tilde\Ga^d \Co_n(\Tiltz_R)$ is a left module over $T^\EZig(n,d)_R$ and $\tilde\Ga^d \Ro_n(\Tiltz_R)$ is a right  module over $T^{\EZig'}(n,d)_R$. Similarly, for every $i\in I$, we have left $T^\EZig(n,d)_R$-modules $\tilde\Ga^d \Co_n(\Pi\Tiltz(i)_R)$ and right $T^{\EZig'}(n,d)_R$-modules $\Ro_n(\Pi\Tiltz'(i)_R)$. 
Extending scalars, we have a left supermodule 
$$\scrT^d_i:=\F\otimes_R \tilde\Ga^d \Co_n(\Pi\Tiltz(i)_R)$$ 
over $T^\EZig(n,d)=\F\otimes_RT^\EZig(n,d)_R$. 

Recall from \S\ref{SSQHT} that for $d\leq n$, the algebra $T^\EZig(n,d)$ is quasi-hereditary with respect to the poset $\La^I_+(n,d)$ with partial order $\leq_I$, so it has its own standard modules $\{\De(\bla)\mid \bla\in\La^I_+(n,d)\}$, costandard modules $\{\nabla(\bla)\mid \bla\in\La^I_+(n,d)\}$ and 
indecomposable tilting modules $\{\Tilt(\bla)\mid \bla\in\La^I_+(n,d)\}$. Moreover, by \cite[Proposition 6.20]{KMgreen3}, the anti-involution $\tau$ on $\EZig$ extends to the anti-involution 
$$\tau_{n,d}:T^\EZig(n,d)\to T^\EZig(n,d),\ \eta_{\br,\bs}^\bb\mapsto \eta_{\bs,\br}^{\tau(\bb)}$$ 
where for a $d$-tuple $\bb=\zb_1\cdots\zb_d\in\zB^d$ we denote $\tau(\bb):=\tau(\zb_1)\cdots\tau(\zb_d)$. As in \cite[(2.14)]{KW1}, we then have for all $\bla\in \La^I_+(n,d)$:
\begin{equation}\label{ETauDual}
\De(\bla)^{\tau_{n,d}}\simeq \nabla(\bla).
\end{equation}

Since $\tau_{n,d}(\eta_\bmu)=\eta_\bmu$ for all $\bmu\in\La^I(n,d)$, we deduce:

\begin{Lemma} \label{LDeltaNablaChar}
For all $\bla\in\La^I_+(n,d)$, we have $\ch \De(\bla)=\ch \nabla(\bla)$. 
\end{Lemma}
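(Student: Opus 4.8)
The plan is to deduce the statement directly from the $\tau_{n,d}$-duality $\De(\bla)^{\tau_{n,d}}\simeq\nabla(\bla)$ of \eqref{ETauDual} together with the observation that $\tau_{n,d}$ fixes each weight idempotent $\eta_\bmu$. Since the monomials $z^\bmu$ ($\bmu\in\La^I(n,d)$) are linearly independent, it suffices to show that
\[
\dim\eta_\bmu\nabla(\bla)=\dim\eta_\bmu\De(\bla)
\qquad\text{for every }\bmu\in\La^I(n,d),
\]
and then sum the quantities $(\dim\eta_\bmu\,\cdot\,)\,z^\bmu$ over $\bmu\in\La^I(n,d)$ to obtain $\ch\nabla(\bla)=\ch\De(\bla)$.

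\textbf{Key step.} Let $B$ be a $\k$-superalgebra with an even superalgebra anti-involution $\tau$, let $V\in\mod{B}$, and let $e\in B$ be an \emph{even} idempotent with $\tau(e)=e$. I claim there is a natural $\k$-linear isomorphism $eV^\tau\iso(eV)^*$, so that $\dim eV^\tau=\dim eV$. Indeed, recall $V^\tau=V^*$ with action $(xf)(v)=(-1)^{|x||f|}f(\tau(x)v)$; since $|e|=\0$ this gives $(ef)(v)=f(ev)$ for all $f\in V^*$, i.e.\ $ef=f\circ e$ where $e\colon V\to eV$ is the projection. Hence $eV^\tau=\{f\circ e\mid f\in V^*\}$, and restriction to $eV$ identifies this space with $(eV)^*$ (every functional on $eV$ extends to $f\circ e$ for some $f\in V^*$, and $f\circ e$ is determined by its restriction to $eV$). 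Therefore $\dim eV^\tau=\dim(eV)^*=\dim eV$.

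\textbf{Conclusion.} Apply the key step with $B=T^\EZig(n,d)$, $\tau=\tau_{n,d}$, $V=\De(\bla)$, and $e=\eta_\bmu$: here $\eta_\bmu$ is even and $\tau_{n,d}(\eta_\bmu)=\eta_\bmu$ (as recorded just before the statement, since $\tau_{n,d}$ maps $\eta^\bb_{\br,\bs}\mapsto\eta^{\tau(\bb)}_{\bs,\br}$ and fixes the weight idempotents). Combined with $\nabla(\bla)\simeq\De(\bla)^{\tau_{n,d}}$ from \eqref{ETauDual}, this yields $\dim\eta_\bmu\nabla(\bla)=\dim\eta_\bmu\De(\bla)^{\tau_{n,d}}=\dim\eta_\bmu\De(\bla)$ for every $\bmu\in\La^I(n,d)$, and summing against $z^\bmu$ gives $\ch\De(\bla)=\ch\nabla(\bla)$.

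\textbf{Main obstacle.} There is no serious difficulty here; the only point requiring care is the bookkeeping of parities in the twisted action defining $V^{\tau_{n,d}}$, but these signs all disappear because $\eta_\bmu$ is homogeneous of parity $\0$. (If one preferred, the same conclusion also follows formally from \eqref{EDeMult} and its costandard analogue together with $\Hom$-duality under $\tau_{n,d}$, but the idempotent argument above is the most direct.)
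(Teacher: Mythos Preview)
Your proof is correct and follows essentially the same approach as the paper: the paper simply states ``Since $\tau_{n,d}(\eta_\bmu)=\eta_\bmu$ for all $\bmu\in\La^I(n,d)$, we deduce'' the lemma from \eqref{ETauDual}, and you have spelled out in detail the elementary reason why $\tau$-invariance of an even idempotent forces $\dim eV^\tau=\dim eV$. There is nothing to add.
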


\begin{Proposition}\label{TZTilt}
For all $i\in I$, the left $T^\EZig(n,d)$-module $\scrT^d_i$ is tilting and has a unique maximal weight $\biota_i(1^d)$ (with respect to $\leq_I$). 
\end{Proposition}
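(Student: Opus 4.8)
The plan is to prove the two assertions separately: the statement about weights by inspecting an explicit basis of $\scrT^d_i$, and the tilting statement via the principle that a module which has a standard filtration and is self-dual under $\tau_{n,d}$ is tilting. Recall first that $\scrT^d_i=\F\otimes_R\tilde\Ga^d\Co_n(\Pi\Tiltz(i)_R)$ has $\F$-basis $\{y_\bb\mid\bb\in\Se(B,d)/\Si_d\}$, where $B=\{v^{(r)}_\zb\mid 1\le r\le n,\ \zb\ \text{a basis element of}\ \Pi\Tiltz(i)\}$ is the natural basis of $\Co_n(\Pi\Tiltz(i))$. If $\bb=(v^{(r_1)}_{\zb_1},\dots,v^{(r_d)}_{\zb_d})$ and $j_k\in I$ is the vertex with $\ze_{j_k}\zb_k=\zb_k$, then $y_\bb$ lies in the weight space $\eta_{\bmu(\bb)}\scrT^d_i$ for $\bmu(\bb):=\sum_{k=1}^d\biota_{j_k}(\eps_{r_k})$; this is the analogue for $\tilde\Ga^d\Co_n(-)$ of \eqref{EWtvT}, cf.\ \cite[Lemma~5.9]{KMgreen2}. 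Reading off the bases from \S\ref{SSZig}: for $i=0$ one always has $j_k=0$, while for $i\ge1$ one has $j_k\in\{i-2,i-1,i\}$, with $j_k=i$ exactly when $\zb_k=\za_{i,i-1}$, which is odd. Using the definition of $\leq_I$ I would then check: (a) $\biota_i(1^d)$ is attained, by $\bb=(v^{(1)}_{\za_{i,i-1}},\dots,v^{(d)}_{\za_{i,i-1}})$ for $i\ge1$ and $\bb=(v^{(1)}_{\zv_0},\dots,v^{(d)}_{\zv_0})$ for $i=0$ --- these lie in $\Se(B,d)$ precisely because $\za_{i,i-1}$, resp.\ $\zv_0$, is odd and the $d\le n$ coordinates $1,\dots,d$ are distinct; (b) every attained weight satisfies $\|\bmu(\bb)\|\unlhd_I\|\biota_i(1^d)\|$, with equality in $\Z_{\geq 0}^I$ forcing every $\zb_k$ to equal $\za_{i,i-1}$ (resp.\ $\zv_0$), in which case $\bmu(\bb)=\biota_i(\la)$ for $\la\in\{0,1\}^n$ with $|\la|=d$, and then $\la\unlhd(1^d,0^{n-d})$. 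So $\biota_i(1^d)$ is the unique $\leq_I$-maximal weight.

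For the tilting statement it then suffices to show that $\scrT^d_i$ has a standard filtration and that $\scrT^d_i\simeq(\scrT^d_i)^{\tau_{n,d}}$: by \eqref{ETauDual} the latter forces $\scrT^d_i$ to also have a costandard filtration, hence to be tilting. For the standard filtration, if $i\ge1$ then $\Pi\Tiltz(i)_R=\EZig_R\ze_{i-1}$, which equals $\EZig_R e$ for the idempotent $e=\ze_{i-1}\in\a_R$, and $\zb e\in\{\zb,0\}$ for all $\zb\in\zB$; hence Lemma~\ref{GaCnIdem} and base change give $\scrT^d_i\simeq T^\EZig(n,d)\,\eta^{\ze_{i-1}^d}_{1^d,1^d}$, a direct summand of the regular module, which, being a projective module over the quasi-hereditary algebra $T^\EZig(n,d)$ (recall $d\le n$), has a standard filtration. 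If $i=0$ then $\Pi\Tiltz(0)=\Pi\zL(0)=\Pi\De(0)$ and $\scrT^d_0$ is, up to a parity shift, the standard module $\De(\biota_0(1^d))$ --- this follows from the explicit realisation of standard modules, cf.\ \cite[Theorem~6.17]{KMgreen3}; concretely one matches the basis $\{y_\bb\}$ above with the tableau basis $\{v_\T\mid\T\in\Std^{\zX}(\biota_0(1^d))\}$, both being indexed by strictly increasing $d$-tuples in $[n]$ with matching weights --- so $\scrT^d_0$ trivially has a standard filtration.

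For the self-duality I would apply Lemma~\ref{GaTiForm} with $A_R=M_n(\EZig_R)$, $V_R=\Co_n(\Pi\Tiltz(i)_R)$ and the anti-involution on $M_n(\EZig_R)$ obtained from $\tau$ by transposing matrix positions, which restricts on $T^\EZig(n,d)_R=\tilde\Ga^dM_n(\EZig_R)$ to $\tau_{n,d}$. It is enough to equip $\Pi\Tiltz(i)_R$ with a non-degenerate, even, super(anti)symmetric, $\tau$-contravariant bilinear form whose $\a$-part is isotropic and perfectly paired by the form with the $\c$-part, since the block-diagonal form $(v^{(r)},w^{(s)}):=\delta_{rs}(v,w)$ on $\Co_n(\Pi\Tiltz(i)_R)$ inherits all of these properties; Lemma~\ref{GaTiForm} then produces a non-degenerate $\tau_{n,d}$-contravariant form $(\cdot,\cdot)_\sim$ on $\tilde\Ga^d\Co_n(\Pi\Tiltz(i)_R)$, hence (after base change) on $\scrT^d_i$, giving $\scrT^d_i\simeq(\scrT^d_i)^{\tau_{n,d}}$. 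For $i=0$ the module $\Pi\zL(0)_R$ is one-dimensional odd with zero even part, and $(\zv_0,\zv_0):=1$ is a (necessarily superantisymmetric) $\tau$-contravariant form for which the hypotheses hold vacuously. For $i\ge1$ I would define on $\EZig_R\ze_{i-1}$ the form $(\cdot,\cdot)$ by $(\ze_{i-1},\zc_{i-1})=1=-\,(\zc_{i-1},\ze_{i-1})$, $(\za_{i-2,i-1},\za_{i-2,i-1})=(\za_{i,i-1},\za_{i,i-1})=1$, and all remaining pairings of basis elements $0$; a direct computation using $\tau(\ze_j)=\ze_j$, $\tau(\za_{a,b})=\za_{b,a}$, $\tau(\zc_j)=-\zc_j$ and the equivalence of length-two cycles shows this is non-degenerate, even, superantisymmetric and $\tau$-contravariant, with $\a$-part $R\ze_{i-1}$ isotropic and perfectly paired with the $\c$-part $R\zc_{i-1}$, as required.

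The step I expect to be the main obstacle is this last one: checking in full that the stated form on $\EZig_R\ze_{i-1}$ meets all the hypotheses of Lemma~\ref{GaTiForm} --- in particular $\tau$-contravariance (a routine but somewhat lengthy case analysis against each generator acting on each basis element) and non-degeneracy over $R$ (so that the Gram determinant is a unit, which is needed for the conclusion to survive base change to $\F$) --- and, as a secondary point, pinning down precisely the identification $\scrT^d_0\simeq\De(\biota_0(1^d))$ (up to parity shift) used in the $i=0$ case of the standard filtration.
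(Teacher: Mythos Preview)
Your proposal is correct and follows essentially the same approach as the paper: for $i\ge 1$ you use Lemma~\ref{GaCnIdem} to realise $\scrT^d_i$ as a summand of the regular module (hence $\Delta$-filtered), and then apply Lemma~\ref{GaTiForm} with exactly the same superantisymmetric $\tau$-contravariant form on $\Pi\Tiltz(i)_R$ to obtain $\tau_{n,d}$-self-duality, while your weight analysis via the explicit basis $\{y_\bb\}$ matches the paper's. The only minor differences are cosmetic: the paper treats $i=0$ more directly by observing that $\biota_0(1^d)$ is $\leq_I$-minimal, so $\scrT^d_0\cong L(\biota_0(1^d))\simeq\De(\biota_0(1^d))\simeq\Tilt(\biota_0(1^d))$, thereby sidestepping both the explicit identification with $\De(\biota_0(1^d))$ and the need for a form on $\Pi\zL(0)$; and it separates out $i=1$ (where $\za_{i-2,i-1}$ is absent from the basis and hence from the form), which you should also do.
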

\begin{proof}
Suppose first that $i=0$. Since $\biota_0 (1^d)$ is minimal in $\La^+_I(n, d)$ it follows that $\Tilt (\biota_0(1^d)) \simeq \De (\biota_0(1^d))\simeq L (\biota_0(1^d))$.
Using the assumption $d\leq n$, we note that $\biota_0(1^d)$ is the only weight of $\scrT^d_0$ that lies in $\La^+_I(n, d)$, and the corresponding weight multiplicity is $1$, so 
\begin{equation}\label{ETilt0}
	\scrT^d_0 \cong L (\biota_0(1^d))\simeq \Tilt(\biota_0((1^d)))\simeq \De (\biota_0((1^d))). 
\end{equation}

Let now $i > 0$. Since $\Pi\Tiltz(i)_R=\EZig_R\ze_{i-1}$, we have $\Co_n(\Pi\Tiltz(i)_R)\simeq M_n(\EZig_R)\xi_{1,1}^{\ze_{i-1}}$, and so 
by Lemma~\ref{GaCnIdem} and extension of scalars, we have 
\begin{equation}\label{ETiltNonZero}
	\scrT^d_i \simeq T^\EZig(n,d) \eta^{\ze_{i-1}^d}_{1^d ,1^d}=
	T^\EZig(n,d) \eta_{\biota_{i-1}(d)}.
\end{equation}
In particular, $\scrT^d_i$ is projective, and thus has a standard filtration. To prove that $\scrT^d_i$ also has a costandard filtration, it suffices to show that it is $\tau_{n,d}$-self-dual, or equivalently possesses a non-degenerate $\tau_{n,d}$-contravariant bilinear form. 

To construct this form we work over $R$. 
Recall that for $i>1$, we have $\Pi\Tiltz(i)_R = \EZig_R \ze_{i-1}$ has basis $\zB^{\Pi\Tiltz(i)} := \{ \ze_{i-1}, \zc_{i-1}, \za_{i-2, i-1}, \za_{i, i-1} \}$.
Consider the bilinear form $( \cdot, \cdot)$ on $\Pi\Tiltz(i)_R$ such that 
$$(\ze_{i-1}, \zc_{i-1}) = - (\zc_{i-1},\ze_{i-1})= (\za_{i-2, i-1}, \za_{i-2, i-1}) = (\za_{i, i-1}, \za_{i, i-1}) = 1,$$
and all the other pairings of basis elements are $0$. Note that this form is even, non-degenerate, $\tau$-contravariant, and superantisymmetric.
Extending this form in the obvious way to $\Co_n (\Pi\Tiltz(i)_R)$ results in a non-degenerate, superantisymmetric, $\tau_{n,1}$-contravariant form again denoted $(\cdot, \cdot)$. Note that this form satisfies the assumptions of 
Lemma ~\ref{GaTiForm}. Applying Lemma ~\ref{GaTiForm}, we obtain an even, non-degenerate, $\tau_{n,d}$-contravariant form on $\tilde \Ga^d \Co_n ( \Pi\Tiltz(i)_R)$. 
Extending scalars, we deduce that, $\scrT^d_i$ is $\tau_{n,d}$-self-dual.

Denote by $v_r^\zb\in \Co_n (\Pi\Tiltz(i)_R)$ the column vector with $\zb\in \Pi\Tiltz(i)_R$ in the $r$th position and $0$s elsewhere. 
To see that the unique maximal weight of $\scrT^d_i$ is $\biota_i(1^d)$, 
observe that the unique maximal weight vector of $\Pi\Tiltz(i)_R$ is $\za_{i, i-1}$, which is odd, and so the vector $v_1^{\za_{i, i-1}}*\cdots*v_d^{\za_{i, i-1}}\in \tilde \Ga^d \Co_n(\Pi\Tiltz(i)_R)$ 
has weight $\biota_i(1^d)$ and all other weight vectors appearing in $\tilde \Ga^d \Co_n(\Pi\Tiltz(i)_R)$ have smaller weight.

The case $i=1$ is similar to the case $i>1$ but $\Pi\Tiltz(1)_R = \EZig_R \ze_{0}$ has basis $\zB^{\Pi\Tiltz(1)} := \{ \ze_0, \zc_0, \za_{1,0} \}$, and we use the form such that 
$$(\ze_0, \zc_0) = - (\zc_0,\ze_0) = (\za_{1, 0}, \za_{1, 0}) = 1$$
are the only non-trivial pairings of basis elements. 
\end{proof}

Recalling that $\Tiltz_R$ is a $(\EZig_R,\EZig'_R)$-bisupermodule, we can now consider $M_n(\Tiltz_R)$ as an $(M_n(\EZig_R),M_n(\EZig'_R))$-bisupermodule in the obvious way. Take 
$M_n(\Tiltz_R)_\a:=M_n(\Tiltz_{R,\a})$ and $M_n(\Tiltz_R)_\c:=M_n(\Tiltz_{R,\c})$.  
In view of Lemma~\ref{GaTiMod} (and its right module analogue), $\tilde\Ga^dM_n (\Tiltz_R)$ is a $(T^\EZig(n,d)_R,T^{\EZig'}(n,d)_R)$-bisupermodule. 
We now extend the scalars from $R$ to $\F$ to get the $(T^\EZig(n,d),T^{\EZig'}(n,d))$-bisupermodule 
$${\mathscr T}:=\F\otimes_R \tilde\Ga^dM_n (\Tiltz_R).$$

For each composition $\mu \in \La(n,d)$ define $\scrT_i^{\mu} := \scrT_i^{\mu_1} \otimes \cdots \otimes \scrT_i^{\mu_n}$. 
Furthermore, for each multicomposition $\bmu=(\mu^{(0)},\dots,\mu^{(\ell)}) \in \La^I(n,d)$ define $\scrT^{\bmu} := \scrT_0^{\mu^{(0)}} \otimes \cdots \otimes \scrT_\ell^{\mu^{(\ell)}}$.

Since $\Tiltz_R = \bigoplus_{i\in I} \Pi \Tiltz(i)_R$, 
as left supermodules over $M_n(\EZig_R)$, we have 
$$
M_n(\Tiltz_R)\simeq\Co_n(\Tiltz_R)^{\oplus n}\simeq\bigoplus_{i\in I}\Co_n(\Pi\Tiltz(i)_R)^{\oplus n}.
$$
Now, using Lemmas~\ref{GaTiSum} and extending scalars,
we have as left $T^\EZig(n,d)$-supermodules:
\begin{equation}\label{ScrTDecomp}
\scrT \simeq \bigoplus_{\bmu \in \La^I(n,d)}\scrT^{\bmu}.
\end{equation}

For $\bla=(\la^{(0)},\dots,\la^{(\ell)})\in\La^I_+(n,d)$, we define the {\em conjugate multipartition} 
\begin{equation}\label{EBla'}
\bla' :=((\la^{(0)})',\dots,(\la^{(\ell)})')\in\La^I_+(n,d).
\end{equation}

\begin{Proposition} \label{PFullTilt}
As a left $T^\EZig(n,d)$-supermodule, ${\mathscr T}$ is a full tilting supermodule. 
\end{Proposition}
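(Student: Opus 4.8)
The plan is to use the direct sum decomposition \eqref{ScrTDecomp}, $\scrT\simeq\bigoplus_{\bmu\in\La^I(n,d)}\scrT^\bmu$, and to split the argument into two parts: (i) every $\scrT^\bmu$ with $\bmu\in\La^I(n,d)$ is a tilting $T^\EZig(n,d)$-supermodule; and (ii) for every $\bla\in\La^I_+(n,d)$ the indecomposable tilting $\Tilt(\bla)$ occurs as a direct summand of $\scrT^{\bla'}$. Since $\bla'\in\La^I_+(n,d)\subseteq\La^I(n,d)$, the supermodule $\scrT^{\bla'}$ is itself one of the summands in \eqref{ScrTDecomp}, so (i) and (ii) together say precisely that $\scrT$ is a direct sum of tilting modules in which every $\Tilt(\bla)$ appears, i.e.\ that $\scrT$ is full tilting.

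For (i) I would argue that each factor $\scrT^e_i$ ($i\in I$, $0\le e\le d$) is a tilting $T^\EZig(n,e)$-supermodule; this is Proposition~\ref{TZTilt} applied with $e$ in place of $d$, whose proof uses only $e\le n$. Since $\scrT^\bmu=\bigotimes_{i\in I}\bigotimes_{j\in[n]}\scrT^{\mu^{(i)}_j}_i$ and the degrees of the factors add up to $|\bmu|=d\le n$, iterating Corollary~\ref{CTensTilt} yields that $\scrT^\bmu$ is tilting over $T^\EZig(n,d)$, and hence so is $\scrT$.

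For (ii) the key is a general fact about tilting modules over $T^\EZig(n,d)$, which I would establish first: if $T$ is tilting, $\bla$ is a weight of $T$ (that is $\eta_\bla T\ne 0$), and no weight of $T$ is strictly larger than $\bla$ with respect to $\leq_I$, then $\Tilt(\bla)$ is a direct summand of $T$. Indeed, write $T\cong\bigoplus_\bnu\Tilt(\bnu)^{\oplus m_\bnu}$; the inclusion $\De(\bnu)\subseteq\Tilt(\bnu)$ together with the fact that $\bnu$ is a weight of $\De(\bnu)$ forces $m_\bnu=0$ for every $\bnu>_I\bla$; since all weights of a standard module $\De(\bnu)$ are $\leq_I\bnu$ and $\bnu$ is among them — part of the highest weight structure of $T^\EZig(n,d)$, visible from the tableau basis \eqref{EBasisDe}, \eqref{EWtvT}, cf.\ \cite{KMgreen3,KW1} — a standard filtration of $T$ contains $\De(\bla)$ as a factor, so $(T:\De(\bla))\ge 1$; and $(T:\De(\bla))=\sum_\bnu m_\bnu(\Tilt(\bnu):\De(\bla))$ with $(\Tilt(\bnu):\De(\bla))\ne 0$ only for $\bla\leq_I\bnu$, so by the vanishing above only $\bnu=\bla$ contributes and $(T:\De(\bla))=m_\bla$, giving $m_\bla\ge 1$. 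Applying this with $T=\scrT^{\bla'}$ reduces (ii) to checking that $\bla$ is the unique $\leq_I$-maximal weight of $\scrT^{\bla'}$.

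To see the latter, I would use Lemma~\ref{LChProd}: $\ch\scrT^{\bla'}$ is the product of the characters $\ch\scrT^{(\la^{(i)})'_j}_i$ over $i\in I$, $j\in[n]$ (each $(\la^{(i)})'$ being a partition with at most $n$ parts, as $d\le n$), so the weights of $\scrT^{\bla'}$ are exactly the sums of the weights of the factors. As each factor has unique maximal weight $\biota_i(1^{e})$ with $e=(\la^{(i)})'_j$ by Proposition~\ref{TZTilt}, and $\leq_I$ is compatible with addition of multicompositions strongly enough that a sum of the respective unique maxima is again the unique maximum of the set of sums, the unique maximal weight of $\scrT^{\bla'}$ equals $\sum_{i\in I}\sum_{j\in[n]}\biota_i(1^{(\la^{(i)})'_j})=\sum_{i\in I}\biota_i(\sum_{j\in[n]}1^{(\la^{(i)})'_j})=\sum_{i\in I}\biota_i(\la^{(i)})=\bla$, using the identity $\sum_j 1^{\nu_j}=\nu'$ for a partition $\nu$ (here $\nu=(\la^{(i)})'$, so that $\sum_j 1^{(\la^{(i)})'_j}=((\la^{(i)})')'=\la^{(i)}$). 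The step I expect to be the main obstacle is this last piece of bookkeeping: verifying cleanly that the composite order $\leq_I$ — dominance on the tuples $\|\cdot\|$, refined by componentwise dominance — interacts with addition so that unique maxima add up, together with the combinatorial identity $\sum_j 1^{\nu_j}=\nu'$. The tilting-theoretic steps, the general fact above, and the degenerate cases ($i=0$, small $\ell$) should all be routine, the last being subsumed in Proposition~\ref{TZTilt}.
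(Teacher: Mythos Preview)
Your proposal is correct and follows essentially the same approach as the paper: both use the decomposition \eqref{ScrTDecomp}, deduce that each $\scrT^\bmu$ is tilting via Proposition~\ref{TZTilt} and Corollary~\ref{CTensTilt}, and then, for each $\bla\in\La^I_+(n,d)$, take $\bmu=\bla'$ and compute the unique maximal weight of $\scrT^{\bla'}$ to be $\sum_{i,j}\biota_i(1^{(\la^{(i)})'_j})=\bla$. The paper simply states the general fact that a tilting module with unique maximal weight $\bla$ has $\Tilt(\bla)$ as a summand and the additivity of maximal weights under tensor products without further justification, whereas you spell these out; the ``main obstacle'' you flag is routine since $\leq_I$ is visibly compatible with addition (both $\unlhd_I$ on $\|\cdot\|$ and componentwise dominance are).
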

\begin{proof}
Note that each $\scrT^{\bmu}$
is tilting by Proposition~\ref{TZTilt} and Corollary~\ref{CTensTilt}. So $\scrT$  is tilting by (\ref{ScrTDecomp}). 
To show that $\scrT$ is full tilting, it suffices for each
$\bla \in \La^+_I(n, d)$ to find a summand 
$\scrT^{\bmu}$ in (\ref{ScrTDecomp}) which has maximal weight $\bla$. Fix $\bla=(\la^{(0)},\dots,\la^{(\ell)}) \in \La^+_I(n, d)$ and
take $\bmu = \bla'$. By Proposition~\ref{TZTilt} again, $\scrT_i^{s}$ has unique maximal weight $\biota_i(1^{s})$ for each $s \in \Z_{> 0}$. So 
$$\sum_{i=0}^\ell\sum_{r=1}^n\biota_i(1^{\mu^{(i)}_r}) = \sum_{i=0}^\ell\biota_i(\la^{(i)}) =\bla$$
is the unique maximal weight of $\scrT^{\bmu}$. 
\end{proof}

\begin{Corollary} \label{Faith}
As a left $T^\EZig(n,d)$-supermodule and as a right $T^{\EZig'}(n,d)$-supermodule, ${\mathscr T}$ is faithful. 
\end{Corollary}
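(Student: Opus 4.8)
My plan is to derive both assertions from a single general fact: a full tilting supermodule over a based quasi-hereditary superalgebra is faithful. For the left structure I would invoke Proposition~\ref{PFullTilt}, which already says $\scrT$ is a full tilting supermodule over $A:=T^\EZig(n,d)$. For the right structure I would first record that the right $T^{\EZig'}(n,d)$-module $\scrT$ is also full tilting: the right-hand analogues of Proposition~\ref{TZTilt}, Corollary~\ref{CTensTilt} and Proposition~\ref{PFullTilt} go through verbatim once one uses that, by \S\ref{SSZig}, $\Tiltz_R=\bigoplus_{i\in I}\Pi\Tiltz'(i)_R$ is a full tilting module also as a right $\EZig'_R$-supermodule and that $\EZig'$ is based quasi-hereditary with the transported heredity data $I',\zX',\zY'$.

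To see that a full tilting supermodule $\scrT$ over a based quasi-hereditary superalgebra $A$ is faithful, I would argue as follows. Being based quasi-hereditary, ${}_AA$ carries a standard filtration: filtering $A$ along the heredity chain $A\supseteq A^{>0}\supseteq A^{>1}\supseteq\cdots$ of Definition~\ref{DCC} produces sections isomorphic to direct sums of the standard modules $\Delta(i)$. By the integral tilting theory recalled in \S\ref{SSBQHA} (see \cite[\S4]{Rouquier}), any supermodule with a standard filtration embeds into a tilting supermodule, so ${}_AA\into \Tilt^\circ$ for some tilting $\Tilt^\circ$. Since every tilting supermodule is a direct sum of the indecomposable tiltings $\Tilt(\bla)$ (parity shifts are irrelevant here, as they do not change annihilators) and $\scrT$ is full tilting, $\Tilt^\circ$ is isomorphic to a direct summand of $\scrT^{\oplus N}$ for $N$ large. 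Hence ${}_AA\into\scrT^{\oplus N}$, and therefore $\Ann_A(\scrT)=\Ann_A(\scrT^{\oplus N})\subseteq\Ann_A({}_AA)=0$. Applying this to $A=T^\EZig(n,d)$ and then, symmetrically, to $A=T^{\EZig'}(n,d)$ acting on the right, yields the Corollary.

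The only nontrivial ingredient is the tilting-envelope statement — that a $\Delta$-filtered (super)module embeds into a tilting (super)module — so the hard part is really just to make sure the integral/super version of Ringel's construction is available in the form needed; I would take it from \cite[\S4]{Rouquier}. Everything else (identifying ${}_AA$ as $\Delta$-filtered, and absorbing an arbitrary tilting module as a summand of a multiple of $\scrT$, on both sides) is routine bookkeeping, and I do not anticipate any new computation. I would also point out that this argument is deliberately independent of the identification $T^{\EZig'}(n,d)\cong\End_{T^\EZig(n,d)}(\scrT)^\sop$ established later, so there is no circularity in using Corollary~\ref{Faith} in that computation.
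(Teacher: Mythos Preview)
Your proposal is correct and follows essentially the same route as the paper: reduce to the general fact that a full tilting supermodule is faithful, then invoke Proposition~\ref{PFullTilt} (and its right-module analogue for $T^{\EZig'}(n,d)$). The only difference is cosmetic---the paper cites \cite[Lemma~6]{Ringel} for ``full tilting $\Rightarrow$ faithful'', whereas you sketch that argument directly via the embedding ${}_AA\into\Tilt^\circ\into\scrT^{\oplus N}$.
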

\begin{proof}
As a left $T^\EZig(n,d)$-supermodule, ${\mathscr T}$ is faithful since it is full tilting by Proposition~\ref{PFullTilt} (a full tilting supermodule is faithful for example by \cite[Lemma 6]{Ringel}). The second statement follows similarly from the right module analogue of that proposition. 
\end{proof}

\subsection{\boldmath $T^Z(n,d)$ is Ringel self-dual}
\label{TZSelf}
In view of Corollary~\ref{Faith}, we have an embedding of $T^{\EZig'}(n,d)$ into $\End_{T^{\EZig}(n,d)} (\scrT)^\sop$. To prove that this embedding is an isomorphism, we now compute the dimension of $\End_{T^{\EZig}(n,d)} (\scrT)$.

Recalling (\ref{EAl}), for $\bla\in\La^I_+(n,d)$ and  $\bmu \in \La^I(n,d)$, let 
$$k_{\bla, \bmu}:=|\Std^\zX(\bla,\bmu)|.$$
By (\ref{EBasisDe}), (\ref{EWtvT}) and Lemma~\ref{LDeltaNablaChar}, we have 
\begin{equation}\label{E091221}
k_{\bla, \bmu}=\dim\eta_\bmu\De(\bla)=\dim\eta_\bmu\nabla(\bla). 
\end{equation}

Let $i \in I$. If $i\neq 0$, we define
\[ \bbeta_i(d,s) = \biota_{i-1}((s)) + \biota_i((1^{d-s})) \in \La^I_+(n,d)
\]
for all $0\leq s \leq d$
We also define 
$$
\bbeta_0(d,0):=\biota_0((1^{d})). 
$$
We define by $\Xi_{d,i}$ to be the set of all $\bbeta_i(d,s)$'s, i.e.
$$\Xi_{d,i}:=
\left\{
\begin{array}{ll}
\{ \bbeta_i(d,s)\mid 0\leq s\leq d\} &\hbox{if $i\neq 0$,}\\
\{\bbeta_0(d,0)\} &\hbox{if $i=0$.}
\end{array}
\right.
$$

\begin{Lemma} \label{1DKostka} 
Let $\bbeta \in \La^I_+(n,d)$ and $i\in I$. 
Then 
\[ (\scrT_i^d : \De (\bbeta) ) = \begin{cases}
	1 &\text{ if } \bbeta \in \Xi_{d,i},\\
	0 &\text{ otherwise}.\\
\end{cases}\]
\end{Lemma}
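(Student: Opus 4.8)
The plan is to split into the cases $i=0$ and $i>0$; in the second case I would reduce the computation of $(\scrT^d_i:\De(\bbeta))$ to counting standard $\zX$-colored tableaux of a prescribed left weight. The case $i=0$ is immediate: by (\ref{ETilt0}) we have $\scrT^d_0\cong\De(\biota_0(1^d))$, so $(\scrT^d_0:\De(\bbeta))=\de_{\bbeta,\biota_0(1^d)}$, while $\Xi_{d,0}=\{\bbeta_0(d,0)\}=\{\biota_0(1^d)\}$, which gives the claim.

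Now suppose $i>0$. The first step is to invoke (\ref{ETiltNonZero}) to identify $\scrT^d_i$ with the projective module $T^\EZig(n,d)\,\eta_{\biota_{i-1}(d)}$. Since $\scrT^d_i$ is tilting by Proposition~\ref{TZTilt}, formula (\ref{EDeMult}) applies, and combining it with the standard identification $\Hom_A(Ae,M)\cong eM$ yields $(\scrT^d_i:\De(\bbeta))=\dim\Hom_{T^\EZig(n,d)}(\scrT^d_i,\nabla(\bbeta))=\dim\,\eta_{\biota_{i-1}(d)}\nabla(\bbeta)$. Applying Lemma~\ref{LDeltaNablaChar} together with (\ref{E091221}) turns the right-hand side into the Kostka-type number $k_{\bbeta,\biota_{i-1}(d)}=|\Std^\zX(\bbeta,\biota_{i-1}(d))|$, so it remains to show that this number is $1$ when $\bbeta\in\Xi_{d,i}$ and $0$ otherwise.

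The second and main step is the combinatorial analysis of $\Std^\zX(\bbeta,\biota_{i-1}(d))$, using the explicit heredity data $\zX(j)$ from Lemma~\ref{LAQH}. By axiom (c) of Definition~\ref{DCC}, a box of a standard tableau $\T$ lying in component $j$ of $\bbeta$ and carrying a letter of color $\ze_j$ contributes to slot $j$ of the left weight, whereas a box of color $\za_{j-1,j}$ contributes to slot $j-1$; since the target left weight $\biota_{i-1}(d)$ is concentrated in slot $i-1$ with only the first row-index occurring, every box of $\T$ must have row-index $1$, every box in component $i-1$ must be $1^{\ze_{i-1}}$, every box in component $i$ must be $1^{\za_{i-1,i}}$, and $\beta^{(j)}=0$ for all $j\notin\{i-1,i\}$. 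The row and column rules in the definition of a standard $\zX$-colored tableau then force admissibility only when $\beta^{(i-1)}=(s)$ and $\beta^{(i)}=(1^{d-s})$ for some $0\le s\le d$: a repeated $1^{\ze_{i-1}}$ is allowed along a row but not down a column because $\ze_{i-1}\in\zX(i-1)_\0$, while a repeated $1^{\za_{i-1,i}}$ is allowed down a column but not along a row because $\za_{i-1,i}\in\zX(i)_\1$ (here $d\le n$ guarantees the column $(1^{d-s})$ fits). In each admissible case $\bbeta=\bbeta_i(d,s)\in\Xi_{d,i}$ and the forced filling is the unique element of $\Std^\zX(\bbeta,\biota_{i-1}(d))$, whereas for every other $\bbeta$ this set is empty.

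I expect this last combinatorial step to be the only delicate point. One must keep careful track of which colors of $\zX(j)$ feed which slot of the left weight — this is exactly where axiom (c) is used to pin down the indices $i_c$ — and, more importantly, of the asymmetry between the row rule and the column rule, since it is the parity of $\ze_{i-1}$ versus that of $\za_{i-1,i}$ that collapses component $i-1$ to a single row and component $i$ to a single column. It is also worth noting that the boundary values $i=1$ (where $\zX(0)=\{\ze_0\}$ contains no arrow color) and $i=\ell$ (where the relation $\za_{\ell,\ell-1}\za_{\ell-1,\ell}=0$ does not affect the basis $\{\ze_{\ell-1},\zc_{\ell-1},\za_{\ell-2,\ell-1},\za_{\ell,\ell-1}\}$ of $\Pi\Tiltz(\ell)=\EZig\ze_{\ell-1}$) introduce no genuine exceptions to the argument.
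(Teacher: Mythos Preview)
Your proof is correct and follows essentially the same approach as the paper: the case split $i=0$ versus $i>0$, the reduction via (\ref{ETiltNonZero}) and (\ref{EDeMult}) to the dimension of a weight space, and the identification with $k_{\bbeta,\biota_{i-1}(d)}$ through (\ref{E091221}) all match the paper's argument. The paper dispatches the final combinatorial claim in a single sentence (``It remains to observe that $k_{\bbeta,\biota_{i-1}(d)}=1$ if $\bbeta=\bbeta_i(d,s)$ for some $0\le s\le d$ and $0$ otherwise''), whereas you have spelled out in full how the left-weight constraint forces the row index to be $1$, the colors to be $\ze_{i-1}$ and $\za_{i-1,i}$ in components $i-1$ and $i$ respectively, and how the parities of these colors force the shapes $(s)$ and $(1^{d-s})$; this expanded analysis is correct and is exactly what the paper's terse remark is asserting.
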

\begin{proof}
By (\ref{ETilt0}), we have $\scrT_0^d \cong \De(\biota_0((1^d)))$, so we may assume that $i \neq 0$. Then by (\ref{ETiltNonZero}), we have $\scrT_i^d \simeq T^\EZig(n,d) \eta_{\biota_{i-1}(d)}$. 
Now, using (\ref{EDeMult}) and (\ref{E091221}), we get 
\begin{align*}
	(\scrT_i^d : \De (\bbeta)) &= \dim \Hom_{T^\EZig(n,d)}(\scrT_i^d, \nabla (\bbeta)) 
	\\
	&= \dim \Hom_{T^\EZig(n,d)}( T^\EZig(n,d)\eta_{\biota_{i-1}(d)}, \nabla (\bbeta))
	\\
	&= \dim \eta_{\biota_{i-1}(d)}  \nabla (\bbeta) 
	\\
	&= k_{\bbeta, \biota_{i-1}(d)}. 
\end{align*}
It remains to observe that $k_{\bbeta, \biota_{i-1}(d)}=1$ if $\bbeta = \bbeta_i(d,s)$ for some $0 \leq s \leq d$ and $k_{\bbeta, \biota_{i-1}(d)}=0$ otherwise.
\end{proof}

Let $0\leq r\leq d$. Recalling (\ref{EBoldLR}), our next goal is to compute the Littlewood-Richardson coefficient $c^\bla_{\balpha,\bbeta}$ for all $\bla\in\La^I_+(n,d)$, $\balpha\in \La^I_+(n,d-r)$ and $\bbeta\in \Xi_{r,i}$. Let $i\in I$ and $\bbeta=\bbeta_i(r,s)\in \Xi_{r,i}$, in particular, $0\leq s\leq r$, and $s=0$ if $i=0$. We define 
$\Om^{\bla}_\bbeta$ to be the set of all $\balpha = (\al^{(0)}, \ldots, \al^{(\ell)}) \in \La^I_+(n, d-r)$ such that 
$\al^{(j)} = \la^{(j)}$ for all $j \notin \{i-1, i\}$,
$[\al^{(i-1)}]$ is obtained from $[\la^{(i-1)}]$ by removing $s$ boxes from distinct columns, and
$[\al^{(i)}]$ is obtained from $[\la^{(i)}]$ by removing $r-s$ boxes from distinct rows (if $i=0$, then the condition on $[\al^{(i-1)}]$ should be dropped).

\begin{Lemma}\label{LRBeta}
Let $0\leq r\leq d$, $i\in I$, $\bla\in\La^I_+(n,d)$, $\balpha\in \La^I_+(n,d-r)$ and $\bbeta= \Xi_{r,i}$. 
Then
\[ c_{\balpha, \bbeta}^{\bla} = \begin{cases}
	1 &\text{ if } \balpha \in \Om^{\bla}_\bbeta\\
	0 &\text{ otherwise.}\\
\end{cases}\]
\end{Lemma}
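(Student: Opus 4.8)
The plan is to deduce the lemma directly from the Pieri rule by unwinding the definition (\ref{EBoldLR}) of $c^{\bla}_{\balpha,\bbeta}$ coordinatewise. Write $\bbeta = \bbeta_i(r,s)$ with $0 \le s \le r$ (and $s = 0$, $i = 0$ in the degenerate case). Unpacking $\bbeta_i(r,s) = \biota_{i-1}((s)) + \biota_i((1^{r-s}))$ together with (\ref{EIota}), the components of $\bbeta$ are $\beta^{(i-1)} = (s)$, $\beta^{(i)} = (1^{r-s})$, and $\beta^{(j)} = \varnothing$ for all other $j$; for $i = 0$ the only nonzero component is $\beta^{(0)} = (1^r)$. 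Hence, by (\ref{EBoldLR}),
$$
c^{\bla}_{\balpha,\bbeta} = \Bigl(\prod_{j \notin\{i-1,i\}} c^{\la^{(j)}}_{\al^{(j)},\varnothing}\Bigr)\; c^{\la^{(i-1)}}_{\al^{(i-1)},(s)}\; c^{\la^{(i)}}_{\al^{(i)},(1^{r-s})},
$$
with the factor $c^{\la^{(i-1)}}_{\al^{(i-1)},(s)}$ omitted when $i = 0$.

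The first step is the trivial observation that $c^{\la}_{\mu,\varnothing} = \delta_{\la,\mu}$, so the product over $j \notin \{i-1,i\}$ contributes $1$ exactly when $\al^{(j)} = \la^{(j)}$ for all such $j$ and $0$ otherwise; this is precisely the first condition in the definition of $\Om^{\bla}_\bbeta$. The second step is to invoke the classical Pieri rule (see e.g. \cite[\S\,I.9]{Mac}): $c^{\la}_{\mu,(s)} \in \{0,1\}$, and it equals $1$ if and only if $[\la]/[\mu]$ is a horizontal strip of $s$ boxes, equivalently $[\mu]$ is obtained from $[\la]$ by deleting $s$ boxes lying in pairwise distinct columns; dually $c^{\la}_{\mu,(1^t)} = 1$ if and only if $[\la]/[\mu]$ is a vertical strip of $t$ boxes, equivalently $[\mu]$ is obtained from $[\la]$ by deleting $t$ boxes lying in pairwise distinct rows. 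Applying these with $t = r - s$ to the two remaining factors shows that the displayed product equals $1$ precisely when $\balpha$ meets all the conditions defining $\Om^{\bla}_\bbeta$, and equals $0$ otherwise, which is exactly the assertion.

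I do not expect any real obstacle: the content is entirely in the Pieri rule, which is standard, and the factorization over $I$ is built into (\ref{EBoldLR}). The only points needing care are purely bookkeeping — matching the ``distinct columns / distinct rows'' wording in the definition of $\Om^{\bla}_\bbeta$ with the horizontal/vertical strip conditions of the Pieri rule, correctly reading off the components of $\bbeta_i(r,s)$ from (\ref{EIota}) and the abbreviations $\biota_i(d)$, $\biota_i(1^d)$ fixed in \S\ref{SSComb}, and treating the boundary case $i = 0$, where the factor $c^{\la^{(i-1)}}_{\al^{(i-1)},(s)}$ is absent and, accordingly, the condition on $[\al^{(i-1)}]$ in $\Om^{\bla}_\bbeta$ is dropped. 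One should also note in passing that deleting boxes from a partition with at most $n$ rows again yields such a partition, so every $\balpha \in \Om^{\bla}_\bbeta$ indeed lies in $\La^I_+(n,d-r)$.
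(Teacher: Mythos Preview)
Your proposal is correct and is essentially the same as the paper's, which simply states that the result is an immediate consequence of the Littlewood-Richardson rule; you have just spelled out the relevant special case (the Pieri rule) in detail.
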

\begin{proof}
The result is an immediate consequence of the Littlewood-Richardson rule.
\end{proof}

For each $\bmu \in \La^I(n,d)$, define $\rev{\bmu}=(\rev{\mu}^{(0)},\dots,\rev{\mu}^{(\ell)}) \in \La^I(n,d)$ by setting $\rev{\mu}^{(i)}:=\mu^{(\ell - i)}$ for all $i\in I$. Recall (\ref{EBla'}).

\begin{Proposition} \label{GenKostka}
Let $\bla \in \La^I_+(n,d)$ and $\bmu \in \La^I(n,d)$. Then 
$
(\scrT^\bmu : \De (\bla)) = k_{\rev{\bla}', \rev{\bmu}}\,.
$
\end{Proposition}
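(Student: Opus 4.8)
The plan is to pass to formal characters and then carry out an iterated Littlewood--Richardson computation. By Proposition~\ref{TZTilt} each $\scrT_i^s$ is a tilting $T^\EZig(n,d)$-supermodule, so, writing $\scrT^{\bmu}=\bigotimes_{i\in I}\bigotimes_{r=1}^{n}\scrT_i^{\mu^{(i)}_r}$ and using $d\leq n$, Corollary~\ref{CTensTilt} (applied repeatedly) shows that $\scrT^{\bmu}$ is tilting; in particular it has a standard filtration, so $\ch\scrT^{\bmu}=\sum_{\bla\in\La^I_+(n,d)}(\scrT^{\bmu}:\De(\bla))\,\ch\De(\bla)$. Since the characters $\{\ch\De(\bla)\mid\bla\in\La^I_+(n,d)\}$ are linearly independent by Lemma~\ref{LChInd}, the proposition is equivalent to the identity
$$
\ch\scrT^{\bmu}\;=\;\sum_{\bla\in\La^I_+(n,d)}k_{\rev{\bla}',\,\rev{\bmu}}\;\ch\De(\bla)
$$
in $\Z[z_1,\dots,z_n]^{\otimes I}$. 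As each $\ch\De(\bla)$ is symmetric in each of the $\ell+1$ blocks of variables, both sides depend only on the rearrangement of $\bmu$ into a multipartition, so I may (and would) assume each $\mu^{(i)}$ is a partition.

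Next I would compute the left-hand side. By multiplicativity of characters (Lemma~\ref{LChProd}), $\ch\scrT^{\bmu}=\prod_{i\in I}\prod_{r=1}^{n}\ch\scrT_i^{\mu^{(i)}_r}$, and, since each $\scrT_i^s$ is tilting, $\ch\scrT_i^s=\sum_{\bbeta}(\scrT_i^s:\De(\bbeta))\,\ch\De(\bbeta)=\sum_{\bbeta\in\Xi_{s,i}}\ch\De(\bbeta)$ by Lemma~\ref{1DKostka}. I would then expand this product one factor at a time, rewriting each partial product $\ch\De(\balpha)\cdot\ch\De(\bbeta)=\sum_{\bnu}c^{\,\bnu}_{\balpha,\bbeta}\,\ch\De(\bnu)$ via Theorem~\ref{DeChTens} (legitimate because the total degree is $d\leq n$) and evaluating $c^{\,\bnu}_{\balpha,\bbeta}$ for $\bbeta\in\Xi_{\mu^{(i)}_r,i}$ by Lemma~\ref{LRBeta}. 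After fixing an order on the pairs $(i,r)$, this identifies $(\scrT^{\bmu}:\De(\bla))$ with the number of chains of multipartitions $\varnothing=\bla^{[0]}\subseteq\bla^{[1]}\subseteq\dots\subseteq\bla^{[N]}=\bla$, one step per pair $(i,r)$, in which the $(i,r)$-step adds, for some $0\leq s\leq\mu^{(i)}_r$, a horizontal strip of $s$ boxes to component $i-1$ and a vertical strip of $\mu^{(i)}_r-s$ boxes to component $i$ (only the component-$0$ vertical strip of $\mu^{(0)}_r$ boxes when $i=0$).

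It then remains to identify this chain count with $k_{\rev{\bla}',\rev{\bmu}}=|\Std^{\zX}(\rev{\bla}',\rev{\bmu})|$. A standard $\zX$-coloured $\rev{\bla}'$-tableau of left weight $\rev{\bmu}$ splits, in each component $j$, into its $\ze_j$-coloured part (even: weakly increasing rows, strictly increasing columns; registering in weight-component $j$) and its $\za_{j-1,j}$-coloured part (odd: strictly increasing rows, weakly increasing columns; registering in weight-component $j-1$), and in each component the cells carrying a fixed letter form a horizontal strip inside the $\ze$-part together with a vertical strip inside the $\za$-part. The factor $\scrT_i^{\bullet}$ is, for $i>0$, built from $\Pi\Tiltz(i)=\EZig\ze_{i-1}$, hence affects only the multipartition components $i-1$ and $i$ (adding row-type boxes to $i-1$ and column-type boxes to $i$), while $\scrT_0^{\bullet}$ only involves component $0$. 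The reversal $i\mapsto\ell-i$ arises from matching the component $(\la^{(\ell-j)})'$ of $\rev{\bla}'$ with the factors $\scrT_{\ell-j}^{\bullet}$ and $\scrT_{\ell-j+1}^{\bullet}$, and the transpose arises because the new boxes of component $i$ enter through exterior-power-type modules $\De(\biota_i(1^{\bullet}))$, so that transposing turns the ``vertical strip in distinct rows / horizontal strip in distinct columns'' conditions of Lemma~\ref{LRBeta} into, respectively, the row-strict/column-weak behaviour of the $\za$-coloured cells and the row-weak/column-strict behaviour of the $\ze$-coloured cells.

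Concretely I would construct this bijection by induction on $n$, peeling off from each component the cells carrying the largest letter: on the chain side these are precisely the boxes added by the last batch of steps (the factors $\scrT_i^{\mu^{(i)}_n}$), so removing them reduces to a smaller instance of the statement. I expect this final matching to be the main obstacle, the delicate points being the weight bookkeeping across each zigzag vertex and the verification that the two correspondences agree under the reduction — in short, that the reversal, the transpose, and the distinct-rows-versus-distinct-columns conditions are mutually consistent. Everything preceding it (the reduction to characters, the multiplicativity, and the iterated Littlewood--Richardson expansion) is formal, given Lemmas~\ref{LChProd}, \ref{LChInd}, \ref{1DKostka}, \ref{LRBeta} and Theorem~\ref{DeChTens}.
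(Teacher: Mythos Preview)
Your proposal is correct and takes essentially the same approach as the paper: both expand $(\scrT^{\bmu}:\De(\bla))$ by an iterated Littlewood--Richardson computation using Lemmas~\ref{1DKostka} and~\ref{LRBeta} (you via characters and Theorem~\ref{DeChTens}, the paper directly via \cite[Corollary~4.13]{KW1}, which is equivalent by Lemma~\ref{LChInd}), and then match the resulting chain count with $|\Std^{\zX}(\rev{\bla}',\rev{\bmu})|$ by an explicit bijection that peels off one layer at a time. The only difference is organizational: the paper inducts on the number of nonzero parts of $\bmu$, removing at each step the single last nonzero row of the maximal nonempty component, which keeps each bijective step localized to at most two components of the multipartition and so makes it slightly cleaner than your proposed simultaneous removal of the $n$th row from every component.
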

\begin{proof}
We proceed by induction on the number of non-zero parts of $\bmu$. To start the induction, we suppose that $\bmu$ has only one row, in which case $\scrT^\bmu\cong \scrT_i^d$ for some $i$, and the result follows from Lemma~\ref{1DKostka}. So we may assume that $\bmu$ has at least two rows.

Let now $i$ be maximal such that $\mu^{(i)} \neq \varnothing$ and pick the largest $t$ such that $\mu^{(i)}_t \neq 0$. Denote  $r := \mu^{(i)}_t$. Let $\nu^{(i)}$ be $\mu^{(i)}$ with last non-zero row removed: 
$$\nu^{(i)}:=(\mu^{(i)}_1,\dots,\mu^{(i)}_{t-1},0,\dots,0)\in\La(n,|\mu^{(i)}|-r),$$ 
and $\nu^{(j)}:=\mu^{(j)}$ for all $j\neq i$.
Set 
$$\bnu:=(\nu^{(0)},\dots,\nu^{(\ell)})\in\La^I(n,d-r).$$ 
Then $\scrT^{\bmu} \cong \scrT^{\bnu} \otimes \scrT_i^r$.

By (\ref{EXY}), we have $\zX(0) = \{ \ze_0\}$ and $\zX(i) = \{ \ze_i, \za_{i-1,i}\}$ for  $i \neq 0$. Recalling \S\ref{SSComb}, 
for $i \neq 0$, we put the total order on $\Alph_{\zX(i)}$ given by: $$1^{\ze_i} < \cdots < n^{\ze_i}< 1^{\za_{i-1,i}} < \cdots < n^{\za_{i-1, i}}.$$
And we endow $\Alph_{\zX(0)}$ with the order $1^{\ze_0} < \cdots < n^{\ze_0}$.

By \cite[Corollary (4.13)]{KW1}, inductive hypothesis, Lemma~\ref{1DKostka}, and Lemma~\ref{LRBeta} we have 
\begin{align*}
	(\scrT^\bmu : \De (\bla)) 
	&=
	(\scrT^{\bnu} \otimes \scrT_i^r: \De (\bla))
	\\
	&= \sum_{\balpha\in \La^I_+(n,d-r)}\sum_{\bbeta\in\La^I_+(n,r)} c_{\balpha, \bbeta}^\bla  (\scrT^{\bnu} : \De(\balpha)) (\scrT_i^r: \De(\bbeta)) 
	\\
	&= \sum_{\balpha\in \La^I_+(n,d-r)}\sum_{\bbeta\in\Xi_{r,i}}c_{\balpha, \bbeta}^\bla k_{\rev\balpha', \rev\bnu}
	\\
	&= \sum_{\bbeta\in\Xi_{r,i}}\sum_{\balpha\in \Om^{\bla}_\bbeta} k_{\rev\balpha', \rev\bnu}
	\\
	&= \sum_{\bbeta\in\Xi_{r,i}}\sum_{\balpha\in \Om^{\bla}_\bbeta} |\Std^\zX(\rev\balpha',\rev\bnu)|.
\end{align*}
Since $k_{\rev{\bla}', \rev{\bmu}}=|\Std^\zX(\rev{\bla}', \rev{\bmu})|$ it remains to prove that there is a bijection 
$$
\bigsqcup_{\bbeta\in\Xi_{r,i}}\bigsqcup_{\balpha\in \Om^{\bla}_\bbeta} \Std^\zX(\rev\balpha',\rev\bnu)
\iso 
\Std^\zX(\rev{\bla}', \rev{\bmu}). 
$$

Let $\bbeta\in\Xi_{r,i}$, $\balpha\in \Om^{\bla}_\bbeta$ and $\T\in \Std^\zX(\rev\balpha',\rev\bnu)$. By definition, $\bbeta$ is of the form $\bbeta_i(r,s)$. Moreover, the Young diagram $[\balpha]$ is obtained by removing $s$ nodes from distinct columns of the 
$(i-1)$st component 
$[\la^{(i-1)}]$ and $r-s$ nodes from distinct rows of the $i$th component $[\la^{(i)}]$ of $[\bla]$. Therefore $[\rev{\balpha}']$ is obtained by removing $s$ nodes $N_1,\dots,N_s$ from distinct rows of the $(\ell-i+1)$st component of $[\rev{\bla}']$ and $r-s$ nodes $M_1,\dots,M_{r-s}$ from distinct columns of the $(\ell-i)$th component of $[\rev{\bla}']$. Now extend $\T$ to the tableau $\hat\T\in \Std^\zX(\rev{\bla}', \rev{\bmu})$ by setting 
$$
\hat \T(N_1)=\dots=\hat \T(N_s)=t^{\za_{\ell - i, \ell - i + 1}}\qquad\text{and}\qquad 
\hat \T(M_1)=\dots=\hat \T(M_{r-s})=t^{\ze_{\ell -i}}.
$$
The tableaux $\hat\T$ is indeed standard since, by maximality of $i$ and $t$, we have $\T(N)<t^{\za_{\ell - i, \ell - i + 1}}$ for all $N$ in the $(\ell-i+1)$st component of $[\rev{\balpha}']$  
and $\T(N)<t^{\ze_{\ell -i}}$ for all $N$ in the $(\ell-i)$th component of $[\rev{\balpha}']$. The map $\T\mapsto \hat\T$ is clearly injective. To see that it is surjective, it suffices to show that for any $\Stab\in \Std^\zX(\rev{\bla}', \rev{\bmu})$ there exists $\bbeta\in\Xi_{r,i}$ and $\balpha\in\Om^\bla_\bbeta$ with  
$$
[\rev{\bla}']\setminus\{N\mid \Stab(N)\in\{t^{\ze_{\ell -i}},t^{\za_{\ell - i, \ell - i + 1}}\}\}=[\rev{\balpha}'].
$$

Indeed, there are exactly $\rev{\mu}^{(\ell-i)}_t=\mu^{(i)}_t=r$ nodes $N$ in the Young diagram $[\rev{\bla}']$ such that $\Stab(N)\in \{t^{\ze_{\ell -i}},t^{\za_{\ell - i, \ell - i + 1}}\}$. 
So for some $0\leq s\leq r$, we can write  
\begin{align*}
	\{N\in[\rev{\bla}']\mid  \Stab(N)=t^{\za_{\ell - i, \ell - i + 1}}\}&=\{N_1,\dots,N_s\},\\ 
	\{N\in[\rev{\bla}']\mid  \Stab(N)=t^{\ze_{\ell - i}}\}&=\{M_1,\dots,M_{r-s}\}. 
\end{align*}
By maximality of $i$ and $t$, we have that the 
nodes 
$N_1,\dots,N_s$ are in the ends of distinct rows of the $(\ell-i+1)$st component of $[\rev{\bla}']$ and the nodes $M_1,\dots,M_{r-s}$ are in the ends of distinct columns of the $(\ell-i)$th component of $[\rev{\bla}']$. It remains to note that removing these nodes produces a shape $[\rev{\balpha}']$ for $\balpha\in\Om^\bla_{\bbeta_i(r,s)}$. 
\end{proof}

\begin{Theorem}\label{TZigRingel}
Let $d\leq n$. We have $\End_{T^{\EZig}(n,d)} (\scrT)^\sop \cong T^{\EZig'}(n,d)$. In particular, $T^{\EZig}(n,d)$ is Ringel self-dual. 
\end{Theorem}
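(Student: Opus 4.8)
The plan is to upgrade the inclusion coming from the bimodule structure into an isomorphism by a dimension count. Precisely, the $(T^{\EZig}(n,d),T^{\EZig'}(n,d))$-bimodule structure on $\scrT$ gives a superalgebra homomorphism $T^{\EZig'}(n,d)\to\End_{T^{\EZig}(n,d)}(\scrT)^\sop$, $a'\mapsto(\zv\mapsto\zv a')$, which is injective because $\scrT$ is faithful as a right $T^{\EZig'}(n,d)$-supermodule by Corollary~\ref{Faith}. Since $\scrT$ is full tilting, the target is by definition a Ringel dual of $T^{\EZig}(n,d)$, so it will suffice to prove $\dim\End_{T^{\EZig}(n,d)}(\scrT)=\dim T^{\EZig'}(n,d)$. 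For the left-hand side I would use tilting theory: $\scrT$ has both a standard and a costandard filtration (Proposition~\ref{PFullTilt}), hence $\Ext^1_{T^{\EZig}(n,d)}$ between standards and costandards vanishes, and combined with (\ref{EDeMult}) this yields $\dim\End_{T^{\EZig}(n,d)}(\scrT)=\sum_{\bla\in\La^I_+(n,d)}(\scrT:\nabla(\bla))\,(\scrT:\De(\bla))$. Writing $\scrT$ as a sum of indecomposable tilting supermodules and using that these are $\tau_{n,d}$-self-dual (a consequence of (\ref{ETauDual})), one has $(\scrT:\nabla(\bla))=(\scrT:\De(\bla))$, so the sum is $\sum_{\bla}(\scrT:\De(\bla))^2$.

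Next I would compute $(\scrT:\De(\bla))$. Using the decomposition (\ref{ScrTDecomp}) and Proposition~\ref{GenKostka}, $(\scrT:\De(\bla))=\sum_{\bmu\in\La^I(n,d)}(\scrT^\bmu:\De(\bla))=\sum_{\bmu\in\La^I(n,d)}k_{\rev{\bla}',\rev{\bmu}}=\sum_{\bnu\in\La^I(n,d)}|\Std^\zX(\rev{\bla}',\bnu)|$, the last step using that $\bmu\mapsto\rev{\bmu}$ is a bijection of $\La^I(n,d)$. Since every colour in $\zX$ is left-absorbed by a unique standard idempotent (namely $\ze_i\mapsto\ze_i$ and $\za_{i-1,i}\mapsto\ze_{i-1}$), every standard $\zX$-coloured tableau has a unique left weight, so the sets $\Std^\zX(\rev{\bla}',\bnu)$ partition $\Std^\zX(\rev{\bla}')$ as $\bnu$ ranges over $\La^I(n,d)$; hence the sum equals $|\Std^\zX(\rev{\bla}')|=\dim\De(\rev{\bla}')$ by (\ref{EBasisDe}). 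Reindexing along the involution $\bla\mapsto\rev{\bla}'$ of $\La^I_+(n,d)$ — which is well defined because $d\leq n$ forces each conjugate $(\la^{(i)})'$ back into $\La_+(n)$ — gives $\dim\End_{T^{\EZig}(n,d)}(\scrT)=\sum_{\bla\in\La^I_+(n,d)}\big(\dim\De(\rev{\bla}')\big)^2=\sum_{\bla\in\La^I_+(n,d)}\big(\dim\De(\bla)\big)^2$.

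Finally I would match this with $\dim T^{\EZig'}(n,d)$. Being based quasi-hereditary with standard idempotents indexed by $\La^I_+(n,d)$, the superalgebra $T^{\EZig}(n,d)$ has $\dim T^{\EZig}(n,d)=\sum_{\bla}\dim\De(\bla)\cdot\dim\De^\op(\bla)$, and $\dim\De(\bla)=\dim\De^\op(\bla)$ since $\De(\bla)^{\tau_{n,d}}\simeq\nabla(\bla)=\De^\op(\bla)^*$ by (\ref{ETauDual}); thus $\dim T^{\EZig}(n,d)=\sum_{\bla}(\dim\De(\bla))^2$. On the other hand, Lemma~\ref{LZ'} gives a superalgebra isomorphism $\EZig\cong\EZig'$ carrying the calibrating subalgebra $\a$ onto $\a'$, i.e.\ an isomorphism of calibrated superalgebras; since $T^A(n,d)=\tilde\Ga^dM_n(A)$ depends only on the calibrated superalgebra $A$, this induces an isomorphism $T^{\EZig}(n,d)\cong T^{\EZig'}(n,d)$, so $\dim T^{\EZig'}(n,d)=\dim T^{\EZig}(n,d)=\sum_{\bla}(\dim\De(\bla))^2$. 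Comparing with the previous paragraph, the injection above is an isomorphism $\End_{T^{\EZig}(n,d)}(\scrT)^\sop\cong T^{\EZig'}(n,d)$, and since $T^{\EZig'}(n,d)\cong T^{\EZig}(n,d)$ this gives Ringel self-duality.

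The genuinely hard work is already in place: Proposition~\ref{GenKostka}, together with the tensor-product filtration result Theorem~\ref{DeChTens} that feeds it, is the heart of the argument, and everything above it is bookkeeping. The one delicate point in that bookkeeping is the super structure — one must be sure that $\dim\Hom_{T^{\EZig}(n,d)}(\De(\bla),\nabla(\bla))=1$ as a total super-dimension so that the Hom-dimension formula for tilting modules applies, and that the parity shifts $\Pi$ appearing in the self-duality of indecomposable tilting supermodules do not disturb the multiplicities $(\scrT:\De(\bla))$ and $(\scrT:\nabla(\bla))$; this is why the self-duality is used only at the level of those multiplicities rather than claiming $\scrT\simeq\scrT^{\tau_{n,d}}$ directly.
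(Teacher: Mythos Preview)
Your proof is correct and follows essentially the same route as the paper: embed $T^{\EZig'}(n,d)$ into $\End_{T^{\EZig}(n,d)}(\scrT)^\sop$ via Corollary~\ref{Faith}, then match dimensions using Proposition~\ref{GenKostka} and the self-duality coming from $\tau_{n,d}$. The only cosmetic differences are that the paper keeps the double sum $\sum_{\bla}\sum_{\bmu,\bnu}k_{\bla,\bmu}k_{\bla,\bnu}$ and cites \cite[Theorem~5.17]{KMgreen3} to identify it with $\dim T^{\EZig}(n,d)$, whereas you first sum over $\bmu$ to recognize $(\scrT:\De(\bla))=\dim\De(\rev{\bla}')$ and then derive $\dim T^{\EZig}(n,d)=\sum_\bla(\dim\De(\bla))^2$ directly from the heredity basis and (\ref{ETauDual}); and the paper uses the explicit self-duality of each $\scrT^\bmu$ (established in the proof of Proposition~\ref{TZTilt}) rather than self-duality of indecomposable tilting modules to get $(\scrT:\nabla(\bla))=(\scrT:\De(\bla))$.
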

\begin{proof}
In view of Lemma~\ref{LZ'}, we have that $T^\EZig(n,d)\cong T^{\EZig'}(n,d)$. In particular the second statement of the theorem follows from the first one. 

By Corollary~\ref{Faith}, $T^{\EZig'}(n,d)$ embeds into $\End_{T^{\EZig}(n,d)}(\scrT)^\sop$. So it suffices to show  that 
$\dim\End_{T^{\EZig}(n,d)}(\scrT)=\dim T^{\EZig}(n,d)$. 

In view of (\ref{ETauDual}), we have that each $\scrT^\bmu$ is $\tau_{n,d}$-self-dual and $(\scrT^\bmu : \De (\bla)) = (\scrT^\bmu : \nabla(\bla))$ for all $\bla \in \La^I_+(n,d)$. We now have: 
\begin{align*}
	\dim \End_{T^{\EZig}(n,d)}(\scrT) &= \sum_{\bmu, \bnu \in \La^I(n,d)} \dim \Hom_{T^\EZig(n,d)}(\scrT^\bmu, \scrT^\bnu)\\
	&= \sum_{\bla \in \La^I_+(n,d)}\sum_{\bmu, \bnu \in \La^I(n,d)} (\scrT^\bmu : \De(\bla)) (\scrT^\bnu: \nabla(\bla))\\
	&= \sum_{\bla \in \La^I_+(n,d)}\sum_{\bmu, \bnu \in \La^I(n,d)} (\scrT^\bmu : \De(\bla)) (\scrT^\bnu: \De(\bla))\\
	&= \sum_{\bla \in \La^I_+(n,d)}\sum_{\bmu, \bnu \in \La^I(n,d)} k_{\rev\bla', \rev\bmu} k_{\rev\bla', \rev\bnu}\\
	&= \sum_{\bla \in \La^I_+(n,d)}\sum_{\bmu, \bnu \in \La^I(n,d)} k_{\bla, \bmu} k_{\bla, \bnu}
	\\
	&=\dim T^{\EZig}(n,d),
\end{align*}
where we have used (\ref{ScrTDecomp}) for the first equality, \cite[Proposition A2.2]{DonkinQS} for the second equality, Proposition~\ref{GenKostka} for the fourth equality and 
\cite[Theorem 5.17]{KMgreen3} for the last equality. 
\end{proof}


\end{document}